\documentclass[pdflatex,sn-mathphys-num]{sn-jnl}

\usepackage{graphicx}%
\usepackage{multirow}%
\usepackage{amsmath, amsthm, amssymb,amsfonts}%
\usepackage{cleveref}
\usepackage{mathrsfs}%
\usepackage[title]{appendix}%
\usepackage{xcolor}%
\usepackage{textcomp}%
\usepackage{manyfoot}%
\usepackage{booktabs}%
\usepackage{algorithm}%
\usepackage{algorithmicx}%
\usepackage{algpseudocode}%
\usepackage{listings}%
\usepackage{tikz}
\usepackage{bookmark}
\usetikzlibrary{decorations.pathreplacing}
\usepackage{subcaption}
\usepackage[percent]{overpic}
\usepackage{comment}
\usepackage{mathtools}
\DeclarePairedDelimiter{\floor}{\lfloor}{\rfloor}
\usepackage{array,arydshln,makecell,mathtools}
\usepackage{enumitem}

\definecolor{fista}{RGB}{77, 190, 238}
\definecolor{itta}{RGB}{237, 177, 32}
\definecolor{mfista}{RGB}{0, 114, 189}
\definecolor{mitta}{RGB}{217, 83, 25}
\definecolor{npd}{RGB}{54, 200, 91}
\definecolor{mnpd}{RGB}{2, 140, 45}
\definecolor{pnpd}{RGB}{245, 105, 180}
\definecolor{mpnpd}{RGB}{228, 0, 124}

\theoremstyle{thmstyleone}%
\newtheorem{theorem}{Theorem}
\newtheorem{example}{Example}%
\newtheorem{remark}{Remark}%
\newtheorem{lemma}[theorem]{Lemma}
\newtheorem{proposition}[theorem]{Proposition}%
\newtheorem{definition}{Definition}%
\newtheorem{assumption}{Assumption}

\newcommand{\R}{\mathbb{R}}

\DeclareMathOperator*{\argmin}{argmin}
\DeclareMathOperator*{\argmax}{argmax}
\DeclareMathOperator{\relint}{relint}

\newcommand{\prox}{\operatorname{prox}}

\newcommand{\dom}{\operatorname{dom}}

\begin{document}

\title[Article Title]{Multilevel Preconditioning Strategies for Convex Optimization Methods in Image Deblurring}

\author[1]{\fnm{Stefano} \sur{Aleotti}}\email{stefano.aleotti@uninsubria.it}

\author[1]{\fnm{Claudia} \sur{Binda}}\email{cbinda2@uninsubria.it}

\author[1]{\fnm{Marco} \sur{Donatelli}}\email{marco.donatelli@uninsubria.it}

\author[2,3]{\fnm{Rolf} \sur{Krause}}\email{rolf.krause@kaust.edu.sa}

\affil[1]{\orgdiv{Department of Science and High Technology}, \orgname{University of Insubria}, \orgaddress{\street{Via Valleggio 11}, \city{Como}, \postcode{22100}, \country{Italy}}}

\affil[2]{\orgdiv{AMCS, CEMSE}, \orgname{King Abdullah University of Science and Technology}, \orgaddress{\city{Thuwal}, \postcode{23955-6900}, \country{Kingdom of Saudi Arabia}}}

\affil[3]{\orgdiv{Euler Institute and Faculty of Informatics}, \orgname{Università della Svizzera italiana}, \street{Via Giuseppe Buffi 10}, \city{Lugano}, \postcode{6900}, \country{Switzerland}}

\abstract{Proximal gradient methods are widely used in imaging, and their speed of convergence can be accelerated by incorporating variable metrics and/or extrapolation steps. Recent works have shown that preconditioning strategies can significantly enhance this acceleration, in particular, for image deblurring problems. In parallel, a multilevel framework has been introduced to speed up inertial and inexact forward–backward schemes for image restoration problems.\\
In this paper, we combine preconditioning and multilevel strategies to design a robust and consistent acceleration framework for both standard and inexact forward-backward schemes applied to regularized convex optimization problems. Numerical experiments in image deblurring confirm that our approach yields a substantial improvement
in convergence speed compared to standard methods.}

\keywords{Ill-posed problems, Multilevel convex optimization, Image deblurring, Preconditioning, Proximal gradient methods}

\maketitle

\section{Introduction}\label{sec1}
In this manuscript, we focus on image deblurring problems described by the model equation
\begin{equation}\label{eq:Model_Equation}
    Au = b^\delta,
\end{equation}
where \( A\in\mathbb{R}^{s\times d} \) represents the discretization of a space-invariant convolution operator, \( b^{\delta}\in\mathbb{R}^{s} \) denotes the observed image corrupted by white Gaussian noise $\eta_{\delta}$, and \( u\in\mathbb{R}^d \) is the unknown two-dimensional image with \( d \) pixels. We assume that \( b^{\delta} \) satisfies
\begin{equation}\label{eq:bdelta}
    b^{\delta} = b + \eta_{\delta}, \qquad \|b^{\delta} - b\| \leq \delta,
\end{equation}
where $b$ denotes the unobserved noise-free data, $\|\cdot\|$ is the Euclidean norm, and $\delta > 0$ is an upper bound on the noise level.

The ill-posedness of the operator \( A \), together with the presence of noise, requires the use of regularization in order to solve \eqref{eq:Model_Equation}; see \cite{HNO,Engl1996-yp,Bertero1998b}. Regularization based on a standard variational approach consists of solving the optimization problem
\begin{equation}\label{eq:problem_leastsquares}
     \underset{u\in\mathbb{R}^d}{\operatorname{argmin}} \ f(u) + g(Wu),
\end{equation}
where \( f:\mathbb{R}^d\rightarrow \mathbb{R} \) is convex and smooth, \( g:\mathbb{R}^{d'}\rightarrow \mathbb{R}\cup\{\infty\} \) is convex and possibly non-smooth, and \( W\in\mathbb{R}^{d'\times d} \) is a linear operator. 
Here, \( f \) denotes the \textit{data-fidelity} term, depending on $A$ and measuring the discrepancy between the observed data and the model, while the possibly non-differentiable term \( g \circ W = g(W \cdot)\) serves as a penalty that incorporates prior information about the reconstructed image. 
Traditionally, this penalty term is weighted by an explicit regularization parameter $\lambda>0$ to balance the two terms. 
In this paper, in agreement with the white Gaussian noise assumption on the observed data, we consider the particular instance of the model problem~\eqref{eq:problem_leastsquares} where
\begin{equation}\label{eq:fLS}
   f(u) = \frac{1}{2}\|Au - b^{\delta}\|^2. 
\end{equation}

A particularly effective way to compute approximate solutions of \eqref{eq:problem_leastsquares} is to employ proximal gradient methods \cite{FISTA,Combettes-Wajs-2005,Daubechies-et-al-2004}, which are first-order iterative algorithms. These methods typically achieve mild-to-moderate accuracy at a relatively low cost per iteration, especially when the dimension \( d \) is large. Their basic iteration consists of alternating a gradient step on the differentiable part \( f \) with a proximal step on the non-smooth term \( g\circ W \), namely
\begin{align}
    u^{k+1} & = \operatorname{prox}_{\alpha g\circ W}(u^k - \alpha \nabla f(u^k)),
    \label{eq:pg_method}
\end{align}
where $\alpha > 0$ is the step-length parameter along the descent direction \( -\nabla f(u^k) \), and \( \operatorname{prox}_{\alpha g\circ W} \) denotes the proximal operator associated with the non-smooth term \( \alpha g\circ W \). Convergence of \eqref{eq:pg_method} to a solution of \eqref{eq:problem_leastsquares} is guaranteed whenever \( f \) has an \( L \)-Lipschitz continuous gradient and \( \alpha \) is chosen smaller than \( 2/L \) (see, e.g., \cite{Combettes-Wajs-2005}).

However, proximal gradient methods suffer from two main drawbacks. The first is that scheme \eqref{eq:pg_method} assumes that \( \operatorname{prox}_{\alpha g \circ W} \) is available in closed form. This assumption can be restrictive in real-world applications, where it is often not available in closed form, as is the case, for example, of the Total Variation \cite{Bach-2012,Polson-et-al-2015}. To overcome this issue, splitting approaches such as primal-dual methods avoid the explicit computation of \( \operatorname{prox}_{\alpha g \circ W} \) by reformulating \eqref{eq:problem_leastsquares} as a convex-concave saddle-point problem \cite{Chambolle10,Malitsky-Pock-2018,Chambolle-et-al-2024}. Furthermore, if the convex conjugate \( g^*(v) = \sup_{w\in\mathbb{R}^{d'}}\langle v,w\rangle - g(w) \) admits an easily computable proximal operator, then the desired proximal approximation can be obtained by means of a \textit{primal-dual} inner routine \cite{Bonettini-et-al-2016a,Schmidt2011,Villa-etal-2013}, involving only the computation of \( \nabla f(u^k) \), \( \operatorname{prox}_{g^*} \), and matrix-vector products with $W$ and $W^T$ \cite{Bonettini2018a,Chen-et-al-2018,Villa-etal-2013}.

The second drawback is that, if the step length \( \alpha \) is chosen too small, convergence may become very slow, especially when only a rough estimate of the Lipschitz constant \( L \) is available. One possible remedy is to accelerate the method either by adopting a variable-metric strategy, which incorporates second-order information from the differentiable part \cite{Bonettini-et-al-2016a,Chouzenoux-etal-2014,Frankel-etal-2015,Ghanbari-2018,Lee2018}, or by introducing an extrapolation step that exploits information from previous iterations \cite{Beck-Teboulle-2009a,Ochs-etal-2014}. For instance, when \( g \) is the \( \ell^1 \)-norm, a well-known extrapolated proximal gradient method is the Fast Iterative Soft Thresholding Algorithm (FISTA) \cite{FISTA}.
Another strategy that has proven effective in accelerating convergence is the use of preconditioning techniques \cite{ITTA,PNPD,NPDIT}. In imaging problems, acceleration can often be achieved by preconditioning the linear system associated with \eqref{eq:Model_Equation} by means of a suitable operator $P$. In particular, when a left preconditioning approach is considered, the resulting method is also known as Iterated Tikhonov Thresholding Algorithm (ITTA). More recently, \cite{MM_FISTA} introduced a multilevel strategy as an alternative approach for accelerating proximal gradient methods. The main idea is to approximate the objective function by a sequence of lower-dimensional models defined on coarse scales, where descent steps can be computed at a lower cost and then transferred back to the fine level. These coarse corrections are incorporated into the fine-level descent step before the proximal operator is approximated to combine the benefits of both inertial and multilevel acceleration.

The combination of the two acceleration strategies based on extrapolation and preconditioning may lead to instability, particularly when the proximity operator is not available in closed form and must be approximated, see \cite{PNPD}. On the other hand, combining preconditioning with multilevel strategies can accelerate convergence while preserving the stability of the method \cite{buccini2020multigrid}.
Therefore, in this manuscript, we aim to further investigate acceleration techniques by combining the multilevel strategy introduced in \cite{MM_FISTA,IML_FISTA} with the preconditioning approach proposed in \cite{ITTA,PNPD}. 

Since our focus is on image deblurring problems, the variational problem \eqref{eq:problem_leastsquares} can be naturally defined also on coarser levels by exploiting numerical linear algebra tools to construct both the blurring operator $A$ and the preconditioner $P$ on the corresponding coarse grids. 
By choosing a suitable preconditioner $P$, we can eliminate the need for an extrapolation step not only at the finest level but also on the coarser ones. 
The use of the preconditioner also at the coarse levels allows us to compute more accurate coarse corrections with fewer iterations than in the standard multilevel strategy, where non-preconditioned methods are employed. As a consequence, the descent directions generated on the coarse grids are expected to be more effective once transferred back to the fine level. 
Thereby reducing both the number of iterations required for convergence and the overall computational time.

Several multilevel methods for image deblurring have previously been proposed in the literature \cite{donatelli2005multigrid,donatelli2006regularizing,morigi2008cascadic,chan2010multilevel,buccini2020multigrid}. Some of these methods share our choice of grid-transfer and coarse-level operators; however, the underlying iterative regularization schemes are not based on proximal gradient methods. In comparison with the two papers \cite{MM_FISTA,IML_FISTA} that inspired our work, we not only consider different proximal gradient methods, but also introduce a different correction criterion and an adaptive strategy for selecting the number of coarse-level iterations and multilevel cycles, thereby making the proposed multilevel framework fully automated.

The remainder of the manuscript is organized as follows. \Cref{sec:prel} reviews standard notions and results from convex analysis that will be used throughout the paper. \Cref{sec:prox_methods} presents the proximal gradient methods considered in this work, distinguishing between those for which the proximal operator is available in closed form and those in which an approximate proximal point is computed via a dual formulation. \Cref{sec:multilevel} introduces the multilevel framework and describes the strategies adopted in our approach. The convergence properties of the resulting methods are analyzed in \Cref{sec:conv}. Finally, \Cref{sec:exp_set,sec:num_res} are devoted to the experimental setup and the numerical results for image deblurring problems.


\section{Preliminaries}\label{sec:prel}
This section recalls some well-known definitions and properties of convex analysis that will be used throughout the paper. For a more in-depth discussion, we refer the reader to \cite{rockafellar1997convex}. Before that, we introduce some useful notations that will be consistently used.

Denoting by 
$\mathcal{S}_+(\R^d)$ the set of all $d\times d$ real symmetric positive definite matrices and given $\eta>0$, we define the subset $\mathcal{D}_{\eta}\subseteq \mathcal{S}_+(\R^{d})$ as containing all matrices in $\mathcal{S}_+(\R^{d})$ with eigenvalues greater than or equal to $\eta$. Moreover, given $M\in \mathcal{S}_+(\R^d)$, the norm induced by $M$ is defined as
$\|u\|_{M} = \sqrt{u^TMu}$, $\forall u\in\R^d$.
\subsection{Convex Analysis}
\begin{definition}[Relative interior]
    Let $\Omega\subseteq\mathbb{R}^{n}$ be a set. We define the \textit{relative interior} of $\Omega$ as
    \begin{equation*}
        \operatorname{relint}(\Omega) := \left\{u\in \Omega \ |\ \exists\ \epsilon > 0 \text{\quad s.t.\quad } B(u,\epsilon)\cap\operatorname{aff(\Omega)}\subset \Omega\right\},
    \end{equation*}
    where $B(u,\epsilon)$ is the ball centered in $u$ with radius $\epsilon$ while $\operatorname{aff}(\Omega)$ is the affine hull of $\Omega$.
\end{definition}

\begin{definition}[Effective domain]
    The \textit{effective domain} of a proper function $f\colon\mathbb{R}^d\to\mathbb{R}\cup\{\infty\}$ is the set
    \begin{equation*}
        \dom(f) := \left\{u\in\mathbb{R}^{d}:f(u)<\infty\right\}.
    \end{equation*}
    Note that since $f$ is proper, there exists $u\in\mathbb{R}^d$ such that $f(u)\ne+\infty$ and thus $\dom(f)\ne\emptyset$.
\end{definition}
\begin{definition}
    A function $f:\R^d\rightarrow \R$ is said to have an $L-$Lipschitz continuous gradient if the following property holds
    \begin{equation*}
        \|\nabla f(\tilde{u})-\nabla f(\bar{u})\|\leq L\|\tilde{u}-\bar{u}\|, \quad \forall \ \tilde{u},\bar{u}\in\R^d.
    \end{equation*}
\end{definition}

\begin{example}
A classic example of a function with $L-$Lipschitz continuous gradient is the least squares data-fidelity term \eqref{eq:fLS}.
Since
$
    \nabla f(u) = A^T(Au - b^\delta),
$
we have 
\begin{flalign*}
    \|\nabla f (\tilde{u}) - \nabla f(\bar{u})\| &= \|A^T(A\tilde{u} - b^\delta) - A^T(A\bar{u} - b^\delta)\|\leq \|A^T A\| \|\tilde{u} - \bar{u}\|,
\end{flalign*}
and, in particular, the Lipschitz constant of $\nabla f$ is $L = \|A^T A\|=\|A\|^2$.
\end{example}

\begin{definition}
    Let $\varphi:\R^d\rightarrow\R\cup\{\infty\}$ be a proper convex and lower semicontinuous function. The subdifferential of $\varphi$ at point $u\in\R^d$ is defined as the set
    \begin{equation*}
        \partial\varphi = \{w\in\R^d: \varphi(v)\ge\varphi(u) + \left \langle w , v-u\right \rangle, \quad \forall v\in\R^d\}.
    \end{equation*}
\end{definition}

\begin{lemma}\cite[Theorem 23.8, Theorem 23.9]{rockafellar1997convex} 
    Let $\varphi_1,\varphi_2 : \R^{d'}\rightarrow\R\cup\{\infty\}$ be proper, convex and lower semicontinuous functions and $W\in\R^{d'\times d}$.
    \begin{itemize}
        \item If $\varphi(v) = \varphi_1(v)+\varphi_2(v)$ and there exists $v_0\in\R^{d'}$ such that $v_0\in \operatorname{relint}(\operatorname{dom}(\varphi_1))\cap\operatorname{relint}(\operatorname{dom}(\varphi_2))$, then
              \begin{equation*}
                  \partial(\varphi_1+\varphi_2)(v) = \partial\varphi_1(v)+\partial\varphi_2(v), \quad \forall v\in\operatorname{dom}(\varphi).
              \end{equation*}
        \item If $\varphi = \varphi_1(Wu)$ and there exist $u_0\in\R^d$ such that $Wu_0\in\operatorname{relint}(\operatorname{dom}(\varphi_1))$, then
              \begin{equation*}
                  \partial\varphi(u) = W^T\partial\varphi_1(Wu) =\{W^Tv : v\in\partial\varphi_1(Wu)\}, \quad \forall u\in\operatorname{dom}(\varphi).
              \end{equation*}
    \end{itemize}
\end{lemma}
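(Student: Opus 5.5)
The plan is to prove both items by establishing the two inclusions separately. The inclusion ``$\supseteq$'' is elementary and holds without any qualification condition. The reverse inclusion ``$\subseteq$'' needs the relative-interior assumption, and I would obtain it from a separation argument or, more economically, from Fenchel duality.

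\emph{Sum rule.} For ``$\supseteq$'', take $w_1\in\partial\varphi_1(v)$ and $w_2\in\partial\varphi_2(v)$. Adding the two subgradient inequalities $\varphi_i(z)\ge\varphi_i(v)+\langle w_i,z-v\rangle$ gives $w_1+w_2\in\partial(\varphi_1+\varphi_2)(v)$.

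For ``$\subseteq$'', fix $w\in\partial\varphi(v)$. Then $v$ minimizes $z\mapsto \varphi_1(z)-\langle w,z\rangle+\varphi_2(z)$. I would apply the Fenchel duality theorem (Rockafellar, Thm.~31.1) to the pair $\psi_1=\varphi_1-\langle w,\cdot\rangle$ and $\psi_2=\varphi_2$. Its hypothesis is exactly $\relint\dom\psi_1\cap\relint\dom\psi_2\neq\emptyset$, which is given by $v_0$. The theorem yields a dual attainer $y$ such that $\inf(\psi_1+\psi_2)=-\psi_1^*(y)-\psi_2^*(-y)$. Combining this with the Fenchel--Young inequality at the minimizer $v$ forces equality in both Young inequalities. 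That gives $y\in\partial\psi_1(v)$, i.e.\ $y+w\in\partial\varphi_1(v)$, and $-y\in\partial\varphi_2(v)$. Summing, $w=(y+w)+(-y)\in\partial\varphi_1(v)+\partial\varphi_2(v)$.

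\emph{Chain rule.} For ``$\supseteq$'', take $v\in\partial\varphi_1(Wu)$. Then
\[
\varphi_1(Wz)\ge\varphi_1(Wu)+\langle v,W(z-u)\rangle=\varphi_1(Wu)+\langle W^Tv,z-u\rangle,
\]
so $W^Tv\in\partial\varphi(u)$.

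For ``$\subseteq$'', take $w\in\partial\varphi(u)$. Then $(u,Wu)$ minimizes $(z,y)\mapsto\varphi_1(y)-\langle w,z\rangle+\iota_{\{y=Wz\}}(z,y)$ over $\R^d\times\R^{d'}$. I would apply the sum rule just proved on the product space. The graph of $W$ is a subspace, so it equals its own relative interior. The point $(u_0,Wu_0)$ then lies in the relative interior of both domains, since $\relint(\R^d\times\dom\varphi_1)=\R^d\times\relint\dom\varphi_1$. The subdifferential of the indicator of the graph is its orthogonal complement, $\{(-W^Tp,p)\}$. The optimality condition therefore reads $(0,0)\in(-w,\partial\varphi_1(Wu))+\{(-W^Tp,p)\}$. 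This gives $v:=-p\in\partial\varphi_1(Wu)$ and $w=W^Tv$.

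\emph{Main obstacle.} The hard step is the ``$\subseteq$'' direction of the sum rule, which carries all the qualification-condition content. If I avoid quoting Fenchel duality, I would separate the two convex sets $\operatorname{epi}\psi_1$ and $\{(z,t):t\le-\psi_2(z)+\text{const}\}$ in $\R^{d'}\times\R$, using the proper separation theorem for relative interiors (Rockafellar, Thm.~11.3). I would then use $v_0$ to exclude a vertical separating hyperplane. That exclusion is exactly where the relative-interior hypothesis is needed.
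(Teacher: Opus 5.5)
Your argument is correct. The paper gives no proof of its own, only the citation to Rockafellar. There both items come from one mechanism. The first ingredient is the Fenchel--Young characterization $x^*\in\partial f(x)\iff f(x)+f^*(x^*)=\langle x,x^*\rangle$ (Thm.~23.5). The second is an exact conjugate formula valid under the relative-interior condition, with the infimum attained:
\begin{itemize}
\item for the sum, $(\varphi_1+\varphi_2)^*(w)=\inf\{\varphi_1^*(a)+\varphi_2^*(b): a+b=w\}$ (Thm.~16.4);
\item for the composition, $(\varphi_1\circ W)^*(x^*)=\inf\{\varphi_1^*(y): W^Ty=x^*\}$ (Thm.~16.3).
\end{itemize}
Splitting the resulting equality into nonnegative Young gaps then finishes each case.

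For the sum rule you use the same mechanism, packaged differently. Since $\psi_1^*(y)=\varphi_1^*(y+w)$, dual attainment in Fenchel duality for $(\psi_1,\psi_2)$ is exactly the attained infimal convolution of $\varphi_1^*$ and $\varphi_2^*$ at $w$.

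The real difference is the chain rule. Instead of a separate conjugate formula for $\varphi_1\circ W$, you lift to $\R^d\times\R^{d'}$. You apply the sum rule to $(z,y)\mapsto\varphi_1(y)-\langle w,z\rangle$ plus the indicator of the graph $G$ of $W$. You check the qualification through $\relint G=G$ and $\relint(\R^d\times\dom\varphi_1)=\R^d\times\relint\dom\varphi_1$. You then read off $w=W^Tv$ from $\partial\iota_G=G^\perp=\{(-W^Tp,p)\}$.

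Your route buys economy: a single qualification-dependent duality theorem carries all the weight, and the chain rule becomes a corollary of the sum rule. The price is some product-space bookkeeping (relative interior of a product, subdifferential of a separable function, normal cone of a subspace), all of which you handle correctly. Rockafellar's parallel treatment instead makes each rule an immediate consequence of its own conjugate formula, with no lifting.
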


\begin{definition}\cite[p. 104]{rockafellar1997convex}\label{def:convex_conj}
    Given a proper, convex, lower semicontinuous function $\varphi:\mathbb{R}^d\rightarrow \mathbb{R}\cup\{\infty\}$, the convex conjugate of $\varphi$ is the function
    \begin{equation*}
        \varphi^*:\R^d\rightarrow \R\cup\{\infty\}, \quad \varphi^*(w)=\underset{u\in\R^d}{\operatorname{\sup}}\ \langle w,u\rangle - \varphi(u), \ \forall \ w\in\R^d.
    \end{equation*}
\end{definition}
For the biconjugate function $(\varphi^*)^*$, the following well-known result holds.
\begin{theorem}\cite[Theorem 12.2]{rockafellar1997convex}\label{lem:biconjugate}
    Let $\varphi:\R^d\rightarrow \R\cup\{\infty\}$ be proper, convex, and lower semicontinuous. Then $\varphi^*$ is convex and lower semicontinuous, and $(\varphi^*)^*=\varphi$.
\end{theorem}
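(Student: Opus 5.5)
To prove the statement, I first show that $\varphi^*$ is convex and lower semicontinuous. By Definition~\ref{def:convex_conj}, $\varphi^*$ is a pointwise supremum of the affine functions $w\mapsto \langle w,u\rangle-\varphi(u)$, indexed by $u\in\dom(\varphi)$; points outside the domain contribute $-\infty$ and can be dropped. A supremum of affine, hence convex and continuous, functions has a convex and closed epigraph, since that epigraph is the intersection of the closed half-spaces given by the epigraphs of the individual affine functions. This settles the first claim. Because $\dom(\varphi)\neq\emptyset$, $\varphi^*$ never takes the value $-\infty$. Properness of $\varphi^*$ will follow from the affine minorant constructed below.

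For the identity $(\varphi^*)^*=\varphi$, one inequality is immediate. The Fenchel--Young inequality $\varphi^*(w)\ge\langle w,u\rangle-\varphi(u)$ holds for all $u,w$, so $\varphi(u)\ge \langle w,u\rangle-\varphi^*(w)$ for every $w$. Taking the supremum over $w$ gives $\varphi^{**}\le\varphi$.

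The main obstacle is the reverse inequality $\varphi^{**}\ge\varphi$, which relies on separation. I would use the following plan.
\begin{enumerate}
\item Since $\varphi$ is proper, convex and lower semicontinuous, $\operatorname{epi}\varphi\subset\R^d\times\R$ is a nonempty closed convex set.
\item Fix $\bar u$ and any $\beta<\varphi(\bar u)$. Then $(\bar u,\beta)\notin\operatorname{epi}\varphi$, so strict separation of a point from a closed convex set gives $(a,c)\in\R^d\times\R$ and $\gamma\in\R$ such that
\[
\langle a,u\rangle+ct<\gamma<\langle a,\bar u\rangle+c\beta \quad \text{for all }(u,t)\in\operatorname{epi}\varphi .
\]
Letting $t\to\infty$ forces $c\le 0$.
\item If $c<0$, normalize to $c=-1$. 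This yields an affine minorant $u\mapsto\langle a,u\rangle-\gamma$ of $\varphi$ with value greater than $\beta$ at $\bar u$. Its existence means $\varphi^*(a)\le\gamma$, hence $\varphi^{**}(\bar u)\ge\langle a,\bar u\rangle-\gamma>\beta$.
\item The delicate case is $c=0$, which can only happen when $\bar u\notin\dom\varphi$; there $\varphi(\bar u)=\infty$ and the separation is vertical. To handle it, I first apply step 3 at some point of $\dom\varphi$. This produces at least one affine minorant $\ell$ of $\varphi$, which also shows $\varphi^*$ is proper. I then add a large multiple of the vertical separating functional: $\ell(u)+\mu(\langle a,u\rangle-\gamma)$ is still a minorant of $\varphi$ on $\dom\varphi$, and for $\mu$ large its value at $\bar u$ exceeds $\beta$.
\end{enumerate}
In every case, each $\beta<\varphi(\bar u)$ satisfies $\beta<\varphi^{**}(\bar u)$. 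Therefore $\varphi^{**}\ge\varphi$, and combined with the first inequality this gives $\varphi^{**}=\varphi$.
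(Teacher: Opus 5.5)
Your proof is correct and complete. This includes the check that $\varphi^*$ is proper, which is needed for the paper's definition of the conjugate to apply to $\varphi^*$. The paper gives no argument of its own and instead cites Rockafellar, where Theorem 12.2 is deduced from Theorem 12.1 (a closed proper convex function is the pointwise supremum of its affine minorants) by essentially your route: separate points from the closed epigraph, and for vertical separating hyperplanes add a large multiple of the vertical functional to an existing affine minorant.
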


\begin{definition}\label{def:proxP}
    Let $\varphi\colon\mathbb{R}^d\rightarrow\R\cup\{+\infty\}$ be a proper, convex, lower semicontinuous function. The proximity operator associated to $\varphi$ in the metric induced by a symmetric positive definite matrix $P\in\mathcal{D}_{\eta}$ with parameter $\alpha>0$ is the map $\mathrm{prox}_{\alpha \varphi}:\mathbb{R}^d\rightarrow\mathbb{R}^d$ defined as
    $$
        \mathrm{prox}_{\alpha \varphi}^{P}(a) = \arg\min_{u\in\mathbb{R}^d}\varphi(u)+\frac{1}{2\alpha}\|u-a\|^2_P, \quad \forall \ a\in\mathbb{R}^d.
    $$
\end{definition}
\begin{remark}
When $P=I_n$, we write $\prox_{\alpha\varphi}^{I_n}=\prox_{\alpha\varphi}$.
\end{remark}
\begin{proposition}[Moreau decomposition]
Let $\varphi\colon\mathbb{R}^d \to \mathbb{R}\cup\{\infty\}$ be a proper, convex, lower semicontinuous function, and
let $\varphi^{\ast}:\mathbb{R}^d \to \mathbb{R}\cup\{\infty\}$ be its conjugate.
Given $\alpha \in \mathbb{R}_{++}$,
and $P\in\mathcal{S}_+(\mathbb{R}^d)$, then the following identity holds
\begin{equation}\label{prep:Moreau_dec}
    \operatorname{prox}_{\alpha \varphi}^{P}(u)+\alpha P^{-1}\,
\operatorname{prox}_{\alpha^{-1} \varphi^{\ast}}^{P^{-1}}
\bigl(\alpha^{-1} P u\bigr)=u,
\qquad \forall u \in \mathbb{R}^d.
\end{equation}
\end{proposition}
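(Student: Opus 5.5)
The plan is to reduce the identity to the optimality conditions of the two proximal problems and to connect them through the Fenchel--Young equality. Fix $u\in\R^d$ and set $p=\prox_{\alpha\varphi}^{P}(u)$. Since $\varphi$ is proper, convex and lower semicontinuous and $\frac{1}{2\alpha}\|\cdot-u\|_P^2$ is strongly convex, the minimizer in Definition~\ref{def:proxP} exists and is unique. Its optimality condition, obtained from the sum rule of the subdifferential lemma (the quadratic is finite everywhere, so the relative interior condition holds), reads
\begin{equation*}
0\in\partial\varphi(p)+\tfrac{1}{\alpha}P(p-u), \qquad\text{that is}\qquad w:=\tfrac{1}{\alpha}P(u-p)\in\partial\varphi(p).
\end{equation*}

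Next I show that $w=\prox_{\alpha^{-1}\varphi^*}^{P^{-1}}(\alpha^{-1}Pu)$. Once this is established, multiplying by $\alpha P^{-1}$ gives $\alpha P^{-1}w=u-p$, which is exactly \eqref{prep:Moreau_dec}. By the same argument as above, the point $w$ is this proximal point if and only if
\begin{equation*}
0\in\partial\varphi^*(w)+\alpha P^{-1}\bigl(w-\alpha^{-1}Pu\bigr)=\partial\varphi^*(w)+\alpha P^{-1}w-u.
\end{equation*}
Here $\varphi^*$ is proper, convex and lower semicontinuous by Theorem~\ref{lem:biconjugate}, and the weight $1/(2\alpha^{-1})=\alpha/2$ multiplies the $P^{-1}$-norm. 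By the definition of $w$ we have $u-\alpha P^{-1}w=p$, so the condition reduces to $p\in\partial\varphi^*(w)$.

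The main step is therefore the subgradient inversion $w\in\partial\varphi(p)\Rightarrow p\in\partial\varphi^*(w)$. I will prove it directly from Definition~\ref{def:convex_conj}.

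First, $w\in\partial\varphi(p)$ means $\langle w,v\rangle-\varphi(v)\le\langle w,p\rangle-\varphi(p)$ for all $v$. Hence $\varphi^*(w)=\langle w,p\rangle-\varphi(p)$, the Fenchel--Young equality.

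Second, for any $z$ we have $\varphi^*(z)\ge\langle z,p\rangle-\varphi(p)$. Subtracting the equality gives $\varphi^*(z)\ge\varphi^*(w)+\langle p,z-w\rangle$, which says $p\in\partial\varphi^*(w)$.

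Only this direction is needed, so the biconjugate theorem enters only to guarantee that $\varphi^*$ is proper, convex and lower semicontinuous, so that its proximal operator is well defined and single-valued. Finally, since both proximal problems are strongly convex, the optimality conditions are sufficient as well as necessary. This uniqueness is what lets us identify $w$ with the proximal point and conclude.
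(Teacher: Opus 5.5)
Your proof is correct and complete. The paper states this proposition without proof, as a known preliminary, so there is no argument in the text to compare yours with.

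What you give is the standard direct verification:
\begin{itemize}
\item $p=\prox^{P}_{\alpha\varphi}(u)$ is characterized by $w=\alpha^{-1}P(u-p)\in\partial\varphi(p)$;
\item the Fenchel--Young equality turns this into $p\in\partial\varphi^*(w)$;
\item the identity $u-\alpha P^{-1}w=p$ then shows that $w$ satisfies the optimality condition of the problem defining $\prox^{P^{-1}}_{\alpha^{-1}\varphi^*}(\alpha^{-1}Pu)$.
\end{itemize}

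Two small precisions. First, on the dual side you only use the easy inclusion $\partial\varphi^*(w)+\alpha P^{-1}w-u\subseteq\partial\bigl(\varphi^*+\tfrac{\alpha}{2}\|\cdot-\alpha^{-1}Pu\|_{P^{-1}}^2\bigr)(w)$. So sufficiency of the optimality condition comes from convexity alone and needs no qualification condition; strong convexity is used only for uniqueness. Second, properness of $\varphi^*$ is not literally contained in the paper's version of the biconjugate theorem, but you do not need it from there. Your Fenchel--Young step shows $\varphi^*(w)<\infty$, and $\varphi^*>-\infty$ everywhere because $\dom\varphi\neq\emptyset$.

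The subgradient inversion you prove by hand is exactly the equivalence of items 2--4 in the Moreau lemma stated right after the proposition, so you could cite it instead.

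An alternative route is to reduce to the Euclidean decomposition via the change of variables $y=P^{1/2}x$, $\psi=\varphi\circ P^{-1/2}$, $\psi^*=\varphi^*\circ P^{1/2}$. Your direct argument avoids matrix square roots and the conjugate-of-composition computation at no extra cost.
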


\begin{lemma}\cite[Section 11--12]{Moreau_fr}
    Let $\varphi:\R^d\rightarrow\R\cup\{\infty\}$ be proper, convex, and lower semicontinuous. For all $\alpha,\beta>0$, the following statements are equivalent:
    \begin{enumerate}
        \item u = $\prox_{\alpha\varphi}(u+\alpha w)$;
        \item $w\in\partial\varphi(u)$;
        \item $\varphi(u)+\varphi^*(w) = \left \langle w, u\right \rangle$;
        \item $u\in\partial\varphi^*(w)$;
        \item $w = \prox_{\beta\varphi^*}(\beta u+w)$.
    \end{enumerate}
\end{lemma}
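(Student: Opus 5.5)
The plan is to prove the chain of equivalences by going through the optimality condition of the proximal subproblem and the Fenchel--Young inequality, using \Cref{lem:biconjugate} to move between $\varphi$ and $\varphi^*$. I will establish (1)$\Leftrightarrow$(2), then (2)$\Leftrightarrow$(3), then (3)$\Leftrightarrow$(4), and finally (4)$\Leftrightarrow$(5). The last step is obtained from the first by symmetry, with $\varphi^*$ and $\beta$ in place of $\varphi$ and $\alpha$.

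For (1)$\Leftrightarrow$(2), I use the fact that $\prox_{\alpha\varphi}(a)$ is the unique minimizer of the strongly convex function $v\mapsto \varphi(v)+\frac{1}{2\alpha}\|v-a\|^2$. The quadratic term is finite everywhere, so the sum rule of the subdifferential lemma applies, and Fermat's rule gives the characterization
\[
u=\prox_{\alpha\varphi}(a)\iff 0\in\partial\varphi(u)+\tfrac{1}{\alpha}(u-a).
\]
Plugging in $a=u+\alpha w$ turns this into $0\in\partial\varphi(u)-w$, that is, $w\in\partial\varphi(u)$.

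For (2)$\Leftrightarrow$(3), I first note that the definition of $\varphi^*$ gives the Fenchel--Young inequality $\varphi(v)+\varphi^*(w)\ge\langle w,v\rangle$ for all $v$. I then rewrite $w\in\partial\varphi(u)$ as $\langle w,u\rangle-\varphi(u)\ge \langle w,v\rangle-\varphi(v)$ for all $v$. Equivalently, $u$ attains the supremum defining $\varphi^*(w)$, so $\varphi^*(w)=\langle w,u\rangle-\varphi(u)$, which is (3). Reading the argument backwards gives the converse. The only care needed is finiteness: (2) forces $u\in\dom\varphi$, and (3) forces both $\varphi(u)$ and $\varphi^*(w)$ to be finite, since $\varphi$ is proper and $\varphi^*>-\infty$.

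For (3)$\Leftrightarrow$(4), I apply the (2)$\Leftrightarrow$(3) equivalence to $\varphi^*$, which is proper, convex and lower semicontinuous. This gives that $u\in\partial\varphi^*(w)$ is equivalent to $\varphi^*(w)+\varphi^{**}(u)=\langle u,w\rangle$. By \Cref{lem:biconjugate}, $\varphi^{**}=\varphi$, so this is exactly (3). Properness of $\varphi^*$ follows from properness of $\varphi$ together with $\varphi^{**}=\varphi$.

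For (4)$\Leftrightarrow$(5), I apply (1)$\Leftrightarrow$(2) to $\varphi^*$ with parameter $\beta$ and the pair $(w,u)$. The condition $u\in\partial\varphi^*(w)$ is then equivalent to $w=\prox_{\beta\varphi^*}(w+\beta u)$, which is (5).

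The main obstacle is the justification of the sum rule in the first step, together with checking that $\varphi^*$ is proper so that the argument can be reused for it. Both are mild. The quadratic term has full domain, so the relative-interior condition holds trivially. Properness of $\varphi^*$ follows from the existence of an affine minorant of $\varphi$, or directly from $\varphi^{**}=\varphi$ being proper.
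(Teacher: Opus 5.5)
Your proof is correct. The paper gives no proof of this lemma; it only cites Moreau. Your chain is a self-contained alternative built from the tools recalled in \Cref{sec:prel}:
(1)$\Leftrightarrow$(2) by Fermat's rule plus the sum rule; (2)$\Leftrightarrow$(3) by Fenchel--Young; (3)$\Leftrightarrow$(4) by \Cref{lem:biconjugate}; and (4)$\Leftrightarrow$(5) by reapplying the first step to $\varphi^*$ with parameter $\beta$.

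Your care with finiteness is exactly what is needed: (2) forces $\varphi(u)<\infty$, and (3) forces both $\varphi(u)$ and $\varphi^*(w)$ to be finite. So is your care with properness of $\varphi^*$. The paper's version of \Cref{lem:biconjugate} omits properness, and you correctly derive it from $\varphi^{**}=\varphi$. The only point you gloss is in the sum rule. The relative-interior condition also needs $\relint(\dom\varphi)\neq\emptyset$, which holds because $\dom\varphi$ is nonempty and convex. Alternatively, the quadratic term is differentiable everywhere, so no qualification is really needed.

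What your route gives beyond the citation is transparency about the parameters. You show that (1) and (5) are each equivalent to a parameter-free condition, which is exactly why $\alpha$ and $\beta$ can be chosen independently. A proof built on the Moreau decomposition \eqref{prep:Moreau_dec} (with $P=I$ and $z=u+\alpha w$) would link (1) and (5) directly, but only for the coupled choice $\beta=\alpha^{-1}$. Even that proof would therefore need your subdifferential characterization to reach arbitrary $\beta$.
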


\begin{definition}{\emph{(Smoothed convex function)}} \label{def:smooth_convex_func}
Let $R$ be a convex, lower semi-continuous, and proper function on $\mathbb{R}^{d}$. For every $\gamma>0$, a continuously differentiable function $R_{\gamma}$ is called a smoothed convex approximation of $R$ if there exist two finite-valued scalars $\eta_1$ and $\eta_2$ satisfying $\eta_1+\eta_2 >0$ such that it holds
\begin{equation}
    \forall u \in \mathbb{R}^{d} \qquad R(u) - \eta_1 \gamma \leq R_{\gamma}(u) \leq R(u) + \eta_2 \gamma.
\end{equation}
\end{definition}
A classical and widely adopted approach to constructing such a smoothed approximation is via the Moreau envelope, which we define next.
\begin{definition}[Moreau envelope]\label{def: moreau}
Let $\gamma > 0$ and $\varphi\colon  \mathbb{R}^d \to \mathbb{R}\cup\{\infty\}$ be a convex, lower semi-continuous, and proper function. The Moreau envelope of $\varphi$, denoted by $^\gamma \varphi$, is the convex, continuous, real-valued function defined as
\begin{equation}
^\gamma \varphi(u) = \inf_{v \in \mathbb{R}^d} \left\{ \varphi(v) + \frac{1}{2\gamma} \|u - v\|^2 \right\}.
\end{equation}
\end{definition}

The Moreau envelope $^\gamma \varphi$ is strictly related to $\mathrm{prox}_{\gamma \varphi}$ by
\begin{equation}
^\gamma \varphi(u) = \varphi(\mathrm{prox}_{\gamma \varphi}(u)) + \frac{1}{2\gamma} \|u - \mathrm{prox}_{\gamma \varphi}(u)\|^2.
\end{equation}
Moreover, $^\gamma \varphi$ is Fr\'echet differentiable on $\mathbb{R}^d$, and its gradient is $\gamma^{-1}$-Lipschitz continuous and it holds that 

\begin{equation}\label{eq: grad_smooth}
\nabla (^\gamma \varphi) = \gamma^{-1} \big(\mathrm{Id} - \mathrm{prox}_{\gamma \varphi}\big).
\end{equation}
In light of these regularizing properties, we will employ the Moreau envelope as our standard smoothed convex approximation throughout this work.

From now on, we consider the optimization problem \eqref{eq:problem_leastsquares} under the following 
\begin{assumption}
    \label{ass: standard}
    \quad
    \begin{enumerate}
        \item $f: \label{hyp:f} \mathbb{R}^d \to \mathbb{R}$ is convex and differentiable with $L$-Lipschitz continuous gradient;
        \item \label{hyp: standard_h} $g: \mathbb{R}^{d'} \to \mathbb{R}\cup\{\infty\}$ is a proper convex lower semicontinuous function;
        \item \label{hyp: standard_W} $W\in \mathbb{R}^{d'\times d}$ and exists $u_0$ such that $Wu_0 \in \relint(\dom(g))$;
        \item Problem in equation \eqref{eq:problem_leastsquares} has at least one solution.
    \end{enumerate}
\end{assumption}

We remark that the function $f(u)$ in \eqref{eq:fLS} satisfies Assumption~\ref{ass: standard}.\ref{hyp:f}. Moreover, Assumption~\ref{ass: standard}.\ref{hyp: standard_W} on $W$ is required to guarantee that the subdifferential rule $\partial (g \circ W)(u) = W^T \partial g(Wu)$ holds.

\section{Proximal gradient methods}\label{sec:prox_methods}
In this section, we review different proximal gradient or forward-backward methods that can effectively exploit the structure of the variational problem \eqref{eq:problem_leastsquares}.  
These are first-order iterative methods that alternate, at each iteration, a forward gradient step on the differentiable part $f$, followed by a backward proximal step on the convex, non-smooth term $g\circ W$. The general iterative scheme of these methods is given by
\begin{equation}\label{eq:fbit}
    u^{k+1}= \prox_{\alpha_kg\circ W}(u^k-\alpha_k\nabla f(u^k)), \qquad k=1,2,\ldots,
\end{equation}
where $\alpha_k\in\R_{++}$ is the steplength parameter.
In the first part of this section, we assume that the proximal operator of $g\circ W$ is available in closed form. In contrast, the second part is devoted to numerical methods that address the inexact computation of the proximal operator.

\subsection{Exact proximal gradient methods}
A particular instance of proximal gradient methods appears when we consider $g(u) = \lambda \|u\|_1$, $\lambda >0$, and $W$ is a semiorthogonal linear operator, i.e., $W^TW=I$, for instance, a framelet operator (see Appendix~\ref{app:framelets}). In this case, the proximity operator of $\lambda \|\cdot\|_1\circ W$ is the combination of the soft-thresholding operator with the linear operator $W$, namely
\begin{equation}
    \prox_{\lambda \|\cdot\|_1\circ W}(u)=W^T\mathcal{S}_{\lambda}(Wu),
\end{equation}
where $(\mathcal{S}_{\lambda}(u))_i \coloneqq \operatorname{sign}(u_i)\max\{|u_i|-\lambda,0\}$, for $i=1,\ldots d$, is the soft--thresholding operator. When we apply the proximal gradient method to this specific case, we obtain the so-called \textit{Iterative Soft--Thresholding Algorithm} (ISTA) \cite{ISTA}. 

\paragraph{FISTA}
Due to the slow convergence rate typical of first-order methods, a Nesterov-type acceleration strategy can be incorporated into the iterations. This consists of adding a preliminary step, called the extrapolation step, to the iterative scheme under consideration. The resulting \textit{Fast ISTA} (FISTA) \cite{FISTA} is described in \Cref{alg:fista}.

\begin{algorithm}[H]
\caption{FISTA} \label{alg:fista}
\begin{algorithmic}[1]
\State Choose \( u^0 \in \mathbb{R}^d \), \( 0<\alpha<\frac{2}{L} \), \( {\mathrm{maxit}} \in \mathbb{N} \), $W\in \mathbb{R}^{d'\times d}$.
\State $t_0 = 1$, $u^{-1} = u^{0}$
\For{$k=1,\dots,$ maxit}
    \State $t_{k+1} = \dfrac{1 + \sqrt{1 + 4 t_k^2}}{2}$
    \State $y^{k} = u^{k} + \dfrac{t_k - 1}{t_{k+1}} (u^{k} - u^{k-1}) $
    \State $u^{k+1} = W^T\mathcal{S}_{\alpha\lambda} \big(W(y^k - \alpha \nabla f(y^k))\big)$ \label{step: FISTA_eval}
\EndFor
\end{algorithmic}
\end{algorithm}

\paragraph{ITTA}
A different approach to overcoming the slow convergence of proximal gradient methods relies on introducing a suitable preconditioner 
$P\in\mathcal{S}_+(\mathbb{R}^d)$. 
This strategy is inspired by the iterated Tikhonov method \cite{engl1996regularization}, which can also be interpreted as a preconditioned gradient descent method and is therefore referred to as the \textit{Iterated Tikhonov Thresholding Algorithm} (ITTA) in \cite{ITTA}.

Similarly to equation \eqref{eq:problem_leastsquares}, 
given $S\in \mathcal{S}_+(\mathbb{R}^s)$, we consider the optimization problem
\begin{align}
    \label{eq:model_PNPD}
    \begin{split}
        \argmin_{u\in\mathbb{R}^d} f_S(u) + g(Wu) = \argmin_{u\in\mathbb{R}^d}\frac{1}{2} \|Au-b^{\delta}\|_{S^{-1}}^2 +g(Wu).
    \end{split}
\end{align}
In the data fidelity term
\begin{equation}\label{eq:fS}
    f_S(u) = \frac{1}{2}\|S^{-\frac{1}{2}}(Au-b^{\delta})\|^2,
\end{equation}
the linear operator $S^{\frac{1}{2}}$ can be interpreted as a left preconditioner for the linear system~\eqref{eq:Model_Equation}. If we further assume that there exists $P\in \mathcal{S}_+(\R^d)$ such that
\begin{equation}\label{eq:commutative_PNPD}
    P^{-1}A^T = A^T S^{-1},
\end{equation}
then it holds
\begin{equation}\label{eq:GfS}
    \nabla f_S(u) = A^TS^{-1}(Au-b^{\delta}) = P^{-1}\nabla f(u),
\end{equation}
where $f(u)$ is defined as in equation \eqref{eq:fLS}.

A possible choice for the preconditioner operator $P$  and the associated matrix $S$ that satisfies the condition \eqref{eq:commutative_PNPD}, is
\begin{equation}
    \label{eq:pnpd_P_numerical}
    P = A^T A + \nu I, \qquad S = A A^T + \nu I,
\end{equation}
with $\nu > 0$, as proposed in \cite{PNPD}. This choice is inspired by the iterated Tikhonov method \cite{engl1996regularization}, which coincides with the Levenberg–Marquardt method applied to the minimization of $f$. Indeed, $P$ is a regularized approximation of the Hessian of $f$ in $\mathcal{S}_+(\mathbb{R}^d)$. 

Applying the forward-backward method \eqref{eq:fbit} to \eqref{eq:model_PNPD}, where $f$ is replaced by $f_S$, and choosing $P$ as in \eqref{eq:pnpd_P_numerical}, such that we can fix $\alpha_k = 1$, the corresponding forward-backward method can be summarized as in \Cref{alg:itta}.

\begin{algorithm}[H]
\caption{ITTA} \label{alg:itta}
\begin{algorithmic}[1]
\State Choose \( u^0 \in \mathbb{R}^d \), \( {\mathrm{maxit}} \in \mathbb{N} \), $W\in \mathbb{R}^{d'\times d}$. 
\For{$k=1,\dots,$ maxit}
    \State $u^{k+1} = W^T\mathcal{S}_{\lambda} \big(W(u^k - P^{-1}\nabla f(u^k) )\big)$ \label{step: ITTA_eval}
\EndFor
\end{algorithmic}
\end{algorithm}
\subsection{Inexact proximal gradient methods}\label{sec:inex_prox_methods}
The availability of a closed form for the proximity operator of \( g\circ W \) is a strong assumption that is not satisfied by many regularization terms. In what follows, we describe a possible remedy for this issue by introducing a primal-dual strategy. Furthermore, we briefly present a preconditioned iteration, which is a generalization of the ITTA scheme within a broader framework.

\paragraph{NPD}
Several iterative methods based on an inexact inertial forward–backward splitting scheme have been proposed in the literature, for instance in \cite{Schmidt2011, Villa-etal-2013, Bonettini2018a, NPD}. In this work, we focus on the \textit{Nested Primal--Dual} (NPD) method proposed in \cite{NPD}, in which a nested iterative procedure is used to approximate the proximity operator of $g \circ W$. Its iterative scheme can be summarized as 
\begin{align}
    \label{eq:NPD}
    \begin{cases}
        \bar{u}^k = u^k + \gamma_k (u^k - u^{k-1}), \\
        u^{k+1} \approx \prox_{\alpha_k g\circ W}(\bar{u}^k - \alpha_k \nabla f(\bar{u}^k)),
    \end{cases}
\end{align}
where \( \approx \) indicates an approximation of the proximal-gradient point. Here, \( \bar{u}^k \) is usually referred to as the inertial point and \( \gamma_k \in [0,1] \) is the inertial parameter.
Let  \( L \) be the Lipschitz constant of \( \nabla f \), then the step length $\alpha_k$ can be chosen stationary as $\alpha \in (0, 2/L)$.

We can compute a fairly accurate approximation of the proximity operator in \eqref{eq:NPD} by exploiting the following

\begin{lemma}\cite[Theorem 1]{NPD}\label{lemma:primal_dual_NPD}
    Suppose that \( g: \mathbb{R}^{d'} \rightarrow \mathbb{R} \cup \{\infty\} \) and \( W \in \mathbb{R}^{d' \times d} \) satisfy Assumption~\ref{ass: standard} items 2. and 3. Given \( a \in \mathbb{R}^d \), \( \alpha > 0 \), \( 0 < \beta < 2/\|W\|^2 \), and \( v^0 \in \mathbb{R}^{d'} \), define the sequence
    \begin{equation}\label{eq:dual_seq_NPD}
        v^{k+1} = \prox_{\beta \alpha^{-1}g^*}(v^k+\beta\alpha^{-1}W(a-\alpha W^Tv^k)), \quad \forall k \geq 0.
    \end{equation}
    Then there exists \(\hat{v}\in\mathbb{R}^{d'}\) such that:
    \begin{itemize}
        \item[(i)] \(\lim_{k\rightarrow\infty}v^{k} = \hat{v}\);
        \item[(ii)] \(\prox_{\alpha g\circ W}(a) = a - \alpha W^T\hat{v}.\)
    \end{itemize}
\end{lemma}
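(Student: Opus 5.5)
The plan is to read the prox evaluation as a dual problem and show that \eqref{eq:dual_seq_NPD} is a forward–backward iteration on that dual problem, with a step size that guarantees convergence.

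\textbf{Dual problem.} By definition, $\prox_{\alpha g\circ W}(a)=\argmin_u \ g(Wu)+\frac{1}{2\alpha}\|u-a\|^2$. Using \Cref{lem:biconjugate}, write $g(Wu)=\sup_v \langle v,Wu\rangle - g^*(v)$. This turns the prox problem into the saddle-point problem
\[
\min_u\max_v\ \langle Wu,v\rangle - g^*(v)+\tfrac{1}{2\alpha}\|u-a\|^2 .
\]
For fixed $v$, the inner minimisation in $u$ is explicit: $u(v)=a-\alpha W^Tv$. Substituting back gives the dual problem
\[
\min_v \Phi(v)+g^*(v),\qquad \Phi(v)=\tfrac{\alpha}{2}\|W^Tv\|^2-\langle Wa,v\rangle .
\]
Here $\Phi$ is convex and differentiable, with $\nabla\Phi(v)=-W(a-\alpha W^Tv)$ and Lipschitz constant $\alpha\|W\|^2$.

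\textbf{Forward–backward form.} One forward–backward step with step size $\tau$ reads $v^{k+1}=\prox_{\tau g^*}(v^k-\tau\nabla\Phi(v^k))$. Taking $\tau=\beta\alpha^{-1}$ reproduces \eqref{eq:dual_seq_NPD} exactly. The convergence condition $\tau<2/(\alpha\|W\|^2)$ is then equivalent to $\beta<2/\|W\|^2$, which is precisely the hypothesis of the lemma.

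\textbf{Convergence, item (i).} The classical forward–backward convergence result already cited for \eqref{eq:pg_method} (Combettes–Wajs) gives convergence of $v^k$ to some minimiser $\hat v$ of $\Phi+g^*$, provided a minimiser exists. I expect existence to be the main obstacle, since $g^*$ need not be coercive. I would obtain it from Fenchel–Rockafellar duality. The primal problem is strongly convex, so it has a unique solution $\hat u$. The qualification condition $Wu_0\in\relint(\dom g)$ from Assumption~\ref{ass: standard}.\ref{hyp: standard_W} yields the chain rule $\partial(g\circ W)(\hat u)=W^T\partial g(W\hat u)$. The optimality condition $0\in W^T\partial g(W\hat u)+\alpha^{-1}(\hat u-a)$ then gives some $\hat v\in\partial g(W\hat u)$ with $\hat u=a-\alpha W^T\hat v$.

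It remains to check that this $\hat v$ is a dual minimiser. By the Moreau lemma (equivalence 2$\Leftrightarrow$4), $\hat v\in\partial g(W\hat u)$ is the same as $W\hat u\in\partial g^*(\hat v)$. Since $W\hat u=W(a-\alpha W^T\hat v)=-\nabla\Phi(\hat v)$, this reads $0\in\nabla\Phi(\hat v)+\partial g^*(\hat v)$. So the dual solution set is nonempty and item (i) follows.

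\textbf{Item (ii).} Let $\hat v$ now denote the actual limit of the sequence. Passing to the limit in \eqref{eq:dual_seq_NPD}, using continuity of the prox, shows $\hat v$ is a fixed point. By equivalence 5$\Leftrightarrow$4 of the Moreau lemma, this means $W(a-\alpha W^T\hat v)\in\partial g^*(\hat v)$. By 4$\Leftrightarrow$2, that is $\hat v\in\partial g(W\tilde u)$ with $\tilde u=a-\alpha W^T\hat v$. Consequently
\[
0=\alpha^{-1}(\tilde u-a)+W^T\hat v\in\alpha^{-1}(\tilde u-a)+\partial(g\circ W)(\tilde u),
\]
so $\tilde u$ satisfies the primal optimality condition. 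By uniqueness of the primal solution, $\tilde u=\prox_{\alpha g\circ W}(a)$, which proves (ii).
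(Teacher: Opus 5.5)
Your proof is correct and takes essentially the same route as the paper. The paper defers to \cite[Theorem 1]{NPD}, but in Section~\ref{sec:conv} it rewrites \eqref{eq:dual_seq_NPD} as a forward--backward step on the dual problem \eqref{eq:dual_prox}, with step $\beta\alpha^{-1}$ and the same condition $\beta<2/\|W\|^2$; your existence of a dual minimiser (from the qualification condition $Wu_0\in\relint(\dom g)$ and the chain rule) and your fixed-point identification of the limit supply the details the paper leaves to the cited reference.
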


Note that by combining items $(i)$ and $(ii)$ of Lemma~\ref{lemma:primal_dual_NPD} with equation \eqref{eq:dual_seq_NPD}, it holds
\begin{equation*}
    \prox_{\alpha g\circ W}(a) = a-\alpha W^T\lim_{k\rightarrow\infty}\prox_{\beta \alpha^{-1}g^*}(v^k+\beta\alpha^{-1}W(a-\alpha W^Tv^k)).
\end{equation*}
In this way, we can compute an approximation of the proximal operator of \( g\circ W \) by a finite number of steps of a primal-dual procedure, provided that the operator \(\prox_{\beta\alpha^{-1} g^*}\) and the matrix-vector product with the linear operators \( W \) and \( W^T \) are easily computable. 
Let \( j_{\mathrm{max}} \) be the maximum number of inner iterations used to compute the dual sequence \eqref{eq:dual_seq_NPD}, the resulting NPD algorithm is summarized in Algorithm~\ref{alg:npd}.

\begin{algorithm}
    \caption{NPD}
    \label{alg:npd}
    \begin{algorithmic}[1]
        \State Choose \( u^0 \in \mathbb{R}^d \), \( 0<\alpha<\frac{2}{L} \), \( 0<\beta<\frac{2}{\|W\|^2} \), \( j_{\mathrm{max}} \in \mathbb{N} \), \(\{\gamma_k\}_{k\in \mathbb{N}} \subseteq \mathbb{R}_{\ge 0}\).
        \State \( u^{-1} = u^0 \)
        \State \( v^{0,0} = 0 \)
        \For{\( k= 0, 1, \dots \)}
        \State Compute \(\gamma_k\)
        \State \(\bar{u}^k = u^k + \gamma_k (u^k - u^{k-1})\)
        \State \( u^{k+\frac{1}{2}} = \bar{u}^k - \alpha \nabla f(\bar{u}^k) \)
        \For{\( j = 0, 1, \dots, j_{\text{max}}-1 \)}
        \State \( u^{k,j} = u^{k+\frac{1}{2}} - \alpha W^T v^{k,j} \)
        \State \( v^{k,j+1} = \prox_{\beta \alpha^{-1} g^*}(v^{k,j} + \beta \alpha^{-1} W u^{k,j}) \)
        \EndFor
        \State \( v^{k+1,0} = v^{k, j_{\text{max}}} \)
        \State \( u^{k,j_{\text{max}}} = u^{k+\frac{1}{2}} - \alpha W^T v^{k,j_{\text{max}}} \)
        \State \( u^{k+1} = \frac{1}{j_{\text{max}}} \sum_{i=1}^{j_{\text{max}}} u^{k,i} \)
        \EndFor
    \end{algorithmic}
\end{algorithm}

The convergence of the iterations to a solution of the variational problem \eqref{eq:problem_leastsquares} can be proved under suitable assumptions on the extrapolation parameter $\gamma_k$, provided that \Cref{ass: standard} holds. For further details, we refer the reader to \cite{NPD}.

\paragraph{PNPD}\label{eq:secright}

In \cite{PNPD}, the authors prove that preconditioning strategies can be applied to the NPD framework to achieve faster convergence. 
In what follows, we consider only left preconditioning; for further details on the right preconditioning approach, the reader is referred to \cite{PNPD, NPDIT}.

The main idea is similar to the one used to derive ITTA from ISTA.
Therefore, by applying the NPD algorithm to problem~\eqref{eq:model_PNPD} instead of problem~\eqref{eq:problem_leastsquares} under assumption~\eqref{eq:commutative_PNPD}, we obtain a two-step iterative scheme, called the \textit{Preconditioned Nested Primal-Dual} (PNPD) \cite{PNPD}, defined as
\begin{equation}
    \label{eq:PNPD_scheme}
    \begin{cases}
        \bar{u}^k= u^k + \gamma_k(u^k-u^{k-1}), \\
        u^{k+1} \approx \prox_{\alpha_k g\circ W}(\bar u^k - \alpha_k P^{-1} \nabla f(\bar u^k)).
    \end{cases}
\end{equation}
The preconditioner $P$ and the extrapolation strategy are both intended to accelerate the iterative scheme. Indeed, as shown in \cite{PNPD}, the acceleration provided by the extrapolation term can also be achieved through a suitable choice of the preconditioner, and an inaccurate choice of $\gamma_k$ could lead to instability. Therefore, in the numerical experiments we set $\gamma_k=0$.

Algorithm~\ref{alg:pnpd} reports the pseudocode of the proposed PNPD method without extrapolation, where $\alpha_k = \alpha \in (0, 2/L)$ and \( L_S \) denotes the Lipschitz constant of \( \nabla f_S \). This constant depends on the choice of the preconditioner \( S \) (i.e., \( P \)), as a consequence of \eqref{eq:commutative_PNPD}.
In particular, thanks to equation~\eqref{eq:GfS},
we have
    \begin{equation*}
        \| \nabla f_S(x) - \nabla f_S(y) \| =  \| P^{-1} \nabla f(x) - P^{-1} \nabla f(y) \|
        \leq \|P^{-1} A^T A \| \| x - y \|,
    \end{equation*}
and it holds
\[
L_S = \|P^{-1} A^T A \|= \| A^T S^{-1} A \|.
\]

\begin{remark}
If $P$ and $S$ are chosen as in \eqref{eq:pnpd_P_numerical}, then it can be shown, by means of the singular value decomposition (SVD) of $A$, that $L_S \leq 1$ for all $\nu > 0$. Therefore, the step length in \eqref{eq:PNPD_scheme} can be fixed as $\alpha_k = 1$ throughout the iterations.
\end{remark}

The convergence of the iterates generated by the PNPD scheme can be established under the same assumptions as in the NPD case, together with condition \eqref{eq:commutative_PNPD} on the preconditioner $P$. For further details, we refer the reader to \cite{PNPD}.

\begin{algorithm}
    \caption{PNPD}
    \label{alg:pnpd}
    \begin{algorithmic}[1]
        \State Choose \( u^0 \in \mathbb{R}^d \), \(
        P\in\mathcal{S}_+(\R^d) \), \( 0<\alpha<\frac{2}{L_S} \), \(
        0<\beta<\frac{2}{\|W\|^2} \), \( j_{\mathrm{max}} \in \mathbb{N} \).
        \State \( v^{0,0} = 0 \)
        \For{\( k = 0, 1, \dots \)}
        \State \( u^{k+\frac{1}{2}} = u^k - \alpha P^{-1}\nabla f(u^k) \)
        \For{\( j = 0, 1, \dots, j_{\text{max}}-1 \)}
        \State \( u^{k,j} = u^{k+\frac{1}{2}} - \alpha W^T v^{k,j} \)
        \State \( v^{k,j+1} = \prox_{\beta \alpha^{-1} g^*}(v^{k,j} + \beta \alpha^{-1} W u^{k,j}) \)
        \EndFor
        \State \( v^{k+1,0} = v_{n}^{j_{\text{max}}} \)
        \State \( u^{k,j_{\text{max}}} = u^{k+\frac{1}{2}} - \alpha W^T v^{k,j_{\text{max}}} \)
        \State \( u^{k+1} = \frac{1}{j_{\text{max}}} \sum_{i=1}^{j_{\text{max}}} u^{k,i} \)
        \EndFor
    \end{algorithmic}
\end{algorithm}

\section{Multilevel framework} \label{sec:multilevel}
In this section, we introduce the multilevel framework adopted in the numerical experiments. The proposed approach is designed to enhance computational efficiency by combining multilevel strategies with suitable preconditioning techniques. The interplay between these two components improves both the convergence properties and the scalability of the overall method. The adopted strategy follows the multilevel scheme proposed in \cite{IML_FISTA}, suitably adapted to the structure of the problem under consideration.

In what follows, we introduce all the tools required to implement an effective multilevel scheme, together with the theoretical assumptions needed to guarantee convergence to the solution.

First, for the sake of simplicity and without loss of generality, we consider the two-level case. Accordingly, we use the index $h$ (respectively, $H$) for all quantities defined at the fine (respectively, coarse) level. Referring to the variational problem~\eqref{eq:problem_leastsquares}, we define $F_h\colon \mathbb{R}^{N_h}\to(-\infty,+\infty]$ as the objective function at the fine level, where $N_h=d$, and such that 
\[F_h=f_h+g_h, \qquad f_h=f, \; g_h=g\circ W.\] 
We then associate with this fine-level functions a coarse-level approximation,  where $N_H<N_h$, denoted by $F_H\colon \mathbb{R}^{N_H}\to(-\infty,+\infty]$, $f_H$, and $g_H$, respectively.

\subsection{Coarse model and transfer information operators}\label{sec:coarse_model_trans_operators}
Firstly, a multilevel scheme requires the transfer of information from one level to another. To this end, we introduce two transfer operators. The first one is the linear operator $I^H_h : \mathbb{R}^{N_h} \rightarrow \mathbb{R}^{N_H}$, referred to as the \textit{restriction operator}, which transfers information from the fine level to the coarse level. Conversely, we define the operator $I^h_H : \mathbb{R}^{N_H} \rightarrow \mathbb{R}^{N_h}$, called the \textit{prolongation operator}, which transfers information from the coarse level back to the fine level. We require these transfer operators to satisfy the following definition.
\begin{definition}
Two operators $I^H_h : \mathbb{R}^{N_h} \rightarrow \mathbb{R}^{N_H}$ and $I^h_H : \mathbb{R}^{N_H} \rightarrow \mathbb{R}^{N_h}$ are called coherent information transfer (CIT) operators if there exists $\xi > 0$ such that
\begin{equation}
I^h_H = \xi (I_h^H)^T.
    \label{def:CIT}
\end{equation}
\end{definition}
There are many different ways to construct CIT operators, for instance, by means of full weighting and bilinear interpolation, as adopted in the numerical results.

The basic idea of a multilevel strategy is to compute inexpensive corrections at a coarse resolution, which can then be used to update the current iterate after being prolonged to the fine level. We now focus on how to define the variational model at a coarser level. \\
Following \cite{IML_FISTA,Beck2012Smoothing}, we smooth the non-differentiable term $g\circ W$ in the sense of Definition~\ref{def: moreau}, which satisfies \Cref{def:smooth_convex_func}. 

\begin{definition}[Coarse model $F_H$ for non-smooth functions]\label{def:coarse_model}
The coarse model $F_H$ is defined at the point $u_h \in \mathbb{R}^{N_h}$ as
\begin{equation}\label{eq:FH}
F_H = f_H + g_{H} + \langle v_H, \,\cdot\, \rangle,
\end{equation}
where
\begin{equation}
\label{eq:vH_def}
v_H = I_h^H \bigl( \nabla f_h(u_h) + \nabla ^{\gamma_h}g_{h}(u_h) \bigr)
      - \bigl( \nabla f_H(I_h^H u_h) + \nabla ^{\gamma_H}g_{H}(I_h^H u_h) \bigr).
\end{equation}
The two functions $^{\gamma_h}g_h$ and $^{\gamma_H}g_H$ are smoothed versions of $g_h$ and $g_H$,
respectively, and they satisfy~\Cref{def:coherence_condition} with smoothing parameters
$\gamma_h > 0$ and $\gamma_H > 0$, respectively.
The smoothed coarse model $^{\gamma_H}F_H$ is defined at the point $u_h \in \mathbb{R}^{N_h}$ as
\begin{equation}\label{eq:FH_smooth}
^{\gamma_H}F_H = f_H + ^{\gamma_H}g_{H} + \langle v_H, \,\cdot\, \rangle.
\end{equation}
\end{definition}

\begin{definition}[First order coherence]\label{def:coherence_condition}
The first-order coherence between the smoothed version of the objective function $F_h$ at the fine level and the coarse level objective function $F_H$ is verified in a neighborhood of $u_h$ if it holds
\begin{equation}
\label{eq:coherence}
\nabla^{\gamma_H} F_H(I_h^H u_h) = I_h^H \, \nabla ^{\gamma_h}F_{h}(u_h).
\end{equation}
\end{definition}

\begin{lemma}
\label{lemma:first_order_coherence}
If $F_H$ is given by~\Cref{def:coarse_model}, then it satisfies the first order coherence (Definition~\ref{def:coherence_condition}).
\begin{proof}
Considering the gradient of the coarse smoothed model $^{\gamma_H}F_{H}$ in Equation \eqref{eq:FH_smooth} and combining it with the definition of $v_H$ in Equation \eqref{eq:vH_def}, yields 
\begin{align*}
    \nabla ^{\gamma_H}F_{H}(I_h^H u_h) & = \nabla f_H(I_h^H u_h) + \nabla^{\gamma_H}g_{H}(I_h^H u_h) + v_H \\
      & = I_h^H \bigl( \nabla f_h(u_h) + \nabla ^{\gamma_H}g_{h}(u_h) \bigr)\\
      &= I_h^H \, \nabla ^{\gamma_H}F_h (u_h).
\end{align*}
\end{proof}
\end{lemma}

Once the coarse model has been defined, the next step in a multilevel strategy consists of computing a suitable solution of the coarse problem and using it to obtain a better approximation at the fine level. More specifically, we require that a decrease in the objective function at the coarser level also implies a decrease in the fine-level objective function when the coarse solution is interpolated onto the finer grid. This consistency between levels is ensured by the linear term $v_H$ defined in \eqref{eq:vH_def}, which allows the coarse model to satisfy the first-order coherence condition of \Cref{def:coherence_condition}.
The smoothing approach aims to preserve fidelity to the original non-smooth problem while introducing desirable properties into the coarse model.\\
Depending on the regularization term, two different cases may arise. If $g\circ W$ is smooth, one can simply apply gradient-based methods without requiring any smoothing in the coarse model, that is, $\gamma_h=\gamma_H=0$. This is also the case when $g\circ W$ is non-smooth, but its proximity operator can be computed in closed form, since approximate solutions of the coarse model can be computed by means of proximal gradient methods.\\
However, when the proximity operator cannot be computed in closed form, as in the case of TV regularization, the use of the Moreau envelope may lead to some difficulties. Indeed, in order to compute the linear term $v_H$ defined in \eqref{eq:coherence}, we need to evaluate $\nabla^\gamma g_h = \nabla^\gamma(g\circ W)$. By \eqref{eq: grad_smooth}, we have
\[
    \nabla^\gamma (g\circ W) = \gamma^{-1}(\text{Id}-\prox_{\gamma g\circ W}).
\]
Since $\prox_{\gamma g\circ W}$ does not admit a closed-form expression, we are unable to compute the linear term $v_H$. To overcome this issue, we adopt the same strategy proposed in \cite{IML_FISTA}: instead of directly applying the Moreau envelope to $g\circ W$, we first compute the Moreau envelope of $g$ and then compose it with $W$. This smoothing strategy satisfies the following interesting properties.

\begin{lemma}[\cite{IML_FISTA}, Lemma 3.2]\label{lemma:approx}
    $^{\gamma}g\circ W$ is a smoothed convex function approximating $g\circ W$ in the sense of Definition \ref{def:smooth_convex_func}.
\end{lemma}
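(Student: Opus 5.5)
The plan is to verify the two requirements of Definition~\ref{def:smooth_convex_func} for $R=g\circ W$ and $R_\gamma={}^{\gamma}g\circ W$: continuous differentiability, and the two-sided bound $g(Wu)-\eta_1\gamma\le {}^{\gamma}g(Wu)\le g(Wu)+\eta_2\gamma$ with constants $\eta_1,\eta_2$ independent of $u$ and $\gamma$.

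\emph{Differentiability.} By Definition~\ref{def: moreau}, ${}^{\gamma}g$ is convex, real-valued and Fr\'echet differentiable on $\mathbb{R}^{d'}$. By \eqref{eq: grad_smooth}, its gradient $\gamma^{-1}(\mathrm{Id}-\prox_{\gamma g})$ is $\gamma^{-1}$-Lipschitz. Composing with the linear map $W$, the chain rule gives that ${}^{\gamma}g\circ W$ is convex and $C^1$, with
\[
\nabla({}^{\gamma}g\circ W)(u)=\gamma^{-1}W^T\bigl(Wu-\prox_{\gamma g}(Wu)\bigr),
\]
and this gradient is $\|W\|^2/\gamma$-Lipschitz. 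This step is routine.

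\emph{Upper bound.} Take $v=Wu$ in the infimum defining ${}^{\gamma}g(Wu)$. This gives ${}^{\gamma}g(Wu)\le g(Wu)$, so the upper bound holds with $\eta_2=0$.

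\emph{Lower bound.} This is the main obstacle. It cannot hold for an arbitrary proper lsc $g$. For instance, if $g$ is an indicator function, then $g(Wu)=+\infty$ outside the set while ${}^{\gamma}g(Wu)$ is finite. I would therefore make explicit the setting the paper actually uses: $g$ is real-valued and $\ell$-Lipschitz, as for $\lambda\|\cdot\|_1$ or the isotropic TV norm $\lambda\sum_i\|(\cdot)_i\|_2$. Under this assumption, for every $z,v\in\mathbb{R}^{d'}$ we have $g(v)\ge g(z)-\ell\|z-v\|$. Substituting into the infimum gives
\[
{}^{\gamma}g(z)\;\ge\; g(z)+\inf_{t\ge0}\Bigl\{-\ell t+\tfrac{1}{2\gamma}t^2\Bigr\}\;=\;g(z)-\tfrac{\gamma\ell^2}{2}.
\]
Setting $z=Wu$ yields the lower bound with $\eta_1=\ell^2/2$, which is independent of $u$.

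\emph{Conclusion.} Combining the three steps, ${}^{\gamma}g\circ W$ is a smoothed convex approximation of $g\circ W$ in the sense of Definition~\ref{def:smooth_convex_func}, with $\eta_1=\ell^2/2$ and $\eta_2=0$. Then $\eta_1+\eta_2=\ell^2/2>0$ whenever $g$ is not constant. If $g$ is constant, ${}^{\gamma}g=g$ and any positive $\eta_1$ trivially works.
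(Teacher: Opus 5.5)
Your proof is correct and follows the standard argument behind the cited result; the paper itself gives no proof and only refers to \cite{IML_FISTA}. That argument takes $v=Wu$ in the infimum for the upper bound, evaluates the Beck--Teboulle lower bound ${}^{\gamma}g\ge g-\tfrac{\gamma\ell^2}{2}$ for an $\ell$-Lipschitz $g$ at $Wu$, and uses the chain rule for smoothness. You are right that a Lipschitz (in particular finite-valued) assumption on $g$ has to be added, since the statement fails for indicator functions under the paper's standing hypotheses alone; this costs nothing here, because both regularizers actually used, $\lambda\|\cdot\|_1$ and the isotropic TV norm, are Lipschitz.
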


\begin{lemma}\label{lemma:comp}
For any $u \in \mathbb{R}^d$, $W: \mathbb{R}^d \to \mathbb{R}^{d'}$ and $g: \mathbb{R}^{d'} \to \mathbb{R}$ a convex, lower semi-continuous, and proper function, we have
\begin{equation}
\label{eq:gradient}
\nabla (^\gamma g\circ W)(u) = \gamma^{-1} W^T \big( Wu - \mathrm{prox}_{\gamma g}(Wu) \big).
\end{equation}

\end{lemma}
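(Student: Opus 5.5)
The plan is a chain-rule argument: get the gradient of the Moreau envelope of $g$ on $\mathbb{R}^{d'}$, then compose with the linear map $W$. Throughout, $g$ is taken proper, convex and lower semicontinuous; the codomain $\mathbb{R}$ in the statement is read as allowing $+\infty$, as in Assumption~\ref{ass: standard}.

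First I will recall the facts stated after Definition~\ref{def: moreau}, now applied to $g$ on $\mathbb{R}^{d'}$ rather than on $\mathbb{R}^d$. The envelope ${}^\gamma g$ is finite-valued, convex and Fréchet differentiable everywhere on $\mathbb{R}^{d'}$. By \eqref{eq: grad_smooth} its gradient is
\[
\nabla({}^\gamma g)(w)=\gamma^{-1}\bigl(w-\prox_{\gamma g}(w)\bigr),
\]
and this gradient is $\gamma^{-1}$-Lipschitz. If a self-contained justification is wanted, I will argue as follows. Set $p=\prox_{\gamma g}(w)$, which exists and is unique by strong convexity of the prox objective. The optimality condition gives $\gamma^{-1}(w-p)\in\partial g(p)$. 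Sandwiching ${}^\gamma g(w')-{}^\gamma g(w)$ between linear terms in $w'-w$ then gives
\[
{}^\gamma g(w')-{}^\gamma g(w)-\bigl\langle\gamma^{-1}(w-p),\,w'-w\bigr\rangle\in\Bigl[0,\tfrac{1}{2\gamma}\|w'-w\|^2\Bigr].
\]
The upper bound comes from plugging $v=p$ into the infimum defining ${}^\gamma g(w')$. The lower bound comes from the subgradient inequality at $p'=\prox_{\gamma g}(w')$ together with firm nonexpansiveness of the prox. This two-sided bound yields differentiability with the stated gradient.

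Next I will apply the chain rule. The map $u\mapsto Wu$ is linear, hence differentiable with derivative $W$, and ${}^\gamma g$ is differentiable at every point of $\mathbb{R}^{d'}$. The ordinary chain rule for Fréchet derivatives therefore gives
\[
\nabla({}^\gamma g\circ W)(u)=W^T\,\nabla({}^\gamma g)(Wu).
\]
To see this concretely, expand
\[
{}^\gamma g(W(u+h))={}^\gamma g(Wu)+\langle\nabla {}^\gamma g(Wu),Wh\rangle+o(\|Wh\|),
\]
and note that $\|Wh\|\le\|W\|\|h\|$ turns the remainder into $o(\|h\|)$. Substituting the envelope gradient from the first step gives exactly \eqref{eq:gradient}.

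This route needs no qualification condition such as $Wu_0\in\relint(\dom g)$, because differentiability of ${}^\gamma g$ holds everywhere on $\mathbb{R}^{d'}$. The only real obstacle is the differentiability and gradient formula of the envelope, and the paper states this before Assumption~\ref{ass: standard}, so I will cite it. As a by-product, the resulting gradient is Lipschitz with constant $\gamma^{-1}\|W\|^2$.
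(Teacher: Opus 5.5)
Your proposal is correct and matches the argument the paper implicitly relies on. The paper states this lemma without proof, and the intended justification is exactly yours: apply the Moreau-envelope gradient formula \eqref{eq: grad_smooth} to $g$ on $\mathbb{R}^{d'}$ and combine it with the chain rule for the linear map $W$, reading ${}^\gamma g\circ W$ as $({}^\gamma g)\circ W$.

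One small imprecision sits in your optional self-contained sandwich. The lower bound $0$ comes most directly from the subgradient inequality for $\gamma^{-1}(w-p)\in\partial g(p)$, tested at the point $p'$. This leaves exactly $\frac{1}{2\gamma}\|(w'-p')-(w-p)\|^2\ge 0$, and firm nonexpansiveness is not needed. It does not affect the argument, since any two-sided $O(\|w'-w\|^2)$ bound already gives differentiability.
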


This means that an explicit expression for $\mathrm{prox}_{\gamma g}$ is sufficient to characterize the gradient of ${}^\gamma (g\circ W)$. However, if no explicit information on the proximity operator of $g$ is available, but only on its convex dual function $g^*$, then we can exploit the Moreau decomposition \eqref{prep:Moreau_dec}, with $P=\mathrm{Id}$, and rewrite, according to Lemma~\ref{lemma:approx}, the gradient of ${}^\gamma (g\circ W)$ as
\begin{align*}
    \nabla(^\gamma g\circ W)(u)&= \gamma^{-1} W^* \big( Wu - \mathrm{prox}_{\gamma g}(Wu) \big)\\
    &= \gamma^{-1} W^* \big( Wu - (Wu-\gamma \operatorname{prox}_{\gamma^{-1}g^\ast}(\gamma^{-1}Wu) )\big)\\
    &=W^T\operatorname{prox}_{\gamma^{-1}g^\ast}(\gamma^{-1}Wu).
\end{align*}

Within the multilevel framework, fine-level iterations are enhanced by coarse-level corrections, which are computed through V-cycles (see Algorithms \ref{alg:Vcycle} and \ref{alg:multilevel}). Specifically, during a V-cycle, a sequence of iterations is performed at the coarse level. The difference between the final and initial coarse-level iterates, which acts as an error-correction term, is then prolonged to the fine level and used to update the current approximation. Between successive V-cycles, standard fine-level iterations may be performed. Finally, the multilevel acceleration is applied only during the first $p$ iterations, after which the algorithm proceeds using the standard method alone.
\begin{algorithm}
\caption{Vcycle\_iter\quad (2 levels)}\label{alg:Vcycle}
\begin{algorithmic}[1]
\State $u_H^{k,0} = I_h^H u_h^k$
\State $v_{H}^k = I_h^H \, \nabla ^\gamma F_{h}(u_h^k) - \nabla ^\gamma F_{H}(u_H^{k,0})$
\State $u_H^{k,m} = \Phi(u_H^{k,0}, m, v_H^k)$
\State $c_h^{k,m}= I_H^h\left(u_H^{k,m}-u_H^{k,0}\right)$
\State $\tau^{k}=5$ ,\qquad $t=0$
\While{$F_h(u_h^k+\tau^{k}c_h^{k,m})> F_h(u_h^k) + c\tau^{k}\nabla ^\gamma F_h^T(u_h^k)c_h^{k,m}\land t < \mathrm{max_{bt}}$}
\State $\tau^{k} = (0.85)^t\,\tau^{k}$,\quad $t = t+1$
\EndWhile
\State $u_{h}^{k+1} = u_h^k +\tau^{k}c_h^{k,m}$
\end{algorithmic}
\end{algorithm}

\begin{algorithm}
\caption{MULTILEVEL\quad (2 levels)}\label{alg:multilevel}
\begin{algorithmic}[1]
\State Set $u_{h}^0 \in \mathbb{R}^N$, $p$, $\mathrm{max_{bt}}, m, \Phi, \mathrm{maxit}$.
\For {$k=1,\dots,p$}
\State $u_{h}^{k}=$\textbf{Vcycle\_iter($u_h^{k-1},m,\Phi_H,\mathrm{max_{bt}}$)}
\EndFor\\\vspace*{-0.5cm}
\State $u_{h}^{\mathrm{maxit}} = \Phi_{h}(u_h^p, \mathrm{maxit}-p)$
\end{algorithmic}
\end{algorithm}

\subsection{Decrease of the objective function}\label{sec:decrease_obj_func}
The last ingredient needed to properly define a multilevel strategy is the choice of the solver $\Phi_H$ to be used for the coarse variational problem. Ideally, this solver should generate a sequence $\{u_{H}^m\}_{m \geq 0}$ that yields a sufficient decrease of ${}^{\gamma_H}F_H$ after a finite number of iterations.
\begin{assumption}\label{ass:coarse_operator}
Let $(\Phi_{H,\ell})_{\ell \in \mathbb{N}}$ be a sequence of operators such that there exists an integer $m>0$ with the following property: if $u_{H}^m = \Phi_{H,m-1} \circ \ldots \circ \Phi_{H,0}(u_{H}^0)$,
then
\[
{}^{\gamma_H}F_H(u_{H}^m) \leq {}^{\gamma_H}F_H(u_{H}^0).
\]
Moreover, the correction $u_{H}^m-u_{H}^0$ is bounded.
\end{assumption}

Typical choices for the sequence of operators $\{\Phi_{H,\ell}\}_{\ell\in\mathbb{N}}$ include gradient descent methods and forward--backward schemes. For simplicity, in the numerical experiments section, we choose $\Phi_{H,\ell}$ to be the same solver used at the fine level. These solver operators guarantee that $u_{H}^m-u_{H}^0$ is a bounded descent direction for ${}^{\gamma_H}F_H$ (due to convergence of the sequence \cite{Dossal2015stability}).

\begin{lemma}{\emph{(Descent direction for the fine-level smoothed function)}.}\label{lemma:decrease_smooth}
Let us assume that $I_h^H$ and $I_H^h$ are CIT operators, that ${}^{\gamma_H}F_H$ satisfies Definition~\ref{def:coarse_model}, and that $\Phi_{H}$ satisfies Assumption~\ref{ass:coarse_operator}. Then, $I_H^h(u_{H}^m-u_{H}^0)$ is a descent direction for $f_h + {}^{\gamma_h}g_{h}$.
\begin{proof}
Let $u_h \in \mathbb{R}^{N_h}$ and define $p_H:=u_{H}^m-u_{H}^0$. Recall that $u_{H}^0 = I_h^H u_h$. By the definition of descent direction, we have
\begin{equation*}
     \langle p_H,\nabla F_{H}(u_{H}^0) \rangle \leq 0.
\end{equation*}
By the first-order coherence condition and imposing $I_h^H = \xi^{-1} \left(I_H^h\right)^T$, we obtain
\[\langle p_H,\nabla F_{H}(u_{H}^0) \rangle = \langle p_H,I_h^H \nabla (f_h + {}^{\gamma_h}g_{h})(u_h) \rangle =
    \xi^{-1} \langle I_H^h(p_H),\nabla (f_h + {}^{\gamma_h}g_{h})(u_h) \rangle \leq 0.\]
\end{proof}
\end{lemma}

The previous lemma shows that $I_H^h(u_{H}^m-u_{H}^0)$ is a descent direction for the smoothed objective function ${}^{\gamma_h}F_h$ at the fine level. We conclude this part on the decrease of the objective function by deriving a bound for the decrease of the non-smooth objective function $F_h$ at the fine level. In \cite{MM_FISTA}, the authors search for a suitable step size $\tau$ that prevents excessively large corrections from the coarse level by guaranteeing that
\begin{equation}
    {}^\gamma F_h(u_h + \tau I_H^h(u_{H}^m-u_{H}^0)) \leq {}^\gamma F_h(u_h).
\label{eq:decrease_smooth}
\end{equation}
In our case, however, we replace condition \eqref{eq:decrease_smooth} with the more restrictive Armijo condition. More precisely, we require the parameter $\tau$ to satisfy
\begin{equation}\label{eq:cond:Armijo}
    {}^\gamma F_h(u_h + \tau I_H^h(u_{H}^m-u_{H}^0)) \leq {}^\gamma F_h(u_h)+c_1\tau\langle\nabla{}^\gamma
F_h(u_h),I_H^h(u_{H}^m-u_{H}^0)\rangle,
\end{equation}
for some positive constant $c_1\in(0,1)$.

\begin{lemma}[Fine-level decrease]
\label{lemma:decrease_finefunction}
If the assumptions of Lemma~\ref{lemma:decrease_smooth} hold, then the iterations of Algorithm \ref{alg:multilevel_auto} satisfy
    \begin{equation}
        F_h(u_h + \tau I_H^h(u_{H}^m-u_{H}^0)) \leq F_h(u_h) + (\mu_1 + \mu_2)\gamma_h.
    \end{equation}
\end{lemma}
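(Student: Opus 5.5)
The plan is to sandwich the non-smooth fine objective $F_h = f_h + g_h$ between its smoothed version ${}^{\gamma_h}F_h = f_h + {}^{\gamma_h}g_h$ and a shift by $\gamma_h$. Then I will use the Armijo-type decrease \eqref{eq:cond:Armijo} for the smoothed function along the prolonged coarse direction. Throughout, $\mu_1,\mu_2$ are the constants $\eta_1,\eta_2$ of Definition~\ref{def:smooth_convex_func} associated with the smoothing ${}^{\gamma_h}g_h$ of $g_h$; for $g_h = g\circ W$ this smoothing is ${}^{\gamma_h}g\circ W$, which is admissible by Lemma~\ref{lemma:approx}. Write $d_h := I_H^h(u_H^m - u_H^0)$ and $u_h^+ := u_h + \tau d_h$.

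\textbf{Step 1 (lower sandwich at the new point).} By Definition~\ref{def:smooth_convex_func}, $g_h(w) - \mu_1\gamma_h \le {}^{\gamma_h}g_h(w)$ for every $w$. Adding $f_h(w)$ to both sides gives $F_h(u_h^+) \le {}^{\gamma_h}F_h(u_h^+) + \mu_1\gamma_h$.

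\textbf{Step 2 (smoothed decrease).} Since $\tau$ is selected so that \eqref{eq:cond:Armijo} holds, we have
\[
{}^{\gamma_h}F_h(u_h^+) \le {}^{\gamma_h}F_h(u_h) + c_1\tau\langle \nabla {}^{\gamma_h}F_h(u_h), d_h\rangle .
\]
By Lemma~\ref{lemma:decrease_smooth}, $d_h$ is a descent direction for $f_h + {}^{\gamma_h}g_h$, so the inner product is $\le 0$. Because $c_1,\tau > 0$, it follows that ${}^{\gamma_h}F_h(u_h^+) \le {}^{\gamma_h}F_h(u_h)$. If the backtracking loop of Algorithm~\ref{alg:Vcycle} stops at $\mathrm{max_{bt}}$ without satisfying Armijo, I will argue as follows. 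Since $d_h$ is a descent direction with a strictly negative inner product, or else it is a zero step, Armijo holds for all sufficiently small $\tau$. Hence, with the assumption that $\mathrm{max_{bt}}$ is large enough, the accepted $\tau$ satisfies \eqref{eq:cond:Armijo}. Alternatively, the update can be rejected and $\tau=0$ used, in which case the claim is trivial.

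\textbf{Step 3 (upper sandwich at the old point).} Again by Definition~\ref{def:smooth_convex_func}, ${}^{\gamma_h}g_h(u_h) \le g_h(u_h) + \mu_2\gamma_h$, so ${}^{\gamma_h}F_h(u_h) \le F_h(u_h) + \mu_2\gamma_h$. Chaining Steps 1--3 gives
\[
F_h(u_h^+) \le F_h(u_h) + (\mu_1+\mu_2)\gamma_h .
\]

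The main obstacle is bookkeeping rather than analysis. First, one must make precise that the smoothing used at the fine level is ${}^{\gamma_h}g\circ W$ and not ${}^{\gamma_h}(g\circ W)$; this is why Lemma~\ref{lemma:approx}, rather than the Moreau envelope bounds directly, supplies $\mu_1,\mu_2$. Second, one must justify that the accepted step really satisfies the Armijo inequality, which is the backtracking issue in Step 2. For the Moreau envelope of $g$ one can in fact take $\mu_1 = 0$, since ${}^\gamma g \le g$. In that case $\mu_2$ comes from the standard bound $g - {}^\gamma g \le \tfrac{\gamma}{2}\,\mathrm{Lip}(g)^2$, when $g$ is Lipschitz.
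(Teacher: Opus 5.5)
Your proof is correct and follows the paper's argument exactly: the lower smoothing bound at $u_h+\tau I_H^h(u_H^m-u_H^0)$, then the Armijo inequality \eqref{eq:cond:Armijo} with the nonpositive inner product from Lemma~\ref{lemma:decrease_smooth}, then the upper smoothing bound at $u_h$. Your extra discussion of backtracking is covered by Algorithm~\ref{alg:Vcycle_auto} itself, which only updates when the test succeeds, so you do not need the ``strictly negative or zero step'' dichotomy (which Lemma~\ref{lemma:decrease_smooth}, giving only $\le 0$, does not supply). One slip in your closing remark: since ${}^{\gamma}g\le g$, it is $\mu_2$ that can be taken equal to $0$, while $g-{}^{\gamma}g\le\tfrac{\gamma}{2}\mathrm{Lip}(g)^2$ supplies $\mu_1$. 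The sum $\mu_1+\mu_2$, and hence the lemma, is unaffected.
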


\begin{proof}
This follows directly from the definition of smoothed convex function given in Definition~\ref{def:smooth_convex_func}. Since there exists a value of $\tau$ satisfying the Armijo condition, we have
    \begin{align*}
        F_h(u_h + \tau I_H^h(u_{H}^m-u_{H}^0)) &\leq {}^\gamma F_h(u_h + \tau I_H^h(u_{H}^m-u_{H}^0))+\mu_1\gamma_h\\
        &\leq {}^\gamma F_h(u_h)+ c_1\tau\langle\nabla{}^\gamma
F_h(u_h),I_H^h(u_{H}^m-u_{H}^0)\rangle+\mu_1\gamma_h\\
        &\leq F_h(u_h)+(\mu_1+\mu_2) \gamma_h.
    \end{align*}
\end{proof}
The last inequality follows from the fact that, by Lemma \ref{lemma:decrease_smooth}, the term
$c_1\tau\langle\nabla{}^\gamma F_h(u_h),I_H^h(u_{H}^m-u_{H}^0)\rangle$
is negative, since $I_H^h(u_{H}^m-u_{H}^0)$ is a descent direction for ${}^\gamma F_h$.
This shows that a coarse-level minimization step leads to a decrease of $F_h$, up to the constant $(\mu_1+\mu_2)\gamma_h$, which can be made arbitrarily small by letting $\gamma_h$ tend to zero.

\subsection{Extension to a multilevel scheme}\label{sec:extension_multilevel}
The extension of the previously presented results to the multilevel framework is straightforward. Indeed, when considering $J$ levels in the multilevel approach, the analysis developed above can be applied to each pair of consecutive levels. Proceeding recursively, one can show that the coarsest level $J$ produces a bounded correction that defines a descent direction for the upper level. This correction can then be recursively exploited to generate a new descent direction for level $J-1$, and so on, until a refinement of the initial point at the finest level is obtained. In terms of classical multilevel terminology, all these computations can be interpreted as a single V--cycle of the proposed multilevel strategy.

In principle, several V--cycles could be performed consecutively. The general multilevel scheme is summarized in \Cref{fig:general_scheme_multilevel}, where we consider $J=4$ levels, perform $m$ iterations on each coarse grid, and apply $s$ fine-level iterations after each V--cycle.

We also design an adaptive strategy to select the number of coarse iterations and V--cycles automatically, based on whether the Armijo condition \eqref{eq:cond:Armijo} is satisfied, as reported in \Cref{alg:multilevel_auto,alg:Vcycle_auto}. This condition is used to estimate the parameter $\tau$, which is computed through a backtracking procedure so that \eqref{eq:cond:Armijo} holds. The idea is as follows:
\begin{enumerate}[label=(\alph*)]
    \item Perform $m$ iterations on the coarse grid and store all intermediate reconstructions.
    \item Check whether the computed reconstruction satisfies the Armijo condition.
    \item If the condition is not satisfied, select the reconstruction obtained after $\floor*{\frac{m}{2}}$ iterations and repeat the Armijo test. Continue by progressively reducing the number of iterations (e.g., $4 \rightarrow 2 \rightarrow 1$).
    \item If the reconstruction obtained after a single iteration still does not satisfy the Armijo condition, then no V--cycle is performed and no additional V--cycles are attempted.
\end{enumerate}
\begin{figure}[htbp]
    \centering
    \begin{tikzpicture}

\node[circle, fill=black, inner sep=1pt] (L1d1) at (0,0) {};
\node[circle, fill=black, inner sep=1pt] (L2d1) at (0.25,-0.5) {};
\node[circle, fill=black, inner sep=1pt] (L3d1) at (0.5,-1) {};
\node[circle, fill=black, inner sep=1pt] (L4d1) at (0.75,-1.5) {};
\node[circle, fill=black, inner sep=1pt] (L4u1) at (1.75,-1.5) {};
\node[circle, fill=black, inner sep=1pt] (L3u1) at (2,-1) {};
\node[circle, fill=black, inner sep=1pt] (L3u12) at (3,-1) {};
\node[circle, fill=black, inner sep=1pt] (L2u1) at (3.25,-0.5) {};
\node[circle, fill=black, inner sep=1pt] (L2u12) at (4.25,-0.5) {};
\node[circle, fill=black, inner sep=1pt] (L1u1) at (4.5,0) {};
\node[circle, fill=black, inner sep=1pt] (L1d2) at (5.25,0) {};
\node[circle, fill=black, inner sep=1pt] (L2d2) at (5.5,-0.5) {};
\node[circle, fill=black, inner sep=1pt] (L3d2) at (5.75,-1) {};
\node[circle, fill=black, inner sep=1pt] (L4d2) at (6,-1.5) {};
\node[circle, fill=black, inner sep=1pt] (L4u2) at (7,-1.5) {};
\node[circle, fill=black, inner sep=1pt] (L3u2) at (7.25,-1) {};
\node[circle, fill=black, inner sep=1pt] (L3u22) at (8.25,-1) {};
\node[circle, fill=black, inner sep=1pt] (L2u2) at (8.5,-0.5) {};
\node[circle, fill=black, inner sep=1pt] (L2u22) at (9.5,-0.5) {};
\node[circle, fill=black, inner sep=1pt] (L1u2) at (9.75,0) {};
\node[circle, fill=black, inner sep=0pt] (L1) at (11,0) {};

\draw[->] (L1d1) -- (L2d1);
\draw[->] (L2d1) -- (L3d1);
\draw[->] (L3d1) -- (L4d1);
\draw[-, dashed] (L4d1) -- (L4u1) node[midway, below, align=center]{\tiny\shortstack{$m$\\iterations}};
\draw[->] (L4u1) -- (L3u1);
\draw[-, dashed] (L3u1) -- (L3u12) node[midway, below, align=center]{\tiny\shortstack{$m$\\iterations}};
\draw[->] (L3u12) -- (L2u1);
\draw[-, dashed] (L2u1) -- (L2u12) node[midway, below, align=center]{\tiny\shortstack{$m$\\iterations}};
\draw[->] (L2u12) -- (L1u1);
\draw[-,dashed] (L1u1) -- (L1d2)node[midway, below, align=center]{\tiny\shortstack{$s$\\iterations}};
\draw[->] (L1d2) -- (L2d2);
\draw[->] (L2d2) -- (L3d2);
\draw[->] (L3d2) -- (L4d2);
\draw[-, dashed] (L4d2) -- (L4u2)node[midway, below, align=center]{\tiny\shortstack{$m$\\iterations}};
\draw[->] (L4u2) -- (L3u2);
\draw[-, dashed] (L3u2) -- (L3u22)node[midway, below, align=center]{\tiny\shortstack{$m$\\iterations}};
\draw[->] (L3u22) -- (L2u2);
\draw[-, dashed] (L2u2) -- (L2u22)node[midway, below, align=center]{\tiny\shortstack{$m$\\iterations}};
\draw[->] (L2u22) -- (L1u2);
\draw[-, dashed] (L1u2) -- (L1)node[midway, below, align=center]{\tiny\shortstack{maxit$-s-2$\\iterations}};

\draw[decorate, decoration={brace, amplitude=4pt}] (L1d1.north) -- (L1u1.north) node[midway, above=6pt] {1 V-cycle};

\node[left=5pt] at (L1d1) {Level 1};
\node[left=5pt] at (L2d1) {Level 2};
\node[left=5pt] at (L3d1) {Level 3};
\node[left=5pt] at (L4d1) {Level 4};

\end{tikzpicture}
    \caption{General scheme of the multilevel approach with $4$ levels, $m$ coarse iterations and $\text{maxit}$ fine iterations.}
    \label{fig:general_scheme_multilevel}
\end{figure}

\begin{algorithm}[htbp]
\caption{MULTILEVEL\_AUTO\quad (2 levels)}\label{alg:multilevel_auto}
\begin{algorithmic}[1]
\State Set $u_{h}^0 \in \mathbb{R}^N$, $p$, $\mathrm{max_{bt}}, m, \Phi, \mathrm{cycle} = \text{true}$.
\While { $\mathrm{cycle} == \text{true}$}
\State [$u_{h}^{k},$ $\mathrm{cycle}]=
$\,\textbf{Vcycle\_iter\_auto($u_h^{k-1},m,\Phi_H,\mathrm{max_{bt}}$)}
\EndWhile
\State $u_{h}^{\mathrm{maxit}} = \Phi_{h}(u_h^p, \mathrm{maxit}-p)$
\end{algorithmic}
\end{algorithm}

\begin{algorithm}[htbp]
\caption{Vcycle\_iter\_auto\quad (2 levels)}\label{alg:Vcycle_auto}
\begin{algorithmic}[1]
\State $u_H^{k,0} = I_h^H u_h^k$
\State $v_{H}^k = I_h^H \, \nabla F_{h}(u_h^k) - \nabla ^\gamma F_{H}(u_H^{k,0})$
\State $u_H^{k,m} = \Phi_H(u_H^{k,0}, m, v_H^k)$
\State $\mathrm{success} = \text{false}$
\While{$\mathrm{success} = \text{false}$ \textbf{and} $m \geq 1$}
\State $c_h^{k,m}= I_H^h\left(u_H^{k,m}-u_H^{k,0}\right)$
\State $\tau^{k}=5$ ,\qquad $t=0$
\While{$F_h(u_h^k+\tau^{k}c_h^{k,m})> F_h(u_h^k) + c\tau^{k}\nabla F_h^T(u_h^k)c_h^{k,m}\land t < \mathrm{max_{bt}}$}
\State $\tau^{k} = (0.85)^t\,\tau^{k}$,\quad $t = t+1$
\EndWhile
\If{$t = \mathrm{max_{bt}}$}
\State $m=\floor*{\frac{m}{2}}$
\Else
\State $\mathrm{success} = \text{true}$
\EndIf
\EndWhile
\If { $\mathrm{success} = \text{true}$}
\State $u_{h}^{k+1} = u_h^k +\tau^{k}c_h^{k,m}$
\Else
\State $\mathrm{cycle} = \text{false}$
\EndIf
\end{algorithmic}
\end{algorithm}

\section{Convergence}\label{sec:conv}
In this section, we show that the considered multilevel framework generates a convergent sequence of iterates $\{u^k\}$ converging to a solution of the original problem \eqref{eq:problem_leastsquares}. For a more detailed discussion of the convergence analysis, we refer the reader to \cite{IML_FISTA}. In order to study the convergence of the multilevel approach, it is necessary to understand the type of approximation introduced when computing the proximity operator of the regularization term. To this end, we first extend the notion of subdifferential through the following definition:
\begin{definition}
    The $\epsilon$-subdifferential of $R$ at $u\in\dom(R)$ is defined as:
    \begin{equation*}
        \partial_{\epsilon}R(u)=\{y\in\mathbb{R}^d \mid R(u)\ge R(u) +\langle x-u,y\rangle-\epsilon, \forall x\in\mathbb{R}^d\}.
    \end{equation*}
\end{definition}

Based on this definition, different types of approximations can be introduced. However, typical image restoration problems in which dual optimization is employed are based on what is called type $2$ approximation. 

\begin{definition}\label{def:type_2_approx}
    We say that $x\in\mathbb{R}^d$ is a type $2$ approximation of $\prox_{\gamma R}(u)$ with precision $\epsilon$, and we write $x\approx_{2,\epsilon}\prox_{\gamma R}(u)$, if and only if
    \begin{equation*}
        \gamma^{-1}(u-x)\in\partial_{\epsilon}R(x).
    \end{equation*}
\end{definition}

The convergence results corresponding to all the approximation types, as well as the case in which the proximal operator can be computed in closed form, are discussed in detail in \cite{IML_FISTA}. 
In what follows, we show that the strategy adopted here to approximate the proximity operator coincides with the approach proposed in \cite{IML_FISTA}, and therefore yields a type $2$ approximation.

Firstly, recall that in our case the regularization term is given by $R(u) = (g\circ W)(u)$. The NPD and PNPD methods, described in Section~\ref{sec:inex_prox_methods}, compute an approximation of the proximal operator based on Lemma \ref{lemma:primal_dual_NPD} that is they compute a dual sequence $v^{k}$ defined as
\begin{equation}\label{eq:dual_sequence}
    v^{k+1} = \prox_{\beta \alpha^{-1}g^*}(v^k+\beta\alpha^{-1}W(a-\alpha W^Tv^k)), \quad \forall k \geq 0,
\end{equation}
where $\alpha>0$, $0<\beta<2/\|W\|^2$ and $v^0\in\mathbb{R}^{d'}$.
To show that this coincides with a type 2 approximation, we will show that this coincides with applying any proximal gradient methods to a particular minimization problem. Indeed, starting from the standard definition of the proximity operator, we switch to the dual formulation by using the convex conjugate defined in \Cref{def:convex_conj}. Namely,
\begin{align}
    \prox_{\gamma g\circ W}(u) 
    &= \argmin_{x\in\mathbb{R}^d}\frac{1}{2}\|x-u\|_2^2+\gamma(g(Wx)) \notag\\
    &= \argmin_{x\in\mathbb{R}^d}\max_{w\in\mathbb{R}^d}\frac{1}{2}\|x-u\|_2^2 + \gamma(x^TW^Tw-g^*(w)) \notag\\
    &= \argmax_{w\in\mathbb{R}^d}-\frac{1}{2\gamma}\|u-\gamma W^Tw\|_2^2+\frac{1}{2\gamma}\|u\|_2^2-g^*(w) \notag\\
    &= \argmin_{w\in\mathbb{R}^d}\frac{1}{2\gamma}\|u-\gamma W^Tw\|_2^2+g^*(w).
    \label{eq:dual_prox}
\end{align}
Problem \eqref{eq:dual_prox} is usually referred to as the dual problem. An approximate solution can be computed by applying any suitable optimization method. For instance, by using a proximal gradient method, one obtains the iterative scheme
\begin{equation*}
    w^{k+1} = \prox_{\mu g^*}(w^{k}+\mu W(u-\gamma W^Tw^{k})),
\end{equation*}
where the parameter $\mu$ must be chosen according to the Lipschitz constant of the gradient of the differentiable term $\frac{1}{2\gamma}\|u-\gamma W^Tw\|_2^2$. In particular, one requires $\mu < \frac{2}{\gamma \|WW^T\|}$. Therefore, by writing $\mu = \beta\gamma^{-1}$ with $0<\beta < 2/\|W\|_2^2$, we recover exactly the same iterative scheme as in \eqref{eq:dual_sequence}. Dual optimization thus provides a simple way to approximate the proximity operator, while also offering theoretical guarantees on the computed approximation, as stated in the following proposition.

\begin{proposition}
    Assume that $(w_k)_{k\in\mathbb{N}}$ is a minimizing sequence for the dual problem \eqref{eq:dual_prox}. Then:
    \begin{itemize}
        \item The sequence $(u-\gamma W^Tw_k)_{k\in\mathbb{N}}$ converges to the exact solution of $\prox_{\gamma g\circ W}(u)$.
        \item This sequence provides a type $2$ approximation of the proximity operator.
    \end{itemize}
\end{proposition}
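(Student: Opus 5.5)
Set $R=g\circ W$, $D(w)=\frac{1}{2\gamma}\|u-\gamma W^Tw\|^2+g^*(w)$ (the dual objective in \eqref{eq:dual_prox}), $P(x)=\frac12\|x-u\|^2+\gamma g(Wx)$ (the primal objective), and $x_k=u-\gamma W^Tw_k$. The plan is a primal–dual gap argument, as in the standard analysis of dual approximations of proximity operators.

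\textbf{Step 1: the duality gap controls the primal distance.} For every $x$ and $w$, the Fenchel–Young inequality $g(Wx)+g^*(w)\ge\langle Wx,w\rangle$ gives
\[
P(x)\ge \tfrac12\|x-u\|^2+\gamma\langle x,W^Tw\rangle-\gamma g^*(w).
\]
Minimizing the right-hand side in $x$ gives exactly $\frac12\|u\|^2-\gamma D(w)$. Under Assumption~\ref{ass: standard}, item 3, there is no duality gap, so $\min P=\frac12\|u\|^2-\gamma\min D$. Since $P$ is $1$-strongly convex with minimizer $\hat x=\prox_{\gamma R}(u)$, we have $\frac12\|x-\hat x\|^2\le P(x)-P(\hat x)$.

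\textbf{Step 2: bound the primal suboptimality of $x_k$ by the dual suboptimality of $w_k$.} A direct bound $P(x_k)-P(\hat x)\le \gamma(D(w_k)-\min D)$ fails, because $g(Wx_k)$ may be $+\infty$. I would therefore avoid $P$ itself and use a sharper comparison. Take a dual optimizer $\hat w$; optimality gives $\hat x=u-\gamma W^T\hat w$. Then
\[
\gamma\big(D(w_k)-D(\hat w)\big)=\tfrac12\|x_k\|^2-\tfrac12\|\hat x\|^2+\gamma\big(g^*(w_k)-g^*(\hat w)\big).
\]
By the optimality condition, $W\hat x\in\partial g^*(\hat w)$, so
\[
g^*(w_k)-g^*(\hat w)\ge\langle W\hat x,w_k-\hat w\rangle=\gamma^{-1}\langle \hat x,\hat x-x_k\rangle .
\]
Substituting, $\gamma(D(w_k)-D(\hat w))\ge\frac12\|x_k-\hat x\|^2$. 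Since $D(w_k)\to\min D$, this proves the first item.

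\textbf{Step 3: the type $2$ condition of Definition~\ref{def:type_2_approx}.} We need $\gamma^{-1}(u-x_k)=W^Tw_k\in\partial_{\epsilon_k}R(x_k)$ with $\epsilon_k\to0$. For any $x$, Fenchel–Young gives
\[
R(x)\ge\langle Wx,w_k\rangle-g^*(w_k)=R(x_k)+\langle x-x_k,W^Tw_k\rangle-\epsilon_k,
\]
where
\[
\epsilon_k:=g(Wx_k)+g^*(w_k)-\langle Wx_k,w_k\rangle\ge0 .
\]
The key identity I would verify is $\gamma\epsilon_k=P(x_k)-\big(\tfrac12\|u\|^2-\gamma D(w_k)\big)$. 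Expanding with $x_k=u-\gamma W^Tw_k$ should give this by routine algebra. So $\gamma\epsilon_k$ is exactly the duality gap at the pair $(x_k,w_k)$.

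\textbf{Main obstacle.} The difficulty is showing that this gap tends to zero. The dual part $\gamma(D(w_k)-\min D)\to0$ by assumption. The primal part $P(x_k)-\min P$ requires $g(Wx_k)\to g(W\hat x)$, but $g$ is only lower semicontinuous. The type $2$ statement is meaningful only when $\epsilon_k<\infty$, i.e.\ $Wx_k\in\dom g$. For the regularizers of interest (TV, $\ell^1$ with finite-valued $g$), $g$ is finite and continuous, so $\epsilon_k\to0$ follows from Step 2. In general I would state the result as: $x_k$ is a type $2$ approximation with precision $\epsilon_k$ equal to $\gamma^{-1}$ times the duality gap, citing \cite{IML_FISTA} for the vanishing of that precision.
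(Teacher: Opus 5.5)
The paper gives no argument of its own here; its proof is only a citation of \cite{IML_FISTA}, Proposition 3.5. Your primal--dual gap argument is therefore a genuinely different, self-contained route, and it is correct. Step 2 is the strongest part. The subgradient inequality $g^*(w_k)-g^*(\hat w)\ge\langle W\hat x,w_k-\hat w\rangle$ at a dual minimizer gives the quantitative bound $\|x_k-\hat x\|^2\le 2\gamma\bigl(D(w_k)-\min D\bigr)$. This proves the first item without ever evaluating $g$ at $Wx_k$, so the strong-convexity bound in Step 1 is never used.

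Two points should be made explicit:
\begin{itemize}
\item The existence of $\hat w$. Dual attainment comes, together with strong duality, from Fenchel--Rockafellar under Assumption~\ref{ass: standard}, item 3.
\item Why $u-\gamma W^T\hat w=\prox_{\gamma g\circ W}(u)$. From $0\in\partial D(\hat w)$ you get $\hat w\in\partial g(W\hat x)$, hence $\gamma^{-1}(u-\hat x)=W^T\hat w\in\partial(g\circ W)(\hat x)$. Alternatively, invoke Lemma~\ref{lemma:primal_dual_NPD}(ii).
\end{itemize}
Your identity $\gamma\epsilon_k=P(x_k)-\bigl(\tfrac12\|u\|^2-\gamma D(w_k)\bigr)$ checks out. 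It shows what the citation leaves implicit: the precision of the type 2 approximation is $\gamma^{-1}$ times the duality gap at $(x_k,w_k)$.

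What your route buys over the citation is that it exposes a hypothesis the statement omits. Here you should be firmer than ``citing \cite{IML_FISTA} for the vanishing'': for a general lower semicontinuous $g$ the second item is false, so no reference can supply it. Take $d=d'=1$, $W=1$, $g=\iota_{(-\infty,0]}$ (which satisfies the relative-interior condition) and $u=\gamma=1$. Then $g^*=\iota_{[0,\infty)}$, $\hat w=1$ and $\hat x=0$. The sequence $w_k=1-1/k$ is minimizing for \eqref{eq:dual_prox}, since $D(w_k)=\tfrac{1}{2k^2}\to 0$. However, $x_k=1/k\notin\dom(g\circ W)$ for every $k$, so no $x_k$ is a type 2 approximation with any finite precision, even though $x_k\to\hat x$.

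The correct statement needs an extra assumption, for example $g$ finite-valued, hence continuous. In that case your argument gives $\epsilon_k\to 0$ directly from $x_k\to\hat x$ and strong duality. This covers both the $\ell^1$-framelet and the TV regularizers actually used in the paper. Your restricted version is therefore the right statement to prove, not a weakening of it.
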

\begin{proof}
    See \cite{IML_FISTA}, Proposition 3.5.
\end{proof}

The convergence guarantees of the overall algorithm are directly related to the accuracy with which the proximity operator is approximated at each iteration. Typically, a smaller approximation error requires a greater computational effort, which is why some approaches use a fixed budget of inner iterations to estimate the proximity operator. This choice, however, comes at the price of limited accuracy and may eventually lead to divergence when a large number of outer iterations are performed.

In \cite{IML_FISTA}, the authors propose a heuristic strategy based on a tolerance parameter controlling the relative distance between two consecutive inner iterates used in the approximation of the proximity operator. Whenever no decrease of the objective function is observed over two consecutive iterations, the tolerance is reduced, thereby enforcing a more accurate computation of the proximal point.

In our numerical experiments, we choose to keep the number of inner iterations fixed when computing the proximity operator. More precisely, we perform only one inner iteration. This choice is motivated by the results in \cite{PNPD,NPD}, where the NPD and PNPD methods do not exhibit any substantial improvement when the number of nested iterations is increased. We also empirically verified this behavior by testing a larger number of inner iterations. Although, in principle, this should yield a more accurate approximation of the proximity operator, in all our experiments, the results remain essentially unchanged compared with the case in which only one inner iteration is performed.

\section{Experimental setup}\label{sec:exp_set}
In this section, we describe the setting and the implementation details of the multilevel framework used in our numerical experiments.

\paragraph{Variational model}
In the numerical experiments, we consider image problems 
of the form \eqref{eq:Model_Equation} solving the optimization problem \eqref{eq:problem_leastsquares}.

We consider two different regularization terms. In the first case, we take $g(Wu) = \|Wu\|_1$, where $W\in\mathbb{R}^{3d\times d}$ is the framelet operator defined in Appendix~\ref{app:framelets}. In this case, the optimization problem \eqref{eq:problem_leastsquares} takes the form
\begin{equation}\label{eq:exact_case_model}
    \underset{u\in\mathbb{R}^d}{\operatorname{argmin}} \ \frac{1}{2} \|Au-b^{\delta}\|_2^2+\lambda\|Wu\|_1.
\end{equation}
This type of variational problem can be efficiently addressed by exact proximal gradient methods such as FISTA and ITTA, since the proximity operator for the regularization term can be computed in closed form. For this reason, in the numerical experiments, we analyze the performance of the multilevel framework when FISTA or ITTA are used as solvers at both the fine and the coarse levels.

The second regularization term we consider is the total variation (TV) functional \cite{Rudin-Osher-Fatemi-1992}, that is
\begin{equation}\label{eq:inexact_case_model}
     \underset{u\in\mathbb{R}^d}{\operatorname{argmin}} \ \frac{1}{2}\|Au-b^{\delta}\|_2^2+\lambda TV(u).
\end{equation}
In this case, the proximity operator of the TV does not admit a closed-form expression, and therefore only approximate solutions of the optimization problem \eqref{eq:inexact_case_model} can be computed by means of inexact proximal gradient methods such as NPD and PNPD, see \Cref{sec:inex_prox_methods}. Also in this setting, we analyze the performance achieved by the multilevel approach when inexact proximal gradient methods are used as solvers at both the fine and coarse levels, and compare it with the corresponding plain strategy.

To this end, we conclude this preliminary part by recalling that the TV functional can be written in the form $g\circ W$, where
\begin{itemize}
  \item \(W : \mathbb{R}^d \to \mathbb{R}^{2d}\) is the discrete gradient operator,
        \[
            Wu = 
            \begin{bmatrix}
                D_x u \\
                D_y u
            \end{bmatrix},
        \]
        with \(D_x\) and \(D_y\) denoting the finite-difference operators in the horizontal
        and vertical directions, respectively.
  \item \(h : \mathbb{R}^{2d} \to \mathbb{R}\) is the convex function
        \[
            h(z) = \sum_{j=1}^{d} \lVert z_j \rVert_2,
        \]
        where \(z_j \in \mathbb{R}^2\) is the two-dimensional gradient vector at pixel \(j\).
\end{itemize}

\begin{remark}\label{rem:beta}
    The choice of $\beta$ in the NPD and PNPD algorithms must be consistent with \Cref{alg:npd,alg:pnpd}. In the TV case, it can be shown that $\beta = \frac{1}{8}$ satisfies the requirement $0<\beta<\frac{2}{\|W\|^2}$.
\end{remark}

\paragraph{Transfer operators}

As mentioned in \Cref{sec:coarse_model_trans_operators}, a crucial ingredient of multilevel methods is the ability to transfer information between different levels. From a theoretical point of view, the only assumption required is that the transfer operators be coherent according to Definition~\ref{def:CIT}. In this section, we describe a classic choice of CIT transfer operators based on linear B-splines.

The restriction operator is the full-weighting associated with the stencil
\[\mathcal{R} = \frac{1}{16}
\begin{bmatrix}
1 & 2 & 1 \\
2 & 4 & 2 \\
1 & 2 & 1
\end{bmatrix},
\]
which is centered on the target pixel and computes a weighted average of the neighboring pixels.

To define the coarser problem, we combine the linear operator associated with the stencil $\mathcal{R}$ with the downsampling operator. Consequently, the resulting restriction operator can be written as
\begin{align*}
I_h^H& = \frac{1}{16}
\begin{pmatrix}
2 & 1 & 0 & \cdots &  && 0\\
0 & 1 & 2 & 1 & 0 & \cdots & 0 \\
\vdots & & \ddots & \ddots & \ddots & & \vdots \\
0 & \cdots & & 0 & 1 & 2& 1 \\
\end{pmatrix}
\otimes
\begin{pmatrix}
2 & 1 & 0 & \cdots &  && 0\\
0 & 1 & 2 & 1 & 0 & \cdots &0 \\
\vdots & & \ddots & \ddots & \ddots & & \vdots \\
0 & \cdots & & 0 & 1 & 2& 1 \\
\end{pmatrix},
\end{align*}
where $\otimes$ denotes the Kronecker product.
Note the similarity with the matrix $W_{00}$ in Appendix~\ref{app:framelets}.

Correspondingly, the stencil associated with the prolongation operator is defined as
\[
\mathcal{P} = \frac{1}{4}
\begin{bmatrix}
1 & 2 & 1 \\
2 & 4 & 2 \\
1 & 2 & 1
\end{bmatrix}
= 4 \mathcal{R}^T,
\]
which defines the bilinear interpolation operator
\begin{align*}
I_H^h &= 4(I_h^H)^T.
\end{align*}
The pair $(I_h^H, I_H^h)$ provides a simple and intuitive construction of CIT operators.

\paragraph{Coarse problem}
In Section \ref{sec:coarse_model_trans_operators}, we described how to properly define the variational problem at coarser levels starting from the fine-level formulation. When dealing with image deblurring problems, a further crucial aspect is the definition of the blurring operator $A$ on the coarser grids. To this end, we briefly describe below the strategy adopted in our numerical experiments.

We consider the two-level setting and let 
\[d = d_h > d_H=\frac{d}{4},\]
so that $I_h^H \in \mathbb{R}^{d_H \times d_h}$ and $I_H^h \in \mathbb{R}^{d_h \times d_H}$.
Then, we denote by $A_h=A$ the blurring operator at the fine level, and we construct the coarse-level operator according to the Galerkin approach, as
\begin{equation}\label{eq:A_H}
A_H=I_h^H A_hI_H^h.
\end{equation}
The interpretation of this formulation for $A_H$ is that applying $A_H$ to a vector is equivalent to interpolating the vector to the finer level, applying the operator $A_h$, and then restricting the result to the coarser level.

\paragraph{Computational cost}
In the numerical experiments, we compare the standard methods FISTA, ISTA, NPD, and PNPD with their multilevel counterparts. To ensure a fair comparison, we explicitly analyze the computational cost in all cases.

Since the coarse problem has size $d/4$, estimates suggest that four iterations at the coarse level are approximately equivalent, in terms of floating-point operations, to one iteration at the fine level. 
For this reason, the number of coarse iterations in the multilevel methods is chosen to be a multiple of four. In particular, in the plots showing the error versus the iteration count, one multilevel iteration with four coarse iterations is compared with one fine-level iteration, whereas one multilevel iteration with eight coarse iterations is compared with two fine-level iterations. For a comprehensive analysis of the computational cost, we refer the reader to Appendix \ref{app:computational_cost}.

\section{Numerical results}\label{sec:num_res} 
All numerical tests were carried out in MATLAB 25.2.0.2998904 (R2025b) on a laptop running Windows~11 Home (version~25H2), equipped with an Intel Core i7-1065G7 processor (1.30~GHz) and 16~GB of RAM. The test problems were generated using the IRtools toolbox \cite{ITtools}.

For all algorithms, the initial guess is chosen as the observed image. The shifting parameter of the preconditioner is fixed at
$\nu = 0.1$ throughout all numerical experiments. In the multilevel framework, the number of levels is set to $2$, the Moreau parameter is chosen as $\gamma = 1.1$, and a total of $100$ iterations are performed. 

Since the coarse-level problem is better conditioned, the regularization parameter and the preconditioner shift at the coarse level are set to
$\lambda_H=\lambda_h/4$  and $\nu_H=\nu_h/4$,
respectively.
Moreover, periodic boundary conditions are assumed in the construction of the matrix $A$, so that all computations can be performed by means of the Fast Fourier Transform (FFT).

We tested several values of $m$, corresponding to the number of coarse-grid iterations, and of $p$, corresponding to the number of V-cycles. Among the various numerical experiments performed, the choice $m = 8$ and $p = 8$ provided a good compromise across all examples, independently of the blur and noise levels. Therefore, in the multilevel framework (Algorithm~\ref{alg:multilevel}), we perform $8$ iterations on the coarse grid for $8$ V-cycles. No intermediate fine-level iterations after a V-cycle iteration are performed, since our experiments indicate that they slow down convergence.

Our main goal is to show that the multilevel and preconditioned algorithms are more efficient than their non-multilevel and non-preconditioned counterparts for solving deblurring problems, without compromising the quality of the computed reconstructions, which remains comparable across all algorithms.
The performance of the methods is assessed using two metrics.
The first one is the \textit{Relative Reconstruction Error} (RRE), defined as
\begin{equation}
    \label{eq:RRE}
    \text{RRE}(u_{\text{rec}}) = \frac{\|u_{\text{true}} - u_{\text{rec}}\|}{\|u_{\text{true}}\|},
\end{equation}
where \(u_{\text{rec}}\) denotes the reconstructed image and \(u_{\text{true}}\) the ground-truth image.
The second metric is the \textit{Peak Signal-to-Noise Ratio} (PSNR), defined as 
\begin{equation}
    \label{eq:PSNR}
    \text{PSNR}(u_{\text{rec}})= 10\log_{10} \left(\frac{1}{\|u_{\text{true}} - u_{\text{rec}}\|^2}\right),
\end{equation}
since our gray-scaled images are scaled in $[0,1]$. Higher PSNR values correspond to more accurate reconstructions.

\subsection{Exact case}\label{subsec:ex_num_res} 
This section focuses on the solution of the minimization problem \eqref{eq:exact_case_model}, where the proximity operator can be computed in closed form. In the numerical experiments, we compare the standard proximal methods ITTA and FISTA with their multilevel counterparts, namely MITTA and MM\_FISTA. In addition, we test the automatic strategy presented in \Cref{sec:extension_multilevel} for estimating the optimal number of coarse iterations and V-cycles. The multilevel method based on this adaptive strategy is called MITTA\_auto.

To ensure convergence of the methods, at each level we set the step-length parameter $\alpha = 1$ for ITTA, while for FISTA we adopt the classic choice $\alpha = 1/L$, where $L$ denotes the Lipschitz constant of the gradient of the data-fidelity term.

To test the different methods, we consider a grayscale satellite image of $512~\times~512$ pixels, blurred by a Gaussian PSF. The final observed image $b^{\delta}$ is obtained by adding $2\%$ white Gaussian noise $\eta_\delta$, that is,
\[
\|\eta_\delta\| = 0.02\|b\|.
\]
\subsubsection{Example 1}
In this first example, the PSF is of $35\times 35$ pixels. Figure~\ref{fig:sat_GaussMedium} shows the ground-truth image, the PSF, and the corresponding observed image. 

\begin{figure}
    \centering
    \makebox[\textwidth][c]{
        \begin{subfigure}{0.25\textwidth}
            \centering
            \includegraphics[width=\textwidth]{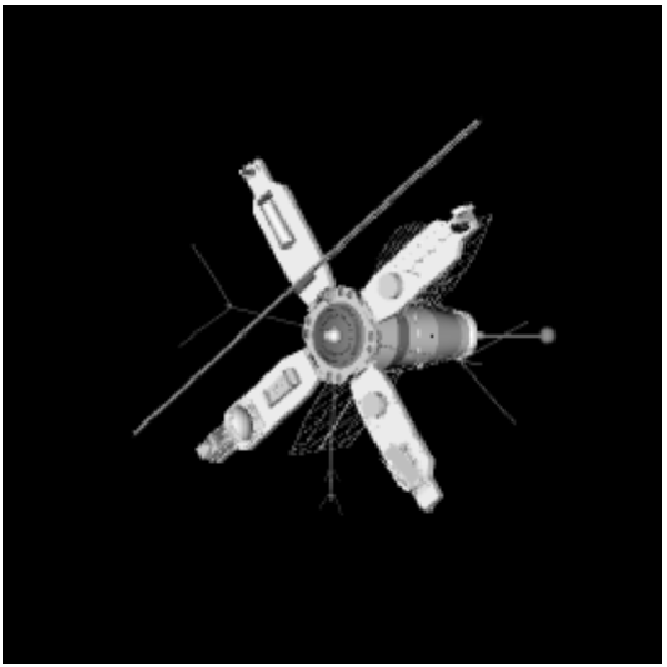}
            \caption{}\label{fig:sat_groundtruth}
        \end{subfigure}
        \begin{subfigure}{0.25\textwidth}
            \centering
            \includegraphics[width=\textwidth]{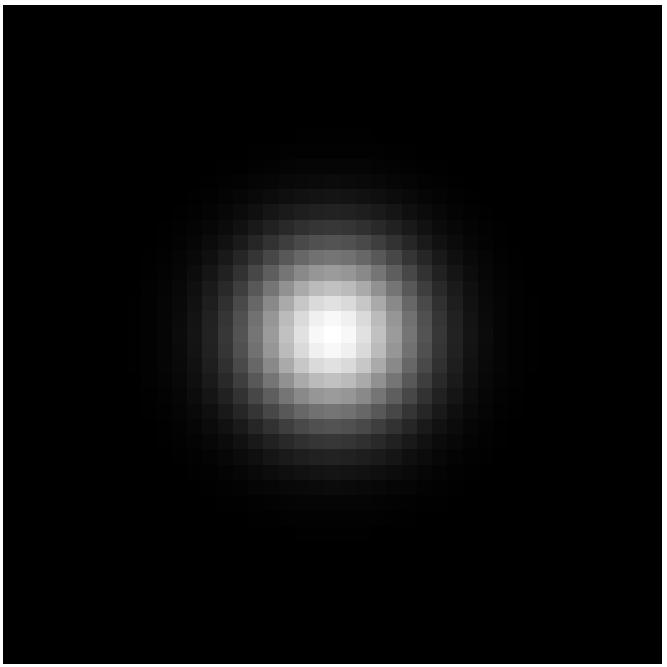}
            \caption{}\label{fig:psf_GaussMedium}
        \end{subfigure}
        \begin{subfigure}{0.25\textwidth}
            \centering
            \includegraphics[width=\textwidth]{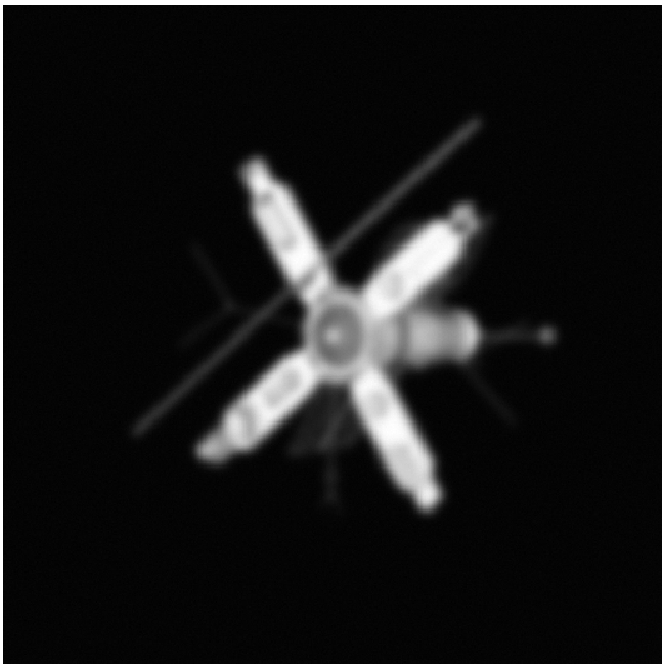}
            \caption{}\label{fig:b_GaussMedium}
        \end{subfigure}
    }
    \caption{Example 1: (\subref{fig:sat_groundtruth}) True image, (\subref{fig:psf_GaussMedium}) Gaussian PSF of $35\times 35$ pixels, (\subref{fig:b_GaussMedium}) Observed image.}
    \label{fig:sat_GaussMedium}
\end{figure}

The regularization parameter is $\lambda = 3 \cdot 10^{-4}$ for FISTA and MM\_FISTA, while $\lambda = 2 \cdot 10^{-3}$ for ITTA, MITTA, and MITTA\_auto.
MITTA\_auto performs $8$ coarse iterations and $3$ V-cycles. 

\begin{figure}
     \centering
     \begin{overpic}[width=.6\linewidth]{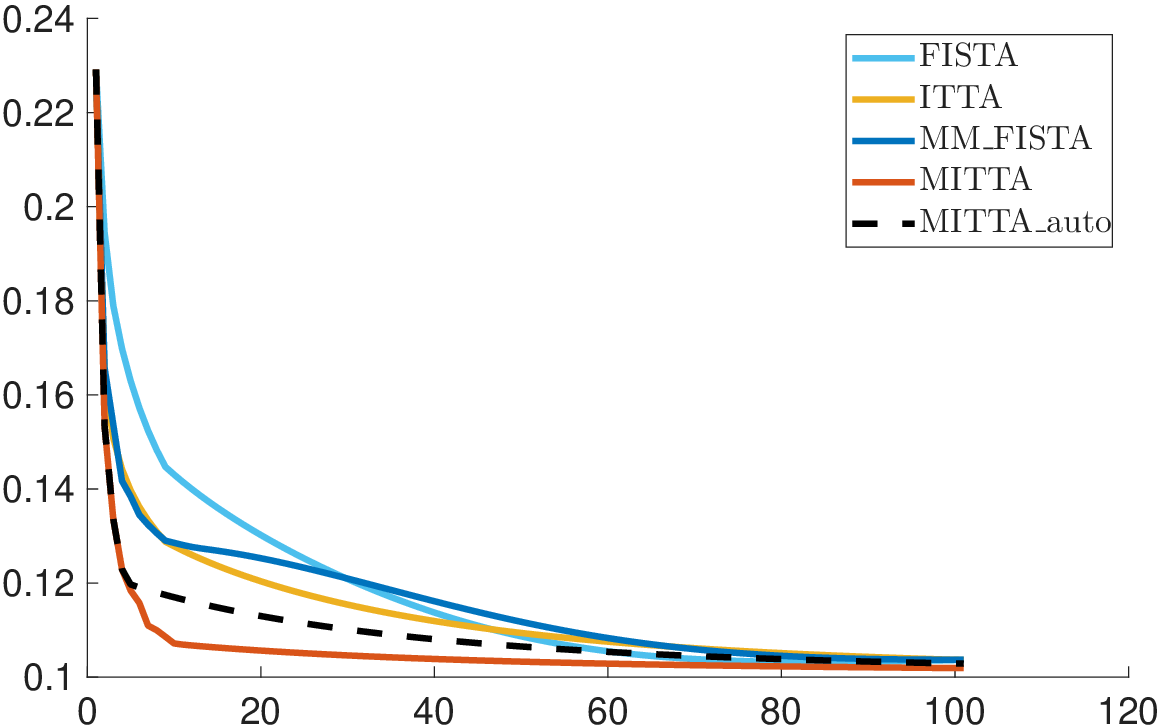}
        \put(32,-6){Number of iterations} 
        \put(-7,28){\rotatebox{90}{RRE}} 
    \end{overpic}
    \vspace{8mm}
     \caption{Example 1: comparison of the RREs varying the number of iterations for FISTA, ITTA, and their multilevel versions.}
     \label{fig:err_sat_GaussMedium}
\end{figure}

Figure~\ref{fig:err_sat_GaussMedium} shows the RRE values obtained by the considered methods, varying the number of iterations, and plotted according to the previous cost analysis. We observe that, in terms of iteration count, both MITTA and MITTA\_auto converge faster than ITTA. In particular, the initial coarse-grid iterations already provide a significant improvement of the approximation. Moreover, thanks to the presence of the preconditioner $P$, both ITTA and its multilevel counterpart show a faster decrease of the RRE than FISTA and MM\_FISTA.
Although the automatic multilevel procedure is slightly slower than the multilevel version in which the number of coarse iterations and V-cycles is manually optimized, it still outperforms all the other methods. Since it avoids the need to tune two additional parameters, it provides a competitive and robust alternative in practice.

To better emphasize the acceleration of the convergence in the first iterations, Table~\ref{tab:ex_1} shows the PSNR for the considered methods after $4$, $15$, and $60$ iterations.
Note that, for instance, the reconstruction produced by MITTA after only $4$ V-cycles attains a higher PSNR than the ITTA reconstruction after $15$ iterations, confirming the improved performance of the multilevel approach.
This is further highlighted by the detail of the reconstructions shown in Figure~\ref{fig:dett_satMedium_ITTA}.
 
 \begin{table}
\centering
\begin{tabular}{>{\centering\arraybackslash}m{0.01\textwidth}
    >{\centering\arraybackslash}m{0.13\textwidth}
    >{\centering\arraybackslash}m{0.15\textwidth}
    >{\centering\arraybackslash}m{0.16\textwidth}
    >{\centering\arraybackslash}m{0.16\textwidth}
    >{\centering\arraybackslash}m{0.16\textwidth}} 
 Iter & \textcolor{fista}{FISTA} & \textcolor{itta}{ITTA} & \textcolor{mfista}{MM\_FISTA} & \textcolor{mitta}{MITTA} & \textcolor{black}{MITTA\_auto}\\[0.5ex] 
 \hline\\[-0.25cm]
 4 & 29.3752 & 30.7140 & 30.7939 & 32.1489 & 32.1001\\ 
 15 & 31.0242& 31.8205& 31.5673&33.0966&32.5803\\
 60 & 33.1690&32.9968&32.9448&33.3729&33.2069 \\ 
 \hline \\
\end{tabular}
\caption{Example 1: PSNR values obtained at different iterations with different methods.}
\label{tab:ex_1}
\end{table}

\begin{figure}
    \centering
    \begin{tabular}{ccc}
        \includegraphics[width=0.25\linewidth]{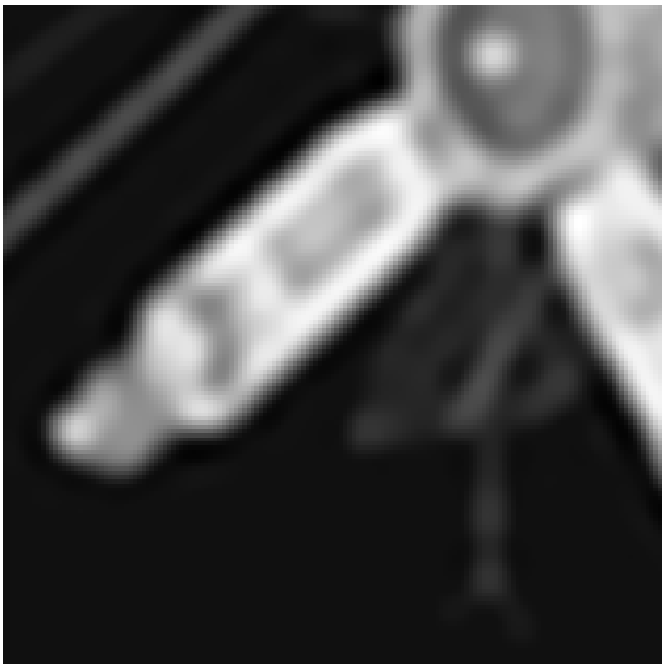} &
        \includegraphics[width=0.25\linewidth]{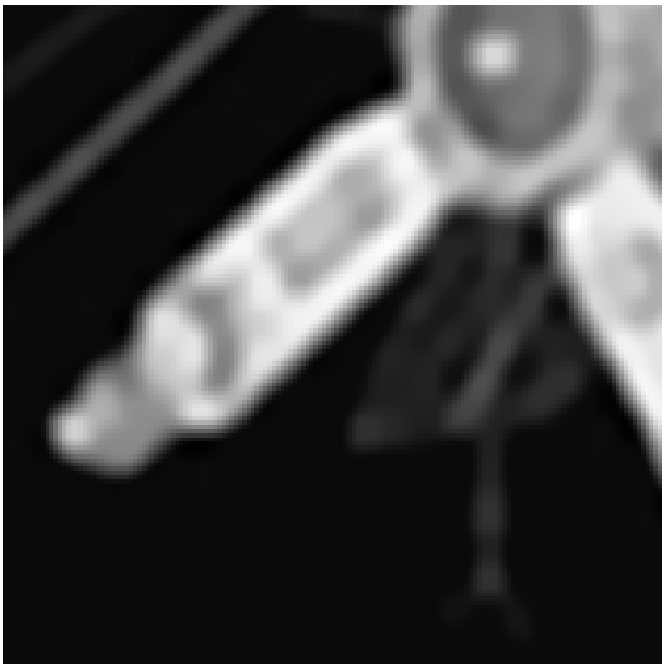} &
        \includegraphics[width=0.25\linewidth]{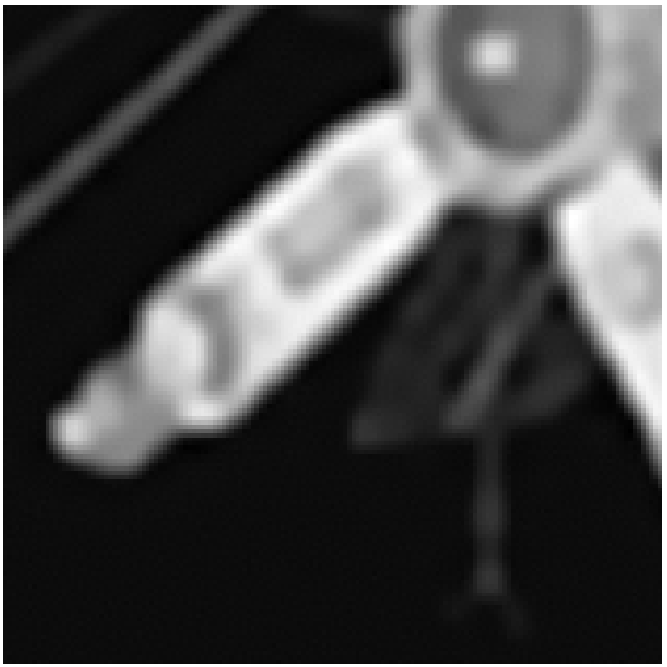}\\
        \text{\small 24.9971} & \text{\small 25.9777} & \text{\small 26.3410}
    \end{tabular}
    \caption{Example 1: detailed view of the reconstructed images. From left to right, we show the ITTA reconstruction after 4 iterations, after 15 iterations, and the MITTA reconstruction after 4 iterations. The corresponding PSNR values are reported below each image.}
    \label{fig:dett_satMedium_ITTA}
\end{figure}

\subsubsection{Example 2}
In this second example, we choose a more severe Gaussian PSF, namely, we consider a PSF of size $51\times 51$. Figure~\ref{fig:sat_GaussSevere} shows the ground-truth image, the corresponding PSF, and the corrupted observation used in this experiment.
\begin{figure}
    \centering
    \makebox[\textwidth][c]{
        \begin{subfigure}{0.25\textwidth}
            \centering
            \includegraphics[width=\textwidth]{Images/Satellite.eps}
            \caption{}
        \end{subfigure}
        \begin{subfigure}{0.25\textwidth}
            \centering
            \includegraphics[width=\textwidth]{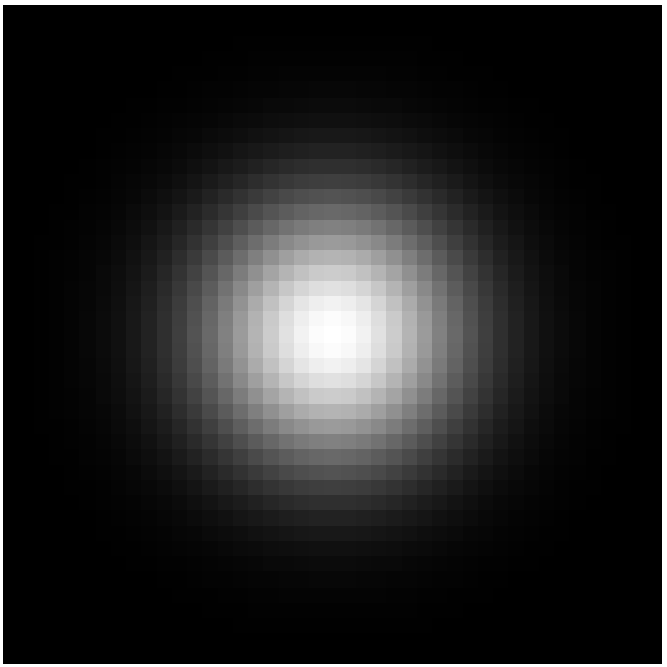}
            \caption{}\label{fig:psf_GaussSevere}
        \end{subfigure}
        \begin{subfigure}{0.25\textwidth}
            \centering
            \includegraphics[width=\textwidth]{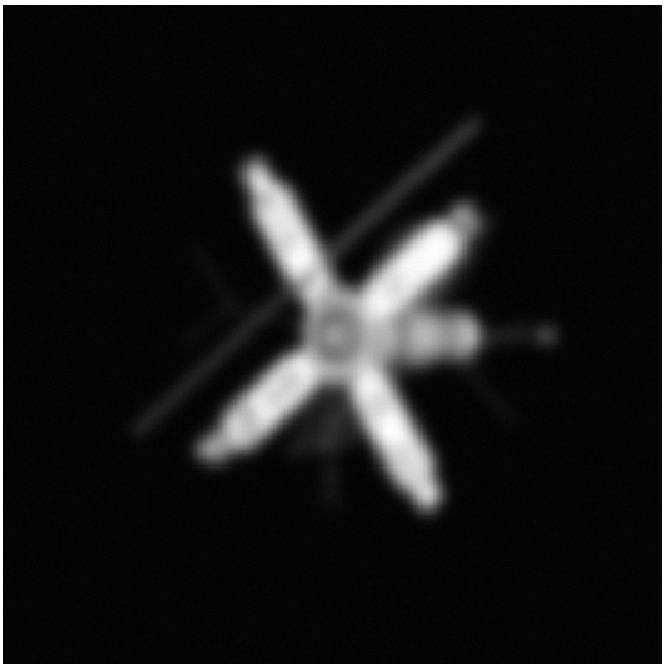}
            \caption{}\label{fig:b_GaussSevere}
        \end{subfigure}
    }
    \caption{Example 2:(\subref{fig:sat_groundtruth}) True image, (\subref{fig:psf_GaussSevere}) Gaussian PSF of dimension $51\times 51$, (\subref{fig:b_GaussSevere}) Observed image.}
    \label{fig:sat_GaussSevere}
\end{figure}

The regularization parameter is $\lambda = 5 \cdot 10^{-4}$ for FISTA and MM\_FISTA, while it is $\lambda = 3 \cdot 10^{-3}$ for ITTA, MITTA and MITTA\_auto.
The automatic strategy selects the same number of coarse iterations and V-cycles as in the previous example, namely $8$ coarse iterations and $3$ V-cycles.

\begin{figure}
     \centering
     \begin{overpic}[width=.6\linewidth]{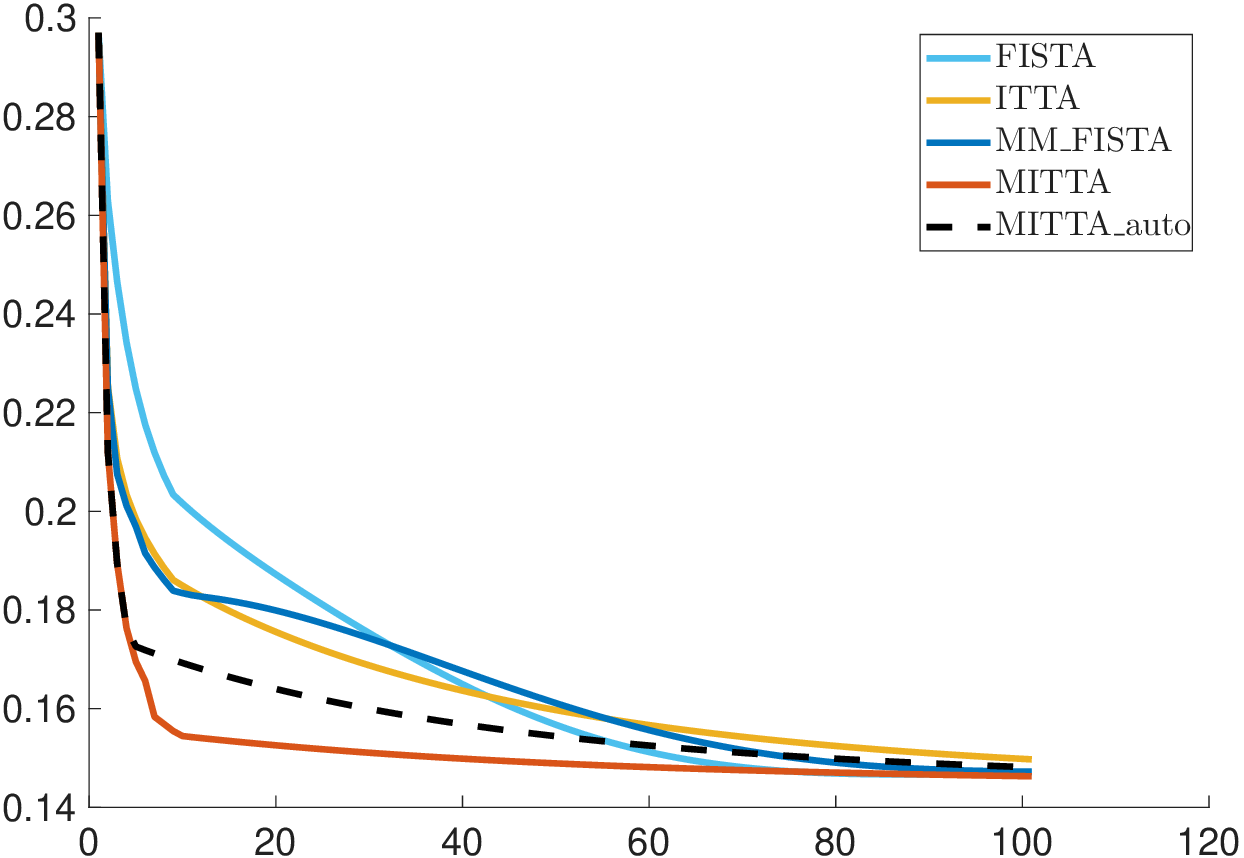}
        \put(32,-6){Number of iterations} 
        \put(-7,28){\rotatebox{90}{RRE}} 
    \end{overpic}
    \vspace{8mm}
     \caption{Example 2: comparison of the RREs varying the number of iterations for FISTA, ITTA, and their multilevel versions.}
     \label{fig:err_sat_GaussSevere}
\end{figure}

Figure~\ref{fig:err_sat_GaussSevere} reports the comparison in terms of RRE at each iteration. The plot clearly shows that the iterations performed on the coarse grid significantly accelerate convergence in the multilevel schemes.
As in the previous example, MITTA achieves the highest PSNR values among all the methods under comparison. 

Comparing our MITTA with MM\_FISTA, we note that the improvement in the reconstructed images is visible after just a few iterations. In fact, as highlighted in the zoomed-in view in Figure~\ref{fig:dett_satSevere_multilevel}, the satellite image appears sharper, particularly along the edges, when using MITTA rather than MM\_FISTA.

\begin{figure}
    \centering
    \includegraphics[width=0.25\linewidth]{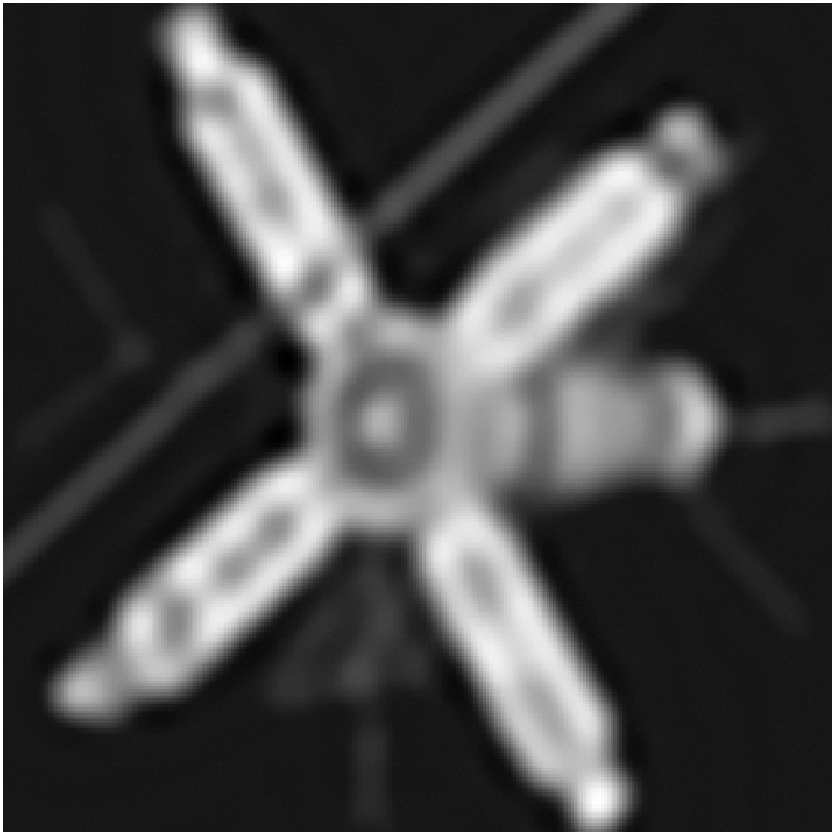}\qquad
    \includegraphics[width=0.25\linewidth]{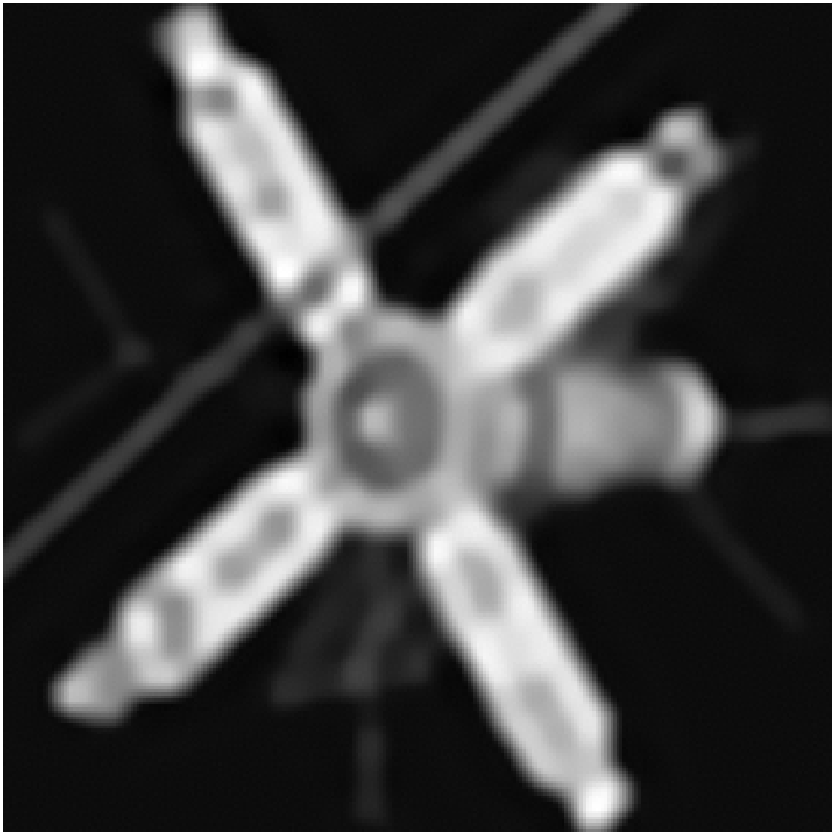}\\[0.1cm]
    \caption{Example 2: detailed view of the reconstructed images. MM\_FISTA reconstruction after 4 iterations (left) and MITTA reconstruction after 4 iterations (right).}
    \label{fig:dett_satSevere_multilevel}
\end{figure}

\subsection{Inexact case}\label{subsec:inex_num_res}
The present section focuses on the solution of the optimization problem \eqref{eq:inexact_case_model}, where the proximity operator is not available in closed form. In the numerical experiments, we compare the standard inexact proximal methods NPD and PNPD with their multilevel counterparts, namely MNPD and MPNPD. In addition, we also test the automatic strategy introduced in \Cref{sec:extension_multilevel} for estimating the optimal number of coarse iterations and V-cycles in the MPNPD method. The multilevel method based on this adaptive strategy is called MPNPD\_auto.

To guarantee convergence at both levels, we set the step length in the gradient-descent step equal to $\alpha = 1$ for PNPD and $\alpha = \frac{1}{L}$ for NPD, while for both algorithms, we choose $\beta = \frac{1}{8}$ according to the estimation in Remark \ref{rem:beta}.\\
In the NPD method, for the extrapolation sequence $\{\gamma_k\}$, we adopt the same strategy proposed in \cite{NPD}, denoted by $\{\gamma_k^{NPD}\}$ and defined as
\[\begin{cases}
    \gamma_0^{NPD} = 0\\
    \gamma_k^{NPD} = \min\left\{\gamma_k^{FISTA},\frac{C\rho_k}{\|u_k- u_{k-1}\|}\right\}, \qquad\forall k \geq \ 1,
\end{cases}\]
where $C > 0$ is a positive constant, set here to $C=100$, and $\{\rho_k\}_{k\in\mathbb{N}}$ is a fixed positive summable sequence chosen as
$\rho_k = \frac{1}{k^{1.1}}$,
as suggested in \cite{NPD}. The sequence $\gamma_k^{FISTA}$ corresponds to the classical FISTA extrapolation parameter, namely
\[\gamma_k^{FISTA} = \frac{t_k-1}{t_{k+1}}, \qquad
\mbox{where} \qquad 
\begin{cases}
    t_0 = 0,\\
    t_{k+1} = \frac{1+\sqrt{1+4t_k^2}}{2}, \qquad \forall k\geq 1.
\end{cases}\]

In the following experiments, we consider a $256 \times 256$ grayscale image of peppers, blurred with two different PSFs generated by the IRtools toolbox \cite{ITtools}, and contaminated with white Gaussian noise with intensity level equal to $1\%$.

\subsubsection{Example 3}
In this example, we consider a defocus PSF of size $31\times 31$. Figure~\ref{fig:pep_DefocusMedium} depicts the ground-truth image, the corresponding PSF, and the corrupted observation.

\begin{figure}
    \centering
    \makebox[\textwidth][c]{
        \begin{subfigure}{0.25\textwidth}
            \centering
            \includegraphics[width=\textwidth]{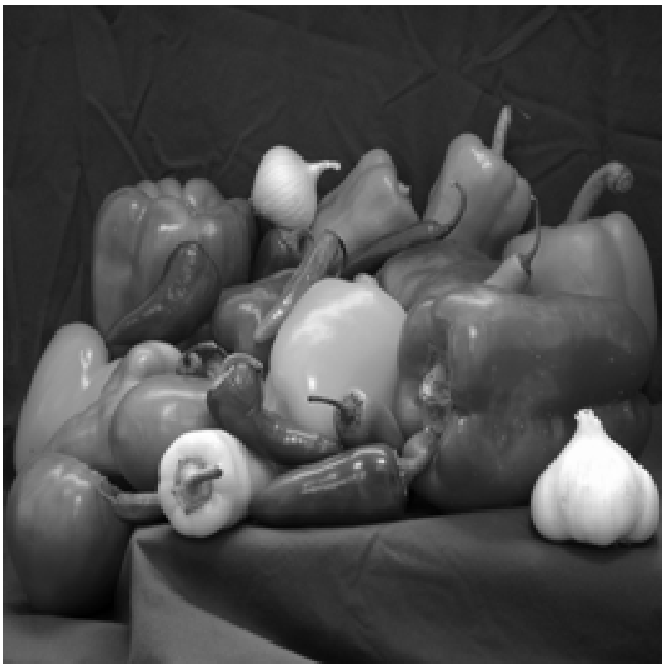}
            \caption{}\label{fig:pep_groundtruth}
        \end{subfigure}
        \begin{subfigure}{0.25\textwidth}
            \centering
            \includegraphics[width=\textwidth]{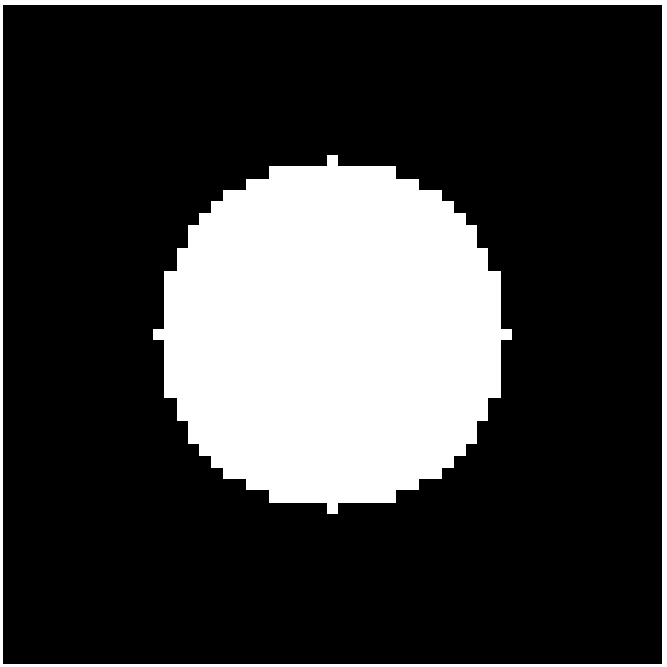}
            \caption{}\label{fig:psf_DefocusMedium}
        \end{subfigure}
        \begin{subfigure}{0.25\textwidth}
            \centering
            \includegraphics[width=\textwidth]{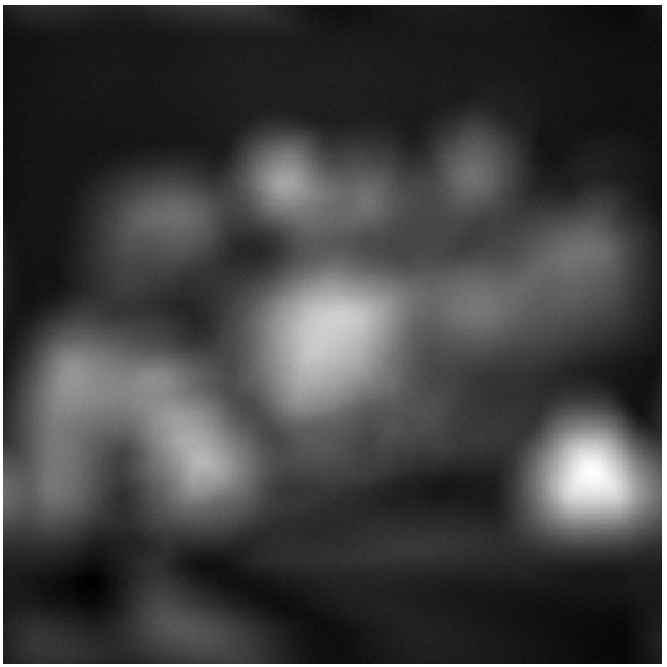}
            \caption{}\label{fig:b_DefocusMedium}
        \end{subfigure}
    }
    \caption{Example 3: (\subref{fig:pep_groundtruth}) true image,  (\subref{fig:psf_DefocusMedium}) defocus PSF of size $31\times 31$,  (\subref{fig:b_DefocusMedium}) observed image.}
    \label{fig:pep_DefocusMedium}
\end{figure}

The regularization parameter is chosen as $\lambda = 1 \cdot 10^{-4}$ for NPD and MNPD, and as $\lambda = 1 \cdot 10^{-3}$ for PNPD, MPNPD, and MPNPD\_auto.
In this experiment, the MPNPD\_auto algorithm performs $8$ coarse iterations for $19$ V-cycles, followed by $4$ coarse iterations for $38$ V-cycles.

\begin{figure}
     \centering
     \begin{overpic}[width=.6\linewidth]{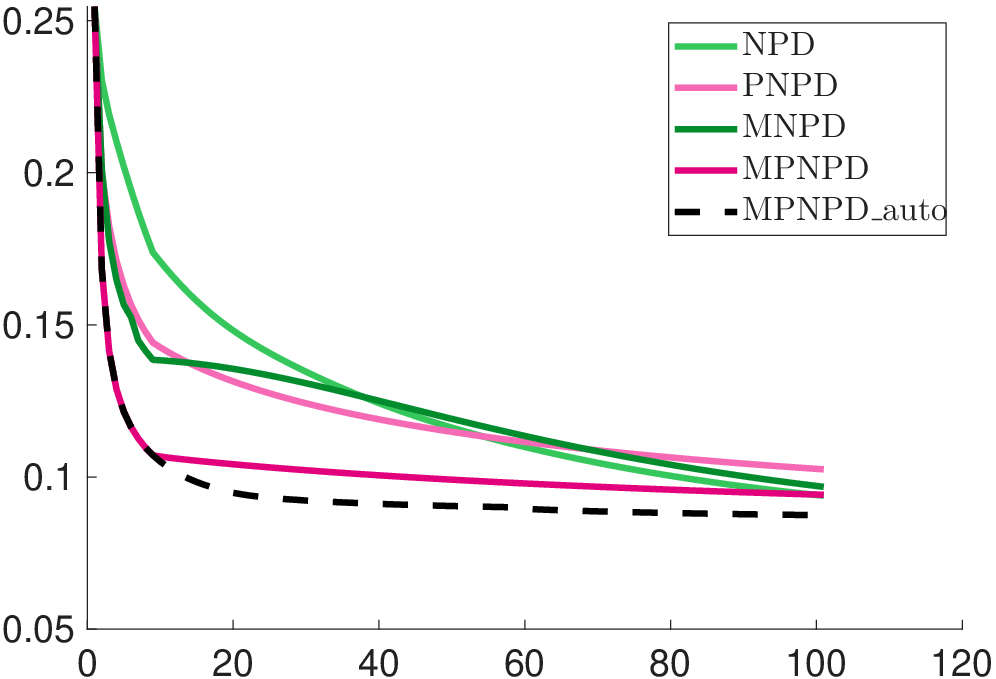}
        \put(32,-6){Number of iterations} 
        \put(-7,28){\rotatebox{90}{RRE}} 
    \end{overpic}
    \vspace{8mm}
     \caption{Example 3: comparison of the RRE varying the number of iterations for NPD, PNPD, and their multilevel versions.}
     \label{fig:err_pep_DefocusMedium}
\end{figure}

Figure~\ref{fig:err_pep_DefocusMedium} shows the comparison in terms of RRE for the five considered methods. The plot confirms that, also in the inexact case, the initial coarse-grid iterations performed by the multilevel methods significantly improve the overall performance. As for the MPNPD\_auto algorithm, increasing the number of V-cycles leads to a further reduction of the RRE; however, this improvement becomes only marginal after approximately the 20th iteration.

\begin{figure}
    \centering
	\begin{tabular}{>{\centering\arraybackslash}m{0.01\textwidth}
    >{\centering\arraybackslash}m{0.16\textwidth}
    >{\centering\arraybackslash}m{0.16\textwidth}
    >{\centering\arraybackslash}m{0.16\textwidth}
    >{\centering\arraybackslash}m{0.16\textwidth}
    >{\centering\arraybackslash}m{0.16\textwidth}}
    & \textcolor{npd}{NPD} & \textcolor{pnpd}{PNPD} & \textcolor{mnpd}{MNPD} & \textcolor{mpnpd}{MPNPD} & \textcolor{black}{MPNPD\_auto}\\
    \begin{tikzpicture}
    \node at (0,0) {};
    \node at (0,0.050\textwidth) {\rotatebox{90}{\textbf{Iter: 4}}};
    \end{tikzpicture} & \includegraphics[width=\linewidth]{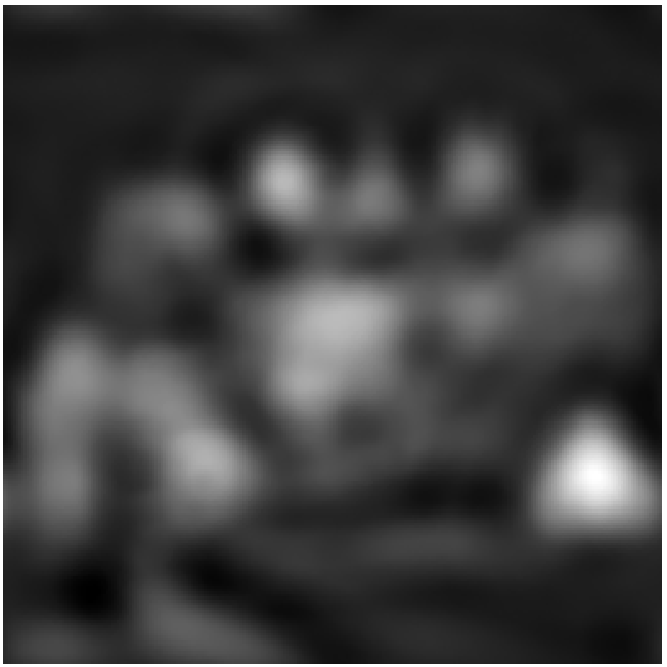}&
    \includegraphics[width=\linewidth]{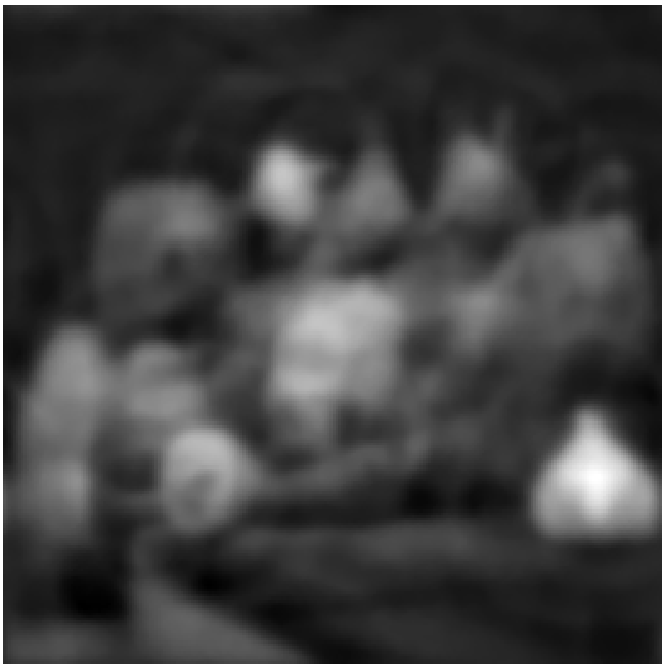} &
    \includegraphics[width=\linewidth]{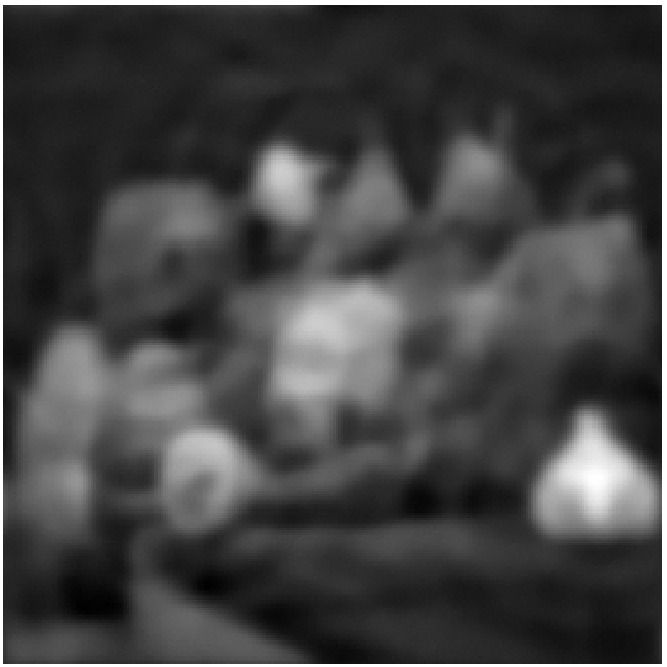}&
    \includegraphics[width=\linewidth]{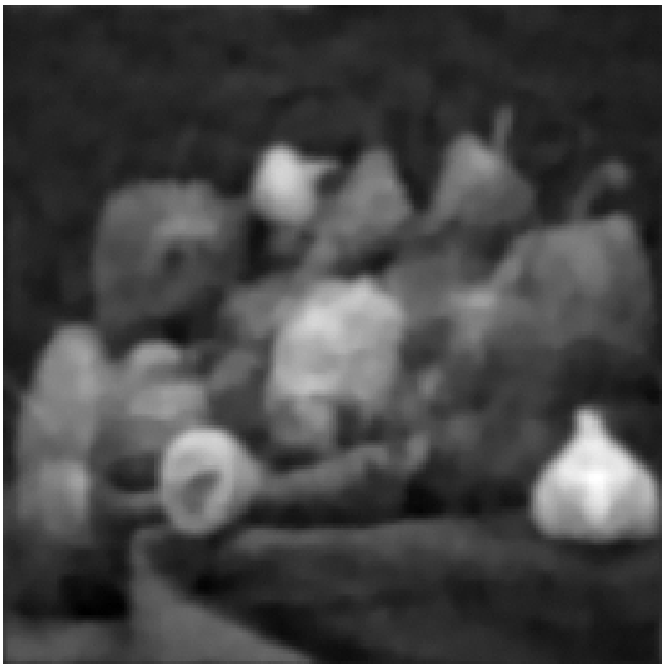} &
    \includegraphics[width=\linewidth]{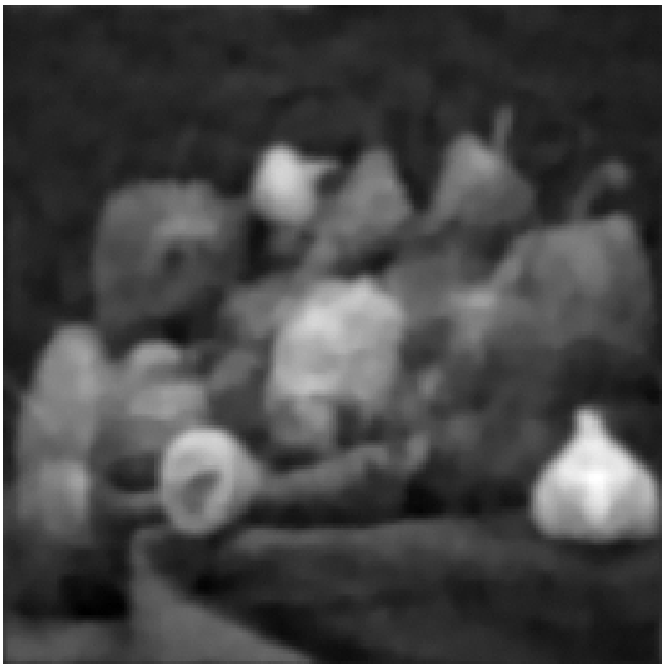}\\
    &\vspace*{-0.2cm}\text{\small 22.5627}&\vspace*{-0.2cm}\text{\small 24.4368}&\vspace*{-0.2cm}\text{\small 24.7746}&\vspace*{-0.2cm}\text{\small 26.9806}& \text{\small 26.9806}\\[0.2cm]
    \begin{tikzpicture}
    \node at (0,0) {};
    \node at (0,0.050\textwidth) {\rotatebox{90}{\textbf{Iter: 15}}};
    \end{tikzpicture}&\includegraphics[width=\linewidth]{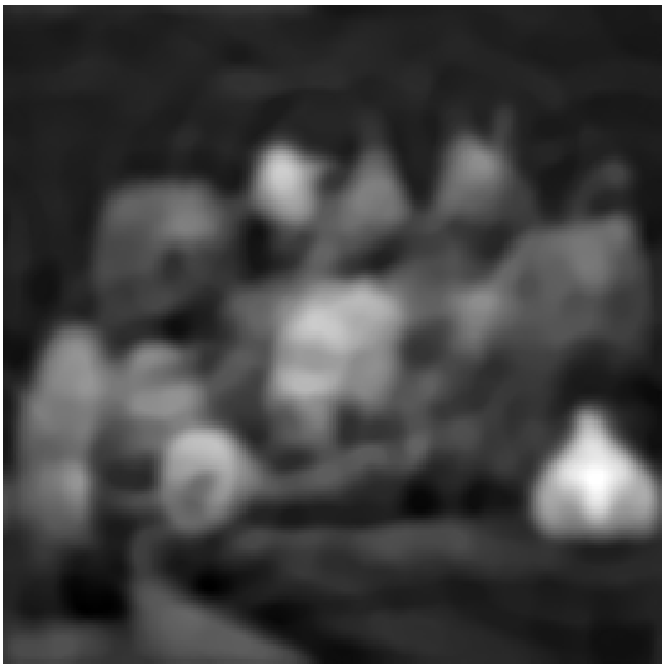}&
    \includegraphics[width=\linewidth]{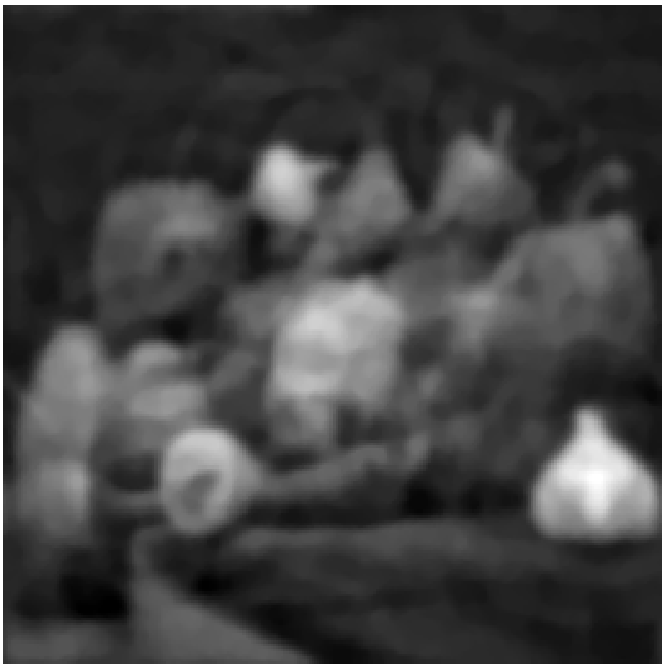} &
    \includegraphics[width=\linewidth]{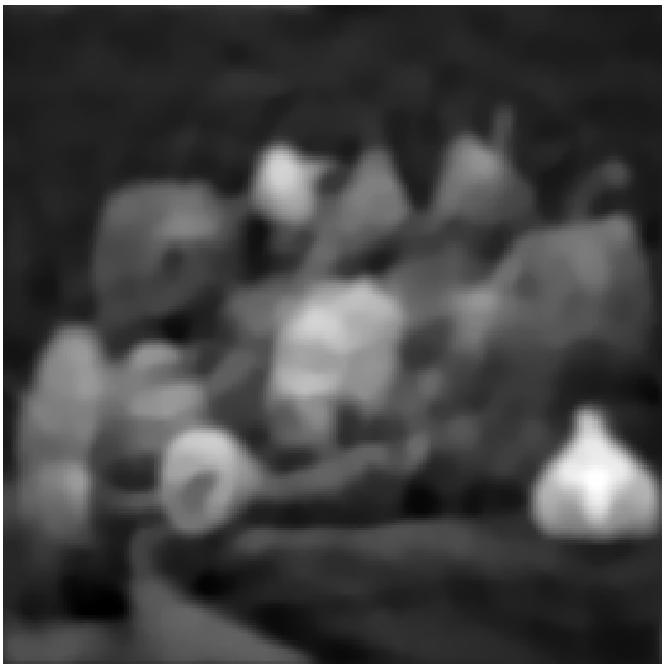}&
    \includegraphics[width=\linewidth]{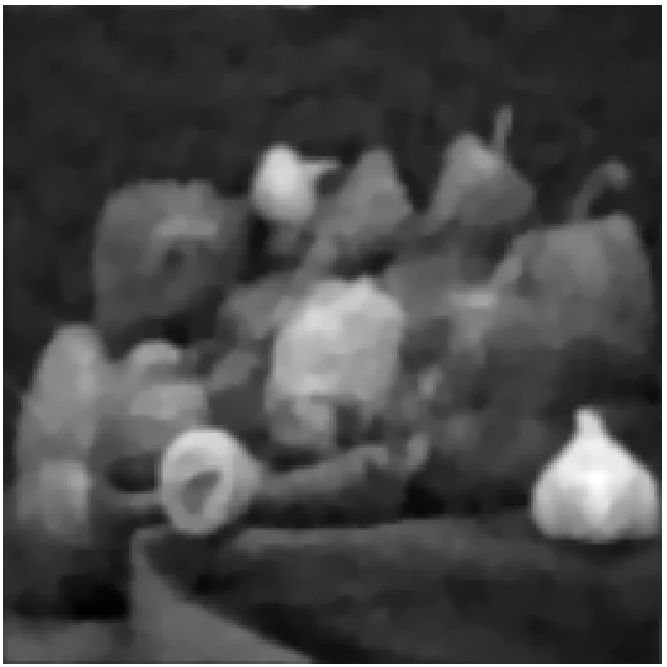}&
    \includegraphics[width=\linewidth]{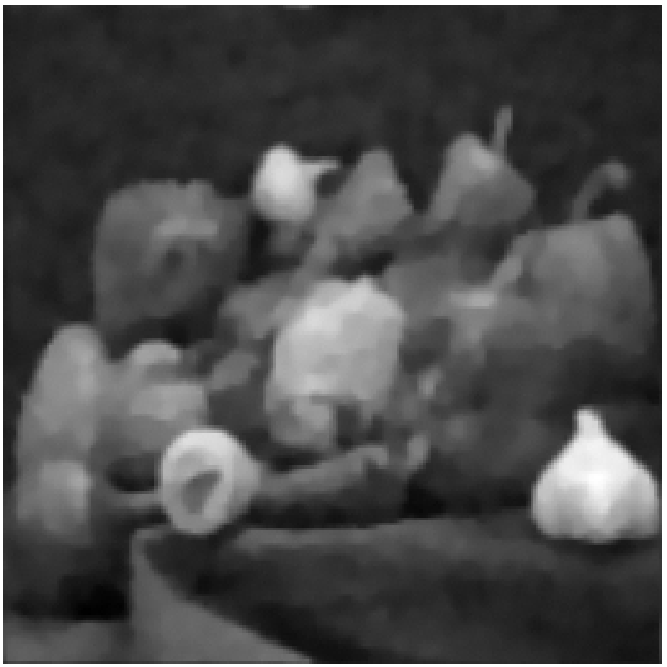}\\
    &\vspace*{-0.2cm}\text{\small 24.8281}&\vspace*{-0.2cm}\text{\small 26.0527}&\vspace*{-0.2cm}\text{\small 25.9426}&\vspace*{-0.2cm}\text{\small 28.2409}& \text{\small 28.8953}\\[0.2cm]
   \begin{tikzpicture}
    \node at (0,0) {};
    \node at (0,0.050\textwidth) {\rotatebox{90}{\textbf{Iter: 60}}};
    \end{tikzpicture}&\includegraphics[width=\linewidth]{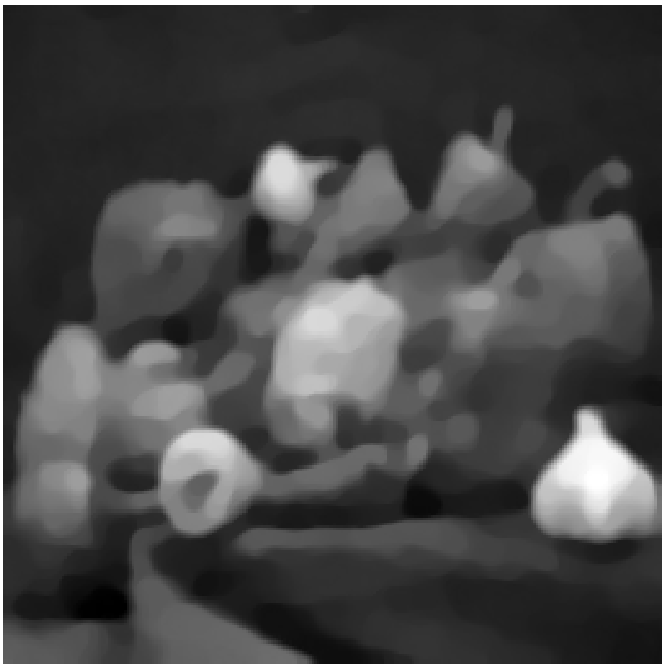}&
    \includegraphics[width=\linewidth]{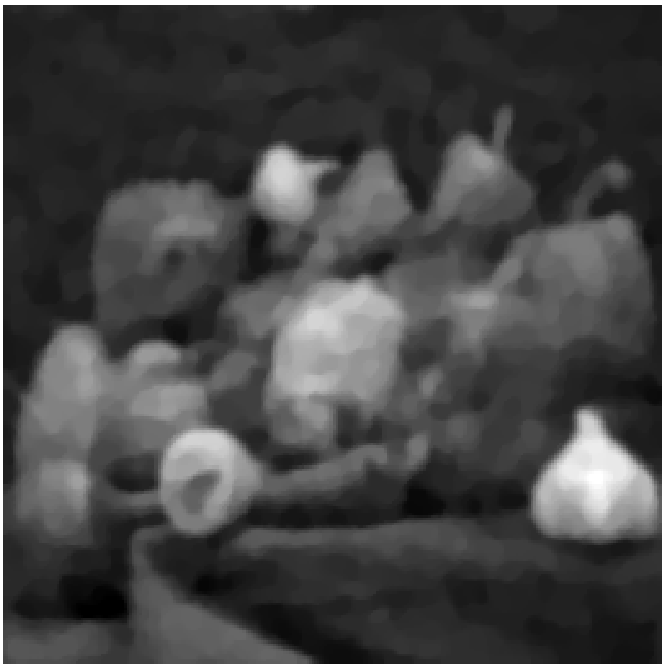} &
    \includegraphics[width=\linewidth]{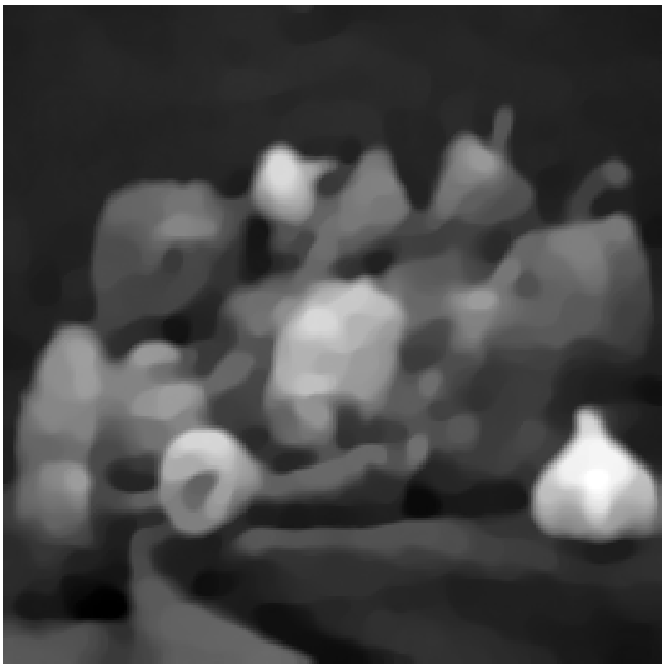}&
    \includegraphics[width=\linewidth]{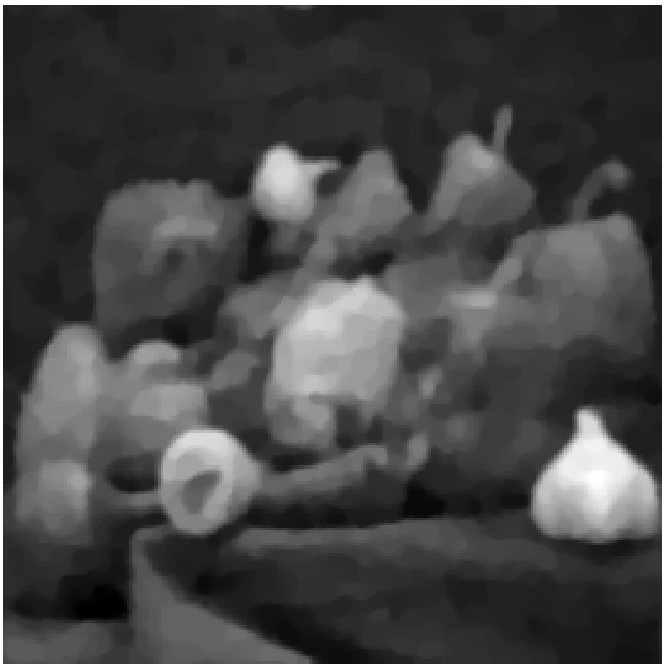}&
    \includegraphics[width=\linewidth]{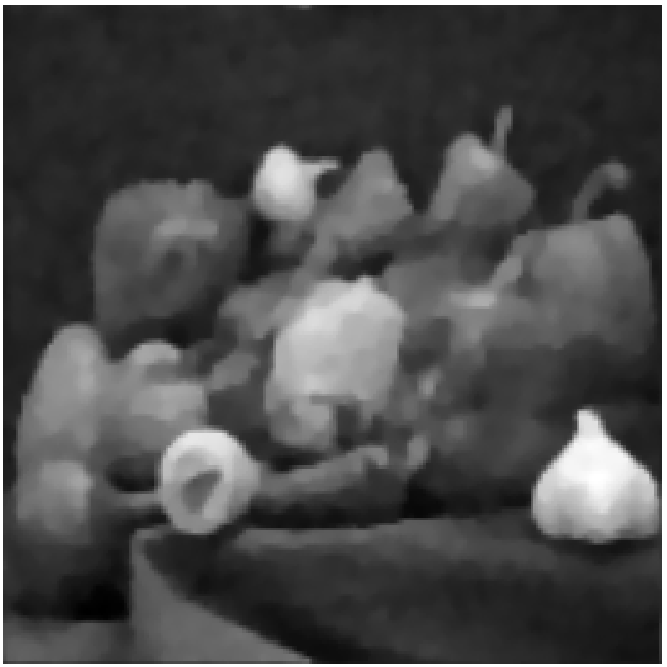}\\
    &\text{\small 27.9005}&\text{\small 27.7485}&\text{\small 27.6144}&\text{\small 28.8691}&\text{\small 29.6418} \\
    \end{tabular}
    \caption{Example 3: reconstructions obtained at different iterations with different methods.} 
    \label{fig:DefocusMedium_iterations}
\end{figure}

The comparison among the reconstructions obtained with NPD, PNPD, and their multilevel counterparts is reported in Figure~\ref{fig:DefocusMedium_iterations}, where the results after $4$, $15$, and $60$ iterations are shown. As in the previous examples, the multilevel preconditioned methods achieve the highest PSNR values among all the considered algorithms. In particular, after only $4$ iterations, MPNPD already provides a reconstruction of higher quality than the one obtained by standard PNPD after $15$ iterations.

\subsubsection{Example 4}
In this example, we consider a shake PSF of size $42\times 42$. Figure~\ref{fig:pep_ShakeSevere} displays the ground-truth image, the corresponding PSF, and the observed image.
\begin{figure}[!h]
    \centering
    \makebox[\textwidth][c]{
        \begin{subfigure}{0.25\textwidth}
            \centering
            \includegraphics[width=\textwidth]{Images/Peppers.eps}
            \caption{}
        \end{subfigure}
        \begin{subfigure}{0.25\textwidth}
            \centering
            \includegraphics[width=\textwidth]{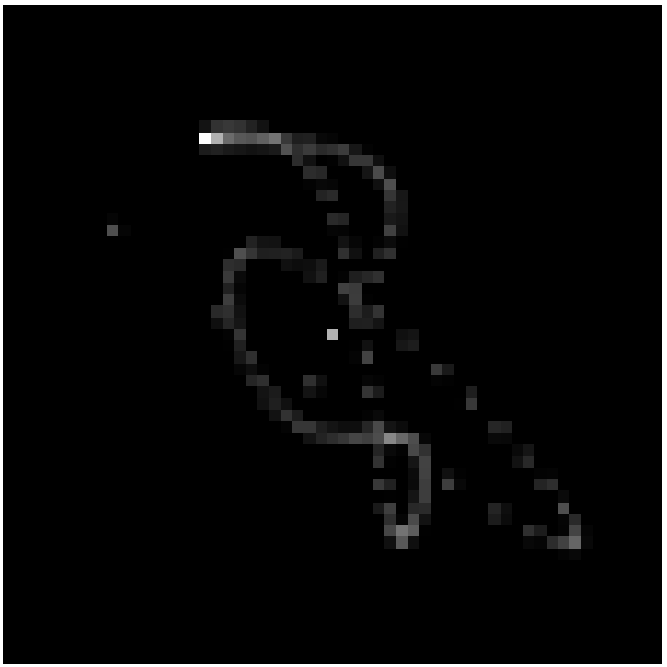}
            \caption{}\label{fig:psf_ShakeSevere}
        \end{subfigure}
        \begin{subfigure}{0.25\textwidth}
            \centering
            \includegraphics[width=\textwidth]{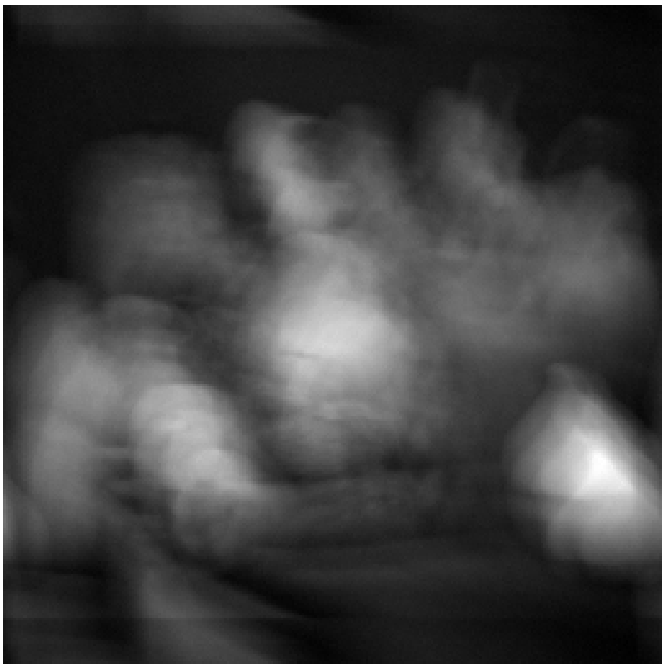}
            \caption{}\label{fig:b_ShakeSevere}
        \end{subfigure}
    }
    \caption{Example 4: (\subref{fig:pep_groundtruth}) true image, (\subref{fig:psf_ShakeSevere}) shake PSF of size $42\times 42$,  (\subref{fig:b_ShakeSevere}) observed image.}
    \label{fig:pep_ShakeSevere}
\end{figure}

The regularization parameter is chosen as $\lambda = 4 \cdot 10^{-4}$ for NPD and MNPD, and as $\lambda = 3 \cdot 10^{-3}$ for PNPD, MPNPD, and MPNPD\_auto.
In this example, the MPNPD\_auto algorithm performs $8$ coarse-level iterations for $7$ V-cycles, followed by $4$ additional coarse iterations for $2$ more V-cycles.

\begin{figure}
     \centering
    \begin{overpic}[width=.6\linewidth]{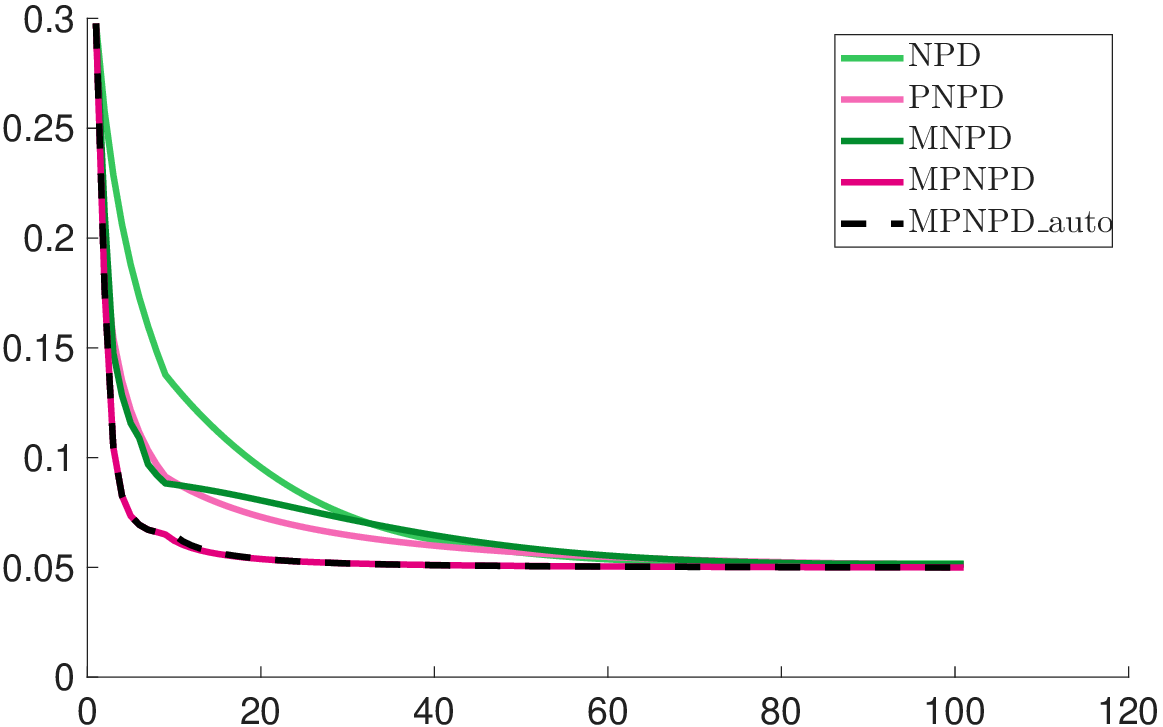}
        \put(32,-6){Number of iterations} 
        \put(-7,28){\rotatebox{90}{RRE}} 
    \end{overpic}
    \vspace{8mm}
     \caption{Example 4: comparison of the RRE varying the number of iterations for NPD, PNPD, and their multilevel versions.}
     \label{fig:err_pep_ShakeSevere}
\end{figure}

Figure~\ref{fig:err_pep_ShakeSevere} reports the comparison in terms of RRE for the considered methods. As in the previous examples, the plot confirms that the initial coarse-grid iterations performed by the multilevel methods provide a substantial acceleration of convergence. The same conclusions can be derived by looking at the restored images in
Figure~\ref{fig:ShakeSevere_iterations}.

\begin{figure}
    \centering
	\begin{tabular}{>{\centering\arraybackslash}m{0.01\textwidth}
    >{\centering\arraybackslash}m{0.16\textwidth}
    >{\centering\arraybackslash}m{0.16\textwidth}
    >{\centering\arraybackslash}m{0.16\textwidth}
    >{\centering\arraybackslash}m{0.16\textwidth}
    >{\centering\arraybackslash}m{0.16\textwidth}}
    & \textcolor{npd}{NPD} & \textcolor{pnpd}{PNPD} & \textcolor{mnpd}{MNPD} & \textcolor{mpnpd}{MPNPD} & \textcolor{black}{MPNPD\_auto}\\
    \begin{tikzpicture}
    \node at (0,0) {};
    \node at (0,0.050\textwidth) {\rotatebox{90}{\textbf{Iter: 4}}};
    \end{tikzpicture} & \includegraphics[width=\linewidth]{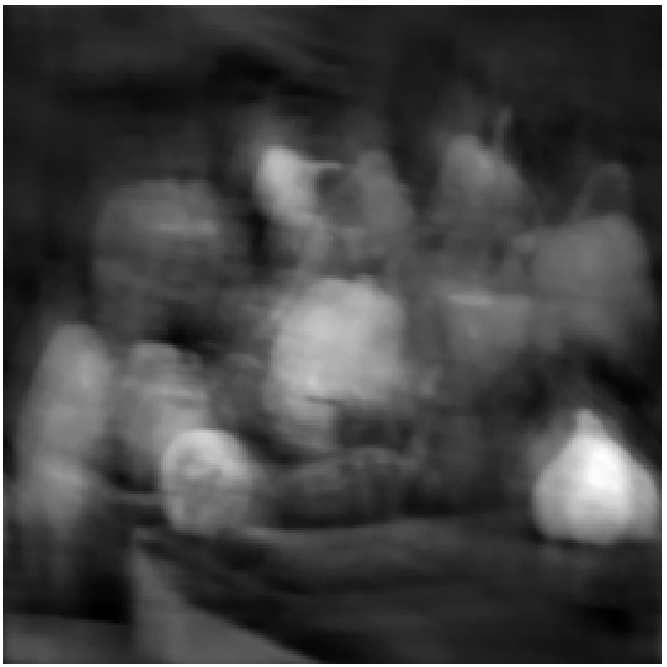}&
    \includegraphics[width=\linewidth]{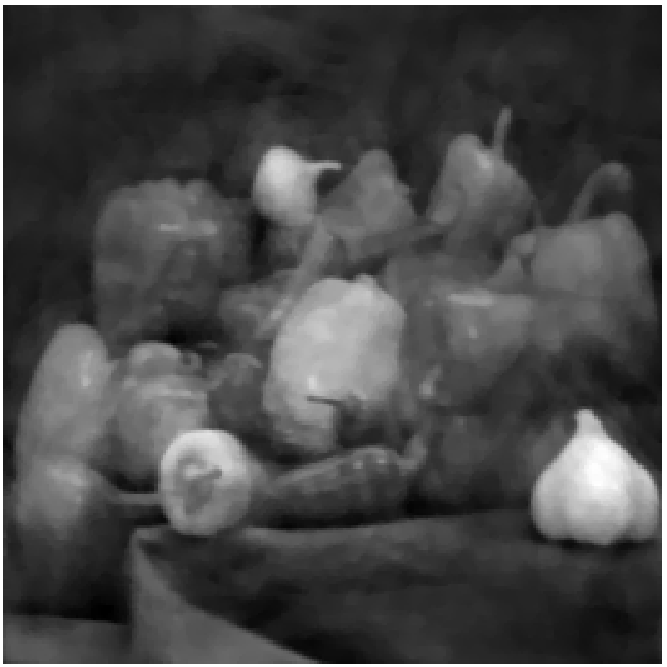} &
    \includegraphics[width=\linewidth]{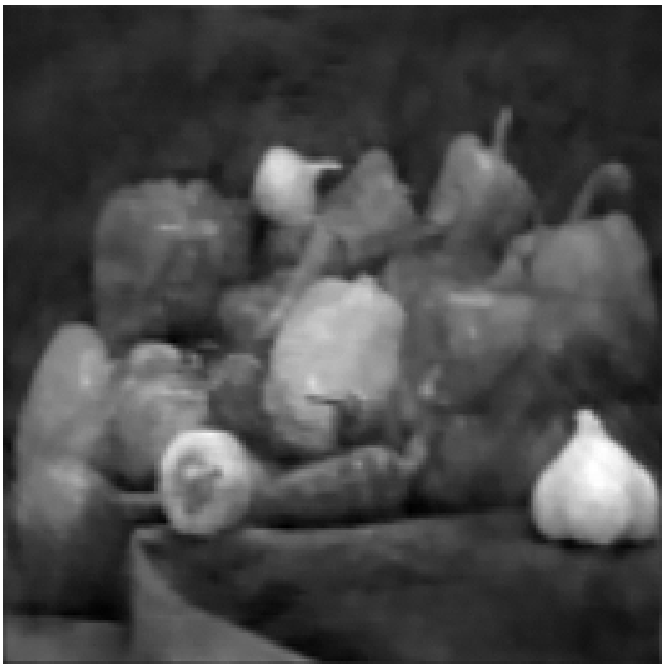}&
    \includegraphics[width=\linewidth]{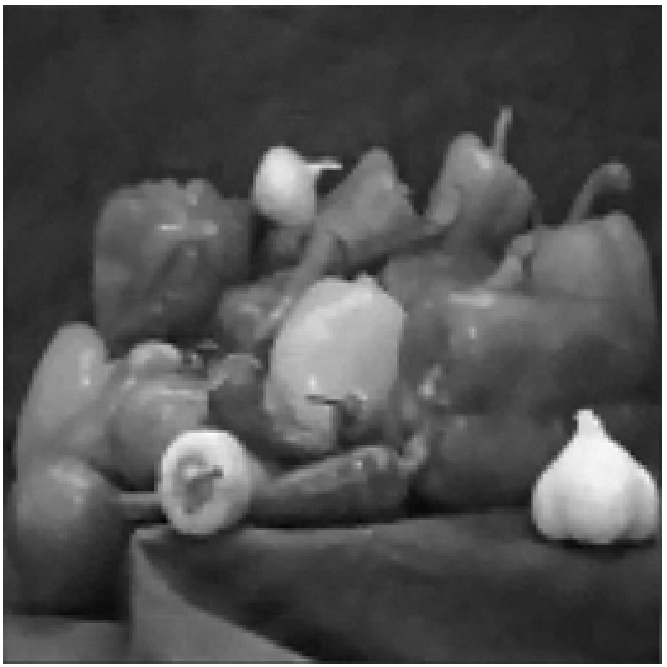} &
    \includegraphics[width=\linewidth]{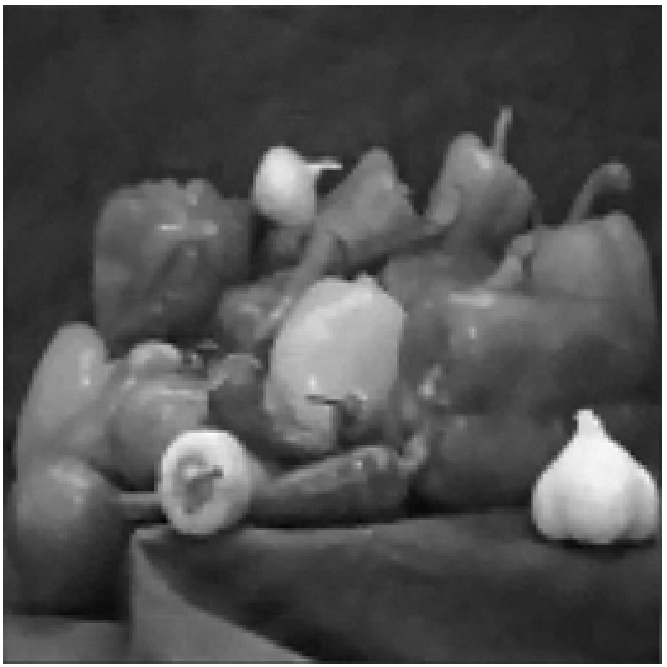}\\
    &\vspace*{-0.2cm}\text{\small 23.1924}&\vspace*{-0.2cm}\text{\small 26.9759}&\vspace*{-0.2cm}\text{\small 27.4100}&\vspace*{-0.2cm}\text{\small 31.35599}& \text{\small 31.3559}\\[0.2cm]
    \begin{tikzpicture}
    \node at (0,0) {};
    \node at (0,0.050\textwidth) {\rotatebox{90}{\textbf{Iter: 15}}};
    \end{tikzpicture}&\includegraphics[width=\linewidth]{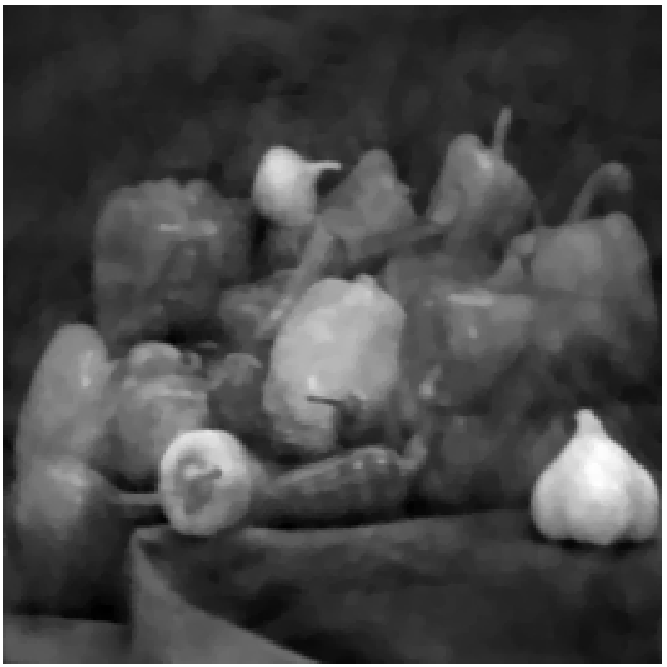}&
    \includegraphics[width=\linewidth]{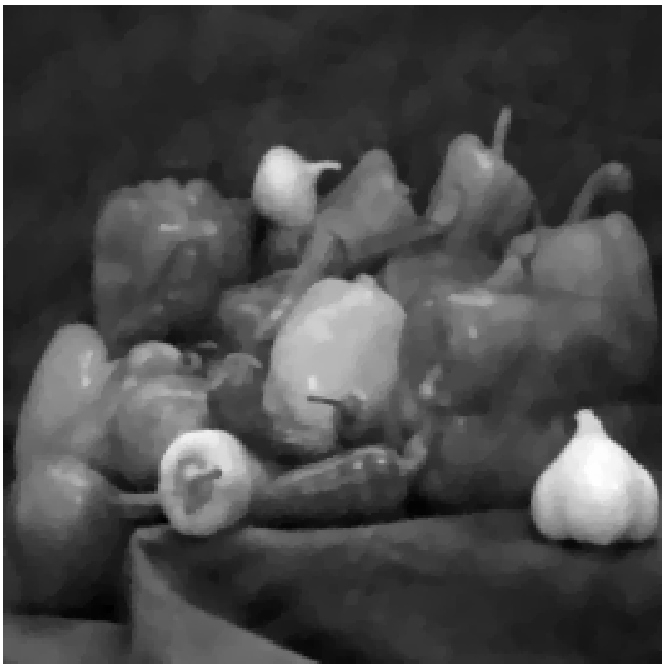} &
    \includegraphics[width=\linewidth]{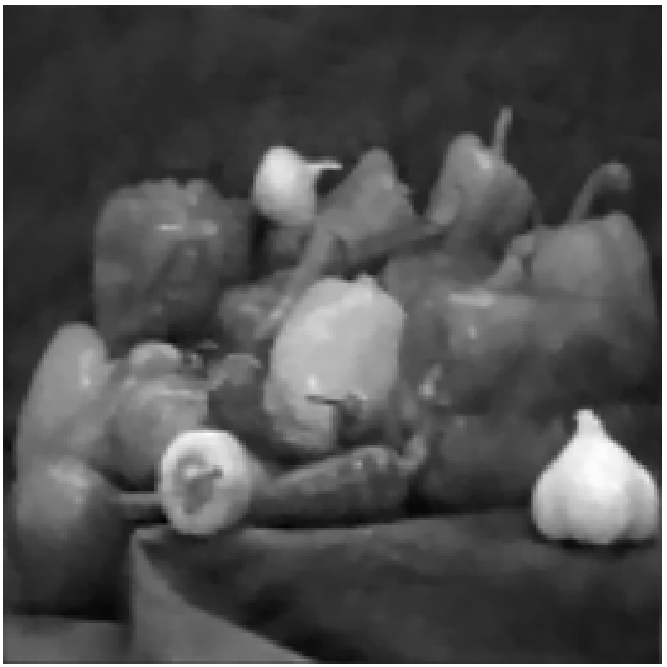}&
    \includegraphics[width=\linewidth]{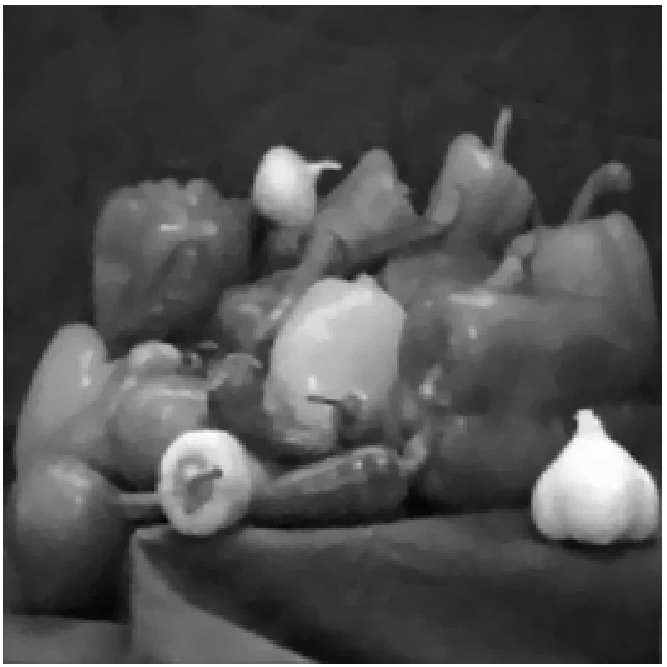}&
    \includegraphics[width=\linewidth]{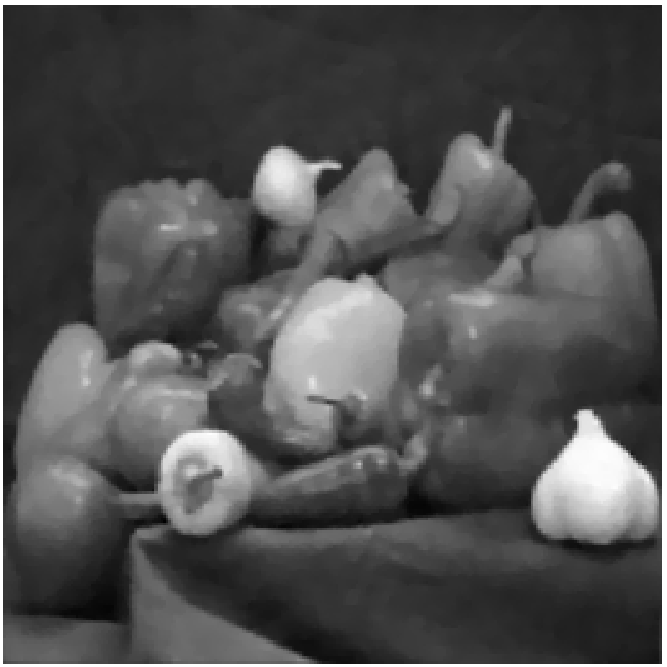}\\
    &\vspace*{-0.2cm}\text{\small 27.9786}&\vspace*{-0.2cm}\text{\small 30.8264}&\vspace*{-0.2cm}\text{\small 30.2138}&\vspace*{-0.2cm}\text{\small 33.7781}& \text{\small 33.7233}\\[0.2cm]
   \begin{tikzpicture}
    \node at (0,0) {};
    \node at (0,0.050\textwidth) {\rotatebox{90}{\textbf{Iter: 60}}};
    \end{tikzpicture}&\includegraphics[width=\linewidth]{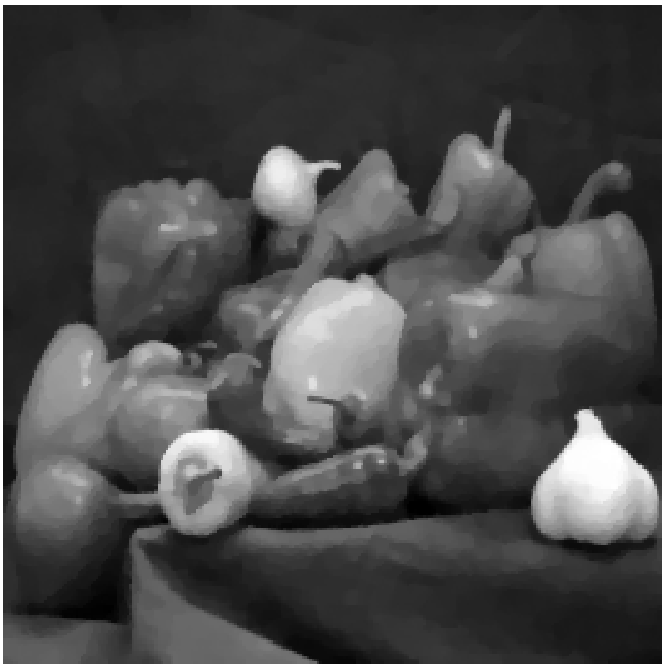}&
    \includegraphics[width=\linewidth]{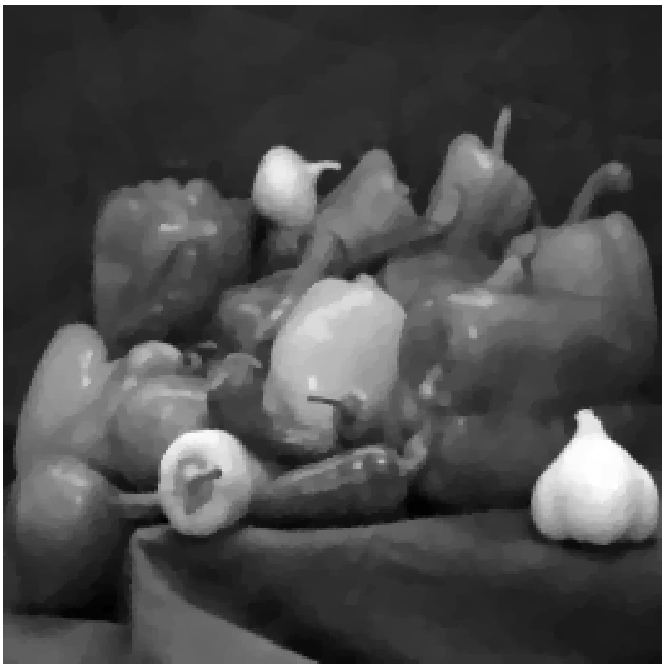} &
    \includegraphics[width=\linewidth]{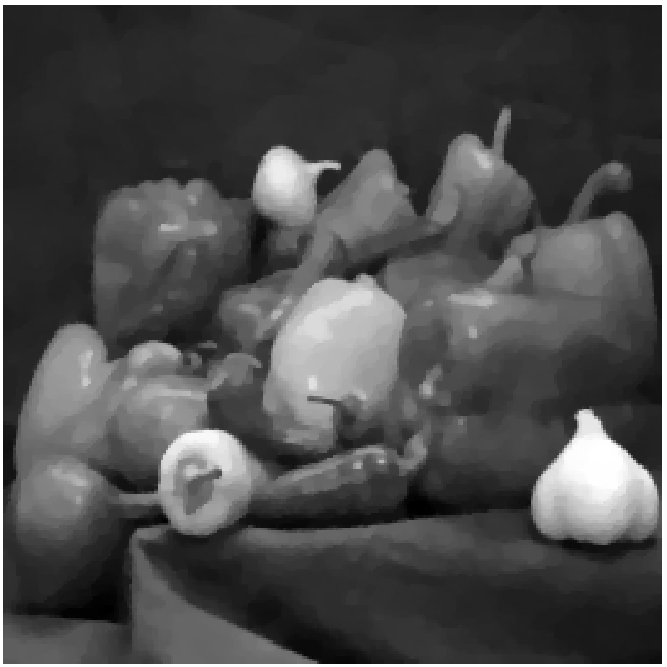}&
    \includegraphics[width=\linewidth]{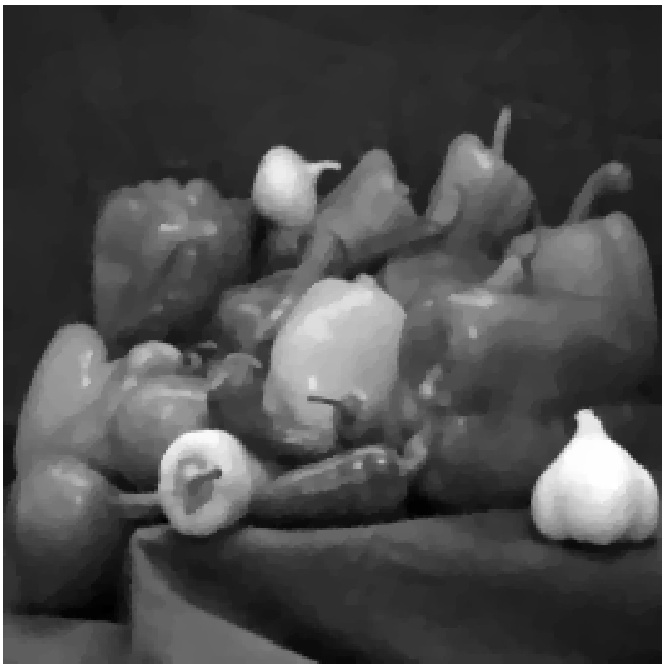}&
    \includegraphics[width=\linewidth]{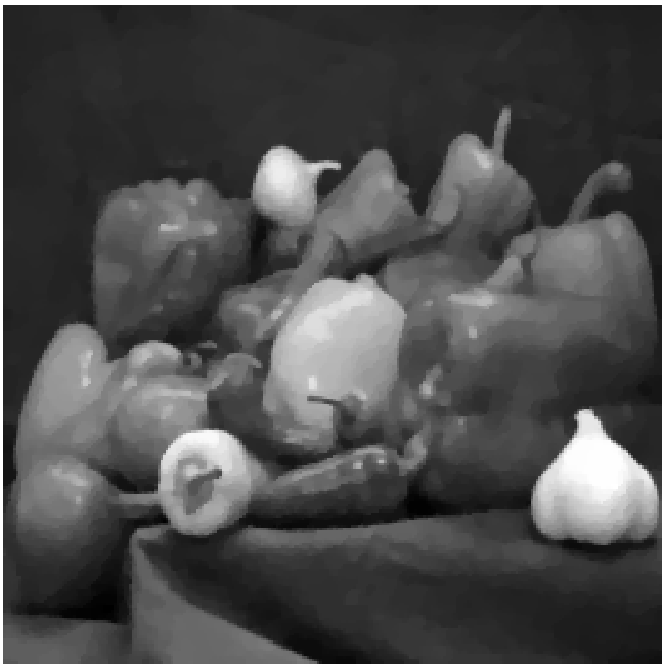}\\
    &\text{\small 34.1109}&\text{\small 33.9253}&\text{\small 33.8464}&\text{\small 34.6223}&\text{\small 34.6302} \\
    \end{tabular}
    \caption{Example 4: reconstructions obtained at different iterations with different methods.} 
    \label{fig:ShakeSevere_iterations}
\end{figure}

The main takeaway from these experiments is that the proposed methods are able to produce accurate approximations of the solution with only a small number of coarse-grid corrections and the aid of a suitable preconditioner, while remaining competitive with existing approaches.

\section{Conclusion} \label{sec:conc} 
Inspired by the IML FISTA method proposed in \cite{IML_FISTA}, we investigated the possibility of combining preconditioning strategies for proximal gradient methods with a multilevel framework.\
We proposed a convergent preconditioned multilevel framework for both exact and inexact proximal algorithms. Furthermore, we replaced condition \eqref{eq:decrease_smooth} with an Armijo condition in order to better control the corrections arising from the coarse level. Finally, we introduced an adaptive strategy for selecting the number of coarse iterations and V-cycles, based on the satisfaction of the Armijo condition. We tested our approach on several image deblurring problems involving observations corrupted by white Gaussian noise. 

The numerical experiments confirm the advantages of incorporating a preconditioner within a multilevel framework. Indeed, our methods perform very well across a wide range of point spread functions of different types and sizes. Specifically, the multilevel ITTA and PNPD algorithms provide high-quality reconstructions faster than their standard counterparts. This opens up promising opportunities for addressing large-scale problems. Furthermore, the automatic algorithm we propose allows us to reduce the number of parameters without compromising performance.

Future work will explore alternative choices for the preconditioning matrix, particularly for applications where our proposal may be computationally demanding to invert, such as in computed tomography. Moreover, investigating the link between coarse and fine-level optimization remains key to determining the most effective strategies for constructing lower-level models.

\section*{Declarations}

\begin{itemize}
\item Funding: The first three authors are members of the GNCS group of INdAM and are partially supported by the PRIN 2022 project “Inverse Problems in the Imaging Sciences (IPIS)” (2022ANC8HL) and by the INdAM-GNCS 2024 Project “Metodi strutturati per il signal processing avanzato” (CUP E53C25002010001).\\  The last author thanks the Swiss National Science Foundation for the support through the project “CardioTwin - Precision Cardiology based on Digital Twins” (Grant number 214817).
\item Conflict of interest: All authors certify that they have no affiliations with or involvement in any organization or entity with any financial interest or non-financial interest in the subject matter or materials discussed in this
manuscript. The authors have no Conflict of interest to declare that are relevant to the content of this article.
\item Ethics approval: The authors declare that research ethics approval was not required for this study.
\item Consent to participate: The authors declare that consent to participate was not required for this study.
\item Data availability: The test problems were generated using the IRtools toolbox \cite{ITtools}. The \texttt{peppers.png} test image is a standard dataset distributed with the MATLAB environment.
\item Code availability: The custom code generated during the current study is available from the corresponding author on reasonable request. 
\item Author contribution: The authors contributed equally to this work.
\end{itemize}

\bibliography{sn-bibliography}

@article{ISTA,
  title={An iterative thresholding algorithm for linear inverse problems with a sparsity constraint},
  author={Daubechies, Ingrid and Defrise, Michel and De Mol, Christine},
  journal={Communications on Pure and Applied Mathematics: A Journal Issued by the Courant Institute of Mathematical Sciences},
  volume={57},
  number={11},
  pages={1413--1457},
  year={2004},
  publisher={Wiley Online Library}
}

@article{FISTA,
  title={A fast iterative shrinkage-thresholding algorithm for linear inverse problems},
  author={Beck, Amir and Teboulle, Marc},
  journal={SIAM journal on imaging sciences},
  volume={2},
  number={1},
  pages={183--202},
  year={2009},
  publisher={SIAM}
}

@INPROCEEDINGS{MM_FISTA,
  author={Lauga, Guillaume and Riccietti, Elisa and Pustelnik, Nelly and Gonçalves, Paulo},
  booktitle={ICASSP 2023 - 2023 IEEE International Conference on Acoustics, Speech and Signal Processing (ICASSP)}, 
  title={Multilevel FISTA for Image Restoration}, 
  year={2023},
  volume={},
  number={},
  pages={1-5},
  doi={10.1109/ICASSP49357.2023.10094710}
}

@article{IML_FISTA,
author = {Lauga, Guillaume and Riccietti, Elisa and Pustelnik, Nelly and Gon\c{c}alves, Paulo},
title = {IML FISTA: A Multilevel Framework for Inexact and Inertial Forward-Backward. Application to Image Restoration},
journal = {SIAM Journal on Imaging Sciences},
volume = {17},
number = {3},
pages = {1347-1376},
year = {2024},
doi = {10.1137/23M1582345}
}

@article{NPD,
author = {Bonettini, S. and Prato, Marco and Rebegoldi, Simone},
year = {2022},
month = {08},
pages = {1-39},
title = {A nested primal–dual FISTA-like scheme for composite convex optimization problems},
volume = {84},
journal = {Computational Optimization and Applications},
doi = {10.1007/s10589-022-00410-x}
}

@article{NPDIT,
author = {Aleotti, Stefano and Bonettini, Silvia and Donatelli, Marco and Prato, Marco and Rebegoldi, Simone},
year = {2024},
month = {09},
pages = {357-395},
title = {A nested primal–dual iterated Tikhonov method for regularized convex optimization},
volume = {91},
journal = {Computational Optimization and Applications},
doi = {10.1007/s10589-024-00613-4}
}

@article{ITTA,
title = {Nonstationary iterated thresholding algorithms for image deblurring},
journal = {Inverse Problems and Imaging},
volume = {7},
number = {3},
pages = {717-736},
year = {2013},
issn = {1930-8337},
doi = {10.3934/ipi.2013.7.717},
url = {https://www.aimsciences.org/article/id/c4e14239-435b-4853-bda0-22909360edae},
author = {Jie Huang and Marco Donatelli and Raymond H. Chan}
}

@article{PNPD,
title = {A Preconditioned Version of a Nested Primal-Dual Algorithm for Image Deblurring},
author = {Aleotti, Stefano and Donatelli, Marco and Krause, Rolf and Scarlato, Giuseppe},
journal = {Journal of Scientific Computing},
volume = {103},
number = {3},
pages = {85},
year = {2025},
publisher = {Springer},
doi = {10.1007/s10915-025-02863-8},
}

@article{ITtools,
author = {Gazzola, Silvia and Hansen, Per Christian and Nagy, James},
year = {2019},
month = {07},
pages = {},
title = {IR Tools: A MATLAB Package of Iterative Regularization Methods and Large-Scale Test Problems},
volume = {81},
journal = {Numerical Algorithms},
doi = {10.1007/s11075-018-0570-7}
}

@book{rockafellar1997convex,
  address = {Princeton, N. J.},
  author = {Rockafellar, R. Tyrrell},
  callnumber = {UniM Maths 516.08 R59},
  description = {robotica-bib},
  notes = {A SRL reference.},
  publisher = {Princeton University Press},
  series = {Princeton Mathematical Series},
  title = {Convex analysis},
  year = 1970
}

@article{Moreau_fr,
     author = {Moreau, J.J.},
     title = {Proximit\'e et dualit\'e dans un espace hilbertien},
     journal = {Bulletin de la Soci\'et\'e Math\'ematique de France},
     pages = {273--299},
     year = {1965},
     publisher = {Soci\'et\'e math\'ematique de France},
     volume = {93},
     doi = {10.24033/bsmf.1625},
     mrnumber = {34 #1829},
     zbl = {0136.12101},
     language = {fr},
     url = {https://www.numdam.org/articles/10.24033/bsmf.1625/}
}

@book{engl1996regularization,
  title        = {Regularization of Inverse Problems},
  author       = {Engl, Heinz W. and Hanke, Martin and Neubauer, Andreas},
  series       = {Mathematics and Its Applications},
  volume       = {375},
  publisher    = {Kluwer Academic Publishers},
  address      = {Dordrecht, The Netherlands},
  year         = {1996},
  isbn         = {9780792341574},
  doi          = {10.1007/978-94-009-1740-8},
}

@article{Bach-2012,
	author    = {F. Bach and R. Jenatton and J. Mairal and G. Obozinski},
	title    	= {Structured Sparsity through Convex Optimization},
	year    	= {2012},
	journal   = {Stat. Sci.},
	pages    	= {450--468},
	volume    = {27},
	number		= {4}
}

@book{Bertero1998b,
	author    = {M. Bertero and P. Boccacci},
	title    	= {Introduction to inverse problems in imaging},
	year    	= {1998},
	publisher = {Institute of Physics Publishing},
	address		= {Bristol}
}

@book{HNO,
  title={Deblurring images: matrices, spectra, and filtering},
  author={Hansen, Per Christian and Nagy, James G and O'Leary, Dianne P},
  year={2006},
  publisher={SIAM},
  address={Philadelphia}
}

@book{Engl1996-yp,
  title     = {Regularization of Inverse Problems},
  author    = {Engl, Heinz W and Hanke, Martin and Neubauer, Gunther},
  publisher = {Springer},
  series    = {Mathematics and Its Applications},
  edition   = 1996,
  month     = jul,
  year      = 1996,
  address   = {Dordrecht, Netherlands},
  language  = {en}
}

@ARTICLE{Rudin-Osher-Fatemi-1992,
author={L.I. Rudin and S. Osher and E. Fatemi},
title={Nonlinear total variation based noise removal algorithms},
journal={J. Phys. D.},
year={1992},
volume={60},
number={1--4},
pages={259--268}
}

@article{Dossal2015stability,
author = {Aujol, J.-F and Dossal, Ch},
year = {2015},
month = {11},
pages = {2408-2433},
title = {Stability of Over-Relaxations for the Forward-Backward Algorithm, Application to FISTA},
volume = {25},
journal = {SIAM Journal on Optimization},
doi = {10.1137/140994964}
}

@article{Beck2012Smoothing,
author = {Beck, Amir and Teboulle, Marc},
title = {Smoothing and First Order Methods: A Unified Framework},
journal = {SIAM Journal on Optimization},
volume = {22},
number = {2},
pages = {557-580},
year = {2012},
doi = {10.1137/100818327},

}

@article{Polson-et-al-2015,
	author    = {N.G. Polson and J.G. Scott and B.T. Willard},
	title    	= {Proximal {A}lgorithms in {S}tatistics and
{M}achine {L}earning},
	year    	= {2015},
	journal   = {Stat. Sci.},
	pages    	= {559--581},
	volume    = {30},
	number		= {4}
}

@ARTICLE{Chambolle10,
  AUTHOR =       "A. Chambolle and T. Pock",
  TITLE =        "A First--order primal--dual algorithm for convex problems with applications to imaging",
   JOURNAL =    " J. Math. Imaging Vis.",
  YEAR =         "2011",
  volume =       "40",
  pages =        "120--145"
}

@ARTICLE{Malitsky-Pock-2018,
  AUTHOR =       {Y. Malitsky and T. Pock},
  TITLE =        {A {F}irst-{O}rder {P}rimal-{D}ual {A}lgorithm with {L}inesearch},
  JOURNAL =      {SIAM J. Optim.},
  YEAR =         {2018},
  volume =       {28},
  number=        {1},
  pages =        {411--432}
}

@ARTICLE{Chambolle-et-al-2024,
  AUTHOR =       {A. Chambolle and C. Delplancke and M. J. Ehrhardt and C.-B. Sch{\"o}nlieb and J. Tang},
  TITLE =        {Stochastic {P}rimal–{D}ual {H}ybrid {G}radient {A}lgorithm with {A}daptive {S}tep {S}izes},
  JOURNAL =      {J. Math. Imaging Vis.},
  YEAR =         {2024},
  volume =       {66},
  pages =        {294--313}
}

@article{Bonettini-et-al-2016a,
	author    = {S. Bonettini and F. Porta and V. Ruggiero},
	title    	= {A variable metric forward-backward method with extrapolation},
	year    	= {2016},
	journal   = {SIAM J. Sci. Comput.},
        volume = {38},
        pages = {A2558--A2584}
}

@article{Schmidt2011,
	author    = {M. Schmidt and N. Le Roux and F. Bach},
	title    	= {Convergence rates of inexact proximal-gradient methods for convex optimization},
	year    	= {2011},
	journal   = {arXiv:1109.2415v2}
}

@ARTICLE{Villa-etal-2013,
author={Villa, S. and Salzo, S. and Baldassarre, L. and Verri, A.},
title={Accelerated and inexact forward-backward algorithms},
journal={SIAM J. Optim.},
year={2013},
volume={23},
number={3},
pages={1607-1633}
}

@article{Bonettini2018a,
   author = {S. Bonettini and S. Rebegoldi and V. Ruggiero},
    title = {Inertial Variable Metric Techniques for the Inexact Forward--Backward Algorithm},
  journal = {SIAM J. Sci. Comput.},
volume = {40},
number = {5},
   pages   = {A3180--A3210},
     year = 2018
}

@article{Chen-et-al-2018,
	author    = {J. Chen and I. Loris},
	title    	= {On starting and stopping criteria for nested primal-dual iterations},
	year    	= {2019},
	journal   = {Numer. Algorithms},
        volume = {82},
        pages = {605--621}
}

@article{Chouzenoux-etal-2014,
	author    = {E. Chouzenoux and J.-C. Pesquet and A. Repetti},
	title    	= {Variable metric forward-backward algorithm for minimizing the
sum of a differentiable function and a convex function},
	year    	= {2014},
	journal   = {J. Optim. Theory Appl.},
	pages    	= {107--132},
	volume    = {162}
}

@ARTICLE{Frankel-etal-2015,
	author    = {P. Frankel and G. Garrigos and J. Peypouquet},
	title    	= {Splitting methods with variable metric for {{K}urdyka--{\L}ojasiewicz} functions and general convergence rates},
	year    	= {2015},
	journal   = {J. Optim. Theory Appl.},
	pages    	= {874--900},
	volume    = {165},
	number		= {3}
}

@article{Ghanbari-2018,
	author    = {H. Ghanbari and K. Scheinberg},
	title    	= {Proximal quasi-{N}ewton methods for regularized convex optimization with linear and accelerated sublinear convergence rates},
	year    	= {2018},
	journal   = {Comput. Optim. Appl.},
	pages    	= {597--627},
	volume    = {69}
}

@article{Lee2018,
	author    = {C. Lee and S. J. Wright},
	title    	= {Inexact Successive Quadratic Approximation for Regularized Optimization},
	year    	= {2019},
	journal   = {Comput. Optim. Appl.},
	pages    	= {641--674},
        volume  = {72}
}

@article{Beck-Teboulle-2009a,
      author={Beck, A  and Teboulle, M},
      title = {Fast gradient-based algorithms for constrained total variation image denoising and deblurring problems},
      journal = {IEEE Trans. Image Processing},
      year ={2009},
      volume = {18},
      number={11},
      pages ={2419--34}
}

@ARTICLE{Ochs-etal-2014,
  author    = {P. Ochs and Y. Chen and T. Brox and T. Pock},
  title     = {i{P}iano: Inertial Proximal Algorithm for Non-convex Optimization},
  journal   = {SIAM J. Imaging Sci.},
  year      = {2014},
  volume    = {7},
  number    = {2},
  pages     = {1388--1419}
}

@ARTICLE{Combettes-Wajs-2005,
  AUTHOR =       {P.L. Combettes and V.~R. Wajs},
  TITLE =        {Signal recovery by proximal forward-backward splitting},
  JOURNAL =      {Multiscale Model. Simul.},
  YEAR =         {2005},
  volume =       {4},
  number=        {4},
  pages =        {1168--1200}
}

@article{Daubechies-et-al-2004,
	author    = {I. Daubechies and M. Defrise and C. De Mol},
	title    	= {An iterative thresholding algorithm for linear inverse problems with a sparsity constraint},
	year    	= {2004},
	journal   = {Commun. Pure Appl. Math.},
	pages    	= {1413--1457},
	volume    = {57},
	number		= {11}
}

@article{cai2008framelet,
  title={A framelet-based image inpainting algorithm},
  author={Cai, Jian-Feng and Chan, Raymond H and Shen, Zuowei},
  journal={Applied and Computational Harmonic Analysis},
  volume={24},
  number={2},
  pages={131--149},
  year={2008},
  publisher={Elsevier}
}

@article{buccini2020multigrid,
  author  = {Alessandro Buccini and Marco Donatelli},
  title   = {A multigrid frame based method for image deblurring},
  journal = {Electron. Trans. Numer. Anal.},
  volume  = {53},
  year    = {2020},
  pages   = {283--312},
  doi     = {10.1553/etna_vol53s283},
}

@article{chan2010multilevel,
  title={A multilevel algorithm for simultaneously denoising and deblurring images},
  author={Chan, Raymond H and Chen, Ke},
  journal={SIAM Journal on Scientific Computing},
  volume={32},
  number={2},
  pages={1043--1063},
  year={2010},
  publisher={SIAM}
}

@article{donatelli2005multigrid,
  title={A multigrid for image deblurring with Tikhonov regularization},
  author={Donatelli, Marco},
  journal={Numerical linear algebra with applications},
  volume={12},
  number={8},
  pages={715--729},
  year={2005},
  publisher={Wiley Online Library}
}

@article{donatelli2006regularizing,
  title={On the regularizing power of multigrid-type algorithms},
  author={Donatelli, Marco and Serra-Capizzano, Stefano},
  journal={SIAM Journal on Scientific Computing},
  volume={27},
  number={6},
  pages={2053--2076},
  year={2006},
  publisher={SIAM}
}

@article{morigi2008cascadic,
  title={Cascadic multiresolution methods for image deblurring},
  author={Morigi, Serena and Reichel, Lothar and Sgallari, Fiorella and Shyshkov, Andriy},
  journal={SIAM Journal on Imaging Sciences},
  volume={1},
  number={1},
  pages={51--74},
  year={2008},
  publisher={SIAM}
}
\appendix
\section{Framelets}\label{app:framelets}
Let $W$ denote the tight frame associated with linear B-splines \cite{cai2008framelet}. The one-dimensional linear B-spline system consists of a low-pass filter $W_0$ and two high-pass filters $W_1$ and $W_2$. The corresponding masks are \[ w^{(0)} = \tfrac{1}{4}(1,\,2,\,1), \qquad w^{(1)} = \tfrac{\sqrt{2}}{4}(-1,\,0,\,1), \qquad w^{(2)} = \tfrac{1}{4}(-1,\,2,\,-1). \] Imposing reflexive boundary conditions, the resulting filter matrices are 
\[ W_0 = \frac{1}{4} \begin{bmatrix} 3 & 1 & 0 & \cdots & 0 \\ 1 & 2 & 1 & \cdots & 0 \\ \vdots & \vdots & \ddots & \vdots & \vdots \\ 0 & \cdots & 1 & 2 & 1 \\ 0 & \cdots & 0 & 1 & 3 \end{bmatrix}, \qquad W_1 = \frac{\sqrt{2}}{4} \begin{bmatrix} -1 & 1 & 0 & \cdots & 0 \\ -1 & 0 & 1 & \cdots & 0 \\ \vdots & \vdots & \ddots & \vdots & \vdots \\ 0 & \cdots & -1 & 0 & 1 \\ 0 & \cdots & 0 & -1 & 1 \end{bmatrix}, \] 
\[ W_2 = \frac{1}{4} \begin{bmatrix} 1 & -1 & 0 & \cdots & 0 \\ -1 & 2 & -1 & \cdots & 0 \\ \vdots & \vdots & \ddots & \vdots & \vdots \\ 0 & \cdots & -1 & 2 & -1 \\ 0 & \cdots & 0 & -1 & 1 \end{bmatrix}, \] 
and 
\begin{equation} \label{eq:framelets} 
W =\begin{bmatrix} W_0\\ W_1\\ W_2 \end{bmatrix}.
\end{equation} 
The operators $W_i$, $i=0,1,2$, are designed for one-dimensional signals.\\ 
The two-dimensional framelet filters are obtained via tensor products as 
\begin{equation*} W_{ij} = W_i \otimes W_j , \qquad i,j = 0,1,2, \end{equation*} 
and the corresponding 2D operator is 
\begin{equation} \label{eq:framelets_2D} 
W_{2D} = (W \otimes W) = \begin{bmatrix} W_{00}\\ W_{01}\\ \vdots\\ W_{22} \end{bmatrix}. \end{equation} 
The low-pass component is $W_{00}$, while all remaining matrices $W_{ij}$ contain at least one high-pass filter in at least one spatial direction.

\section{Computational cost}\label{app:computational_cost}
For simplicity, we set $d = n^2$, allowing us to represent a square image $X$ of size $n \times n$ at the fine level and an image of size $\frac{n}{2}\times \frac{n}{2}$ at the coarse level. Accordingly, the blurring matrix $A$ has size $n^2\times n^2$ at the fine level and $\frac{n^2}{4}\times \frac{n^2}{4}$ at the coarse level. The computational cost of one cycle of the multilevel scheme is given by
\begin{align*}
\mathcal{C}(CYCLE(n,m)) &= \mathcal{C}\left(SOLVER\left(\tfrac{n}{2}, \tfrac{n}{2}, m\right)\right) + \mathcal{P},
\end{align*}
where $\mathcal{C}\left(SOLVER\left(\frac{n}{2}, \frac{n}{2}, m\right)\right)$ denotes the number of flops for performing $m$ iterations of the solver at the coarse level, while $\mathcal{P}$ accounts for the cost of transferring information between fine and coarse levels.

\begin{itemize}
    \item \textbf{Transfer operators} \\
Each row of the restriction operator $I_h^H$ contains $9$ nonzero entries. Therefore, each row requires $8$ additions, $5$ multiplications, and $1$ division. For the prolongation operator $I_H^h$, one quarter of the fine-grid points coincide with coarse-grid points and are obtained by direct injection, requiring no arithmetic operations. For half of the points in the fine grid, which lie between two coarse grid nodes, we compute the weighted averages of $2$ neighbors, requiring $2$ operations per point. The remaining points lie in the center of a coarse grid cell, so they are computed as averages of $4$ neighbors, requiring $4$ operations.
Therefore, it holds
\[\mathcal{C}(I_h^Hx) = 14\frac{n^2}{4} = \frac{7}{2}n^2, \qquad \mathcal{C}(I_H^hx) = 0 + 4 \frac{n^2}{4} + 2 \frac{n^2}{2} = 2n^2, \]
obtaining
\[\mathcal{C}(I_H^hx) + \mathcal{C}(I_h^Hx) = 2n^2+\frac{7}{2}n^2 = \frac{11}{2}n^2, \]
and
\[\mathcal{P} = 2(\mathcal{C}(I_H^hx) + \mathcal{C}(I_h^Hx)) =11n^2,\]
since both the current iterate and the gradient of the objective function at the current iterate must be transferred.
\end{itemize}

Since the most expensive operations in all the considered methods are FFTs and multiplications by framelet operators, the cost analysis mainly focuses on these two components.

\begin{itemize}
    \item \textbf{Fast Fourier transform} \\
The cost of a one-dimensional FFT of a vector $x\in \mathbb{R}^n$ is
\[\mathcal{C}(FFT(n))= 5n\log_2 n.\]
Therefore, applying the two-dimensional FFT to a matrix $X\in \mathbb{R}^{n\times n}$ requires
\[\mathcal{C}(FFT2(n,n))= n\Big(5n\log_2n\Big)+n\Big(5n\log_2 n\Big)=10n^2\log_2n,\]
since one-dimensional FFT is applied to each row and column.\\
\item \textbf{Framelet multiplications} \\
The framelet operator $W\in \mathbb{R}^{3n\times n}$ is a sparse matrix defined in~\eqref{eq:framelets}, and the corresponding two-dimensional operator is defined in~\eqref{eq:framelets_2D}. By standard properties of the Kronecker product,
\begin{align*}
    W_{2D} x = (W \otimes W) x=\text{vec}(WXW^T), \qquad
    W_{2D}^Ty = (W^T \otimes W^T)y =\text{vec}(W^TYW),
\end{align*}
where $x=\text{vec}(X)$ and $y=\text{vec}(Y)$. Since the number of operations required to compute $WXW^T$ and $W^TYW$ is the same, it is sufficient to estimate the cost of $WXW^T$. Exploiting the structure of the filters $W_0$, $W_1$, and $W_2$, a direct count of the arithmetic operations yields
\begin{align*}
    \mathcal{C}(WX, n) = n(4n + 3n + 4n) = 11n^2, \qquad
    \mathcal{C}(YW^T, n) = 3n(4n + 3n + 4n) = 33n^2.
\end{align*}
In conclusion, we have
\[\mathcal{C}(W_{2D}x, n) =  \mathcal{C}(WX, n) + \mathcal{C}(YW^T, n)  = 44n^2.\]

\item \textbf{Solvers} \\
Under periodic boundary conditions, the matrix $A$ is block circulant with circulant blocks (BCCB) and is therefore diagonalized by the two-dimensional Fourier matrix, namely $A=F_2^H\Lambda F_2$, where $F_2=F_n\otimes F_n$ and $\Lambda$ is a diagonal matrix containing the eigenvalues of $A$. As a consequence, the eigenvalues of $A$ can be obtained by computing the FFT of its first column.

Exploiting this structure, the gradient $\nabla f = A^T(Ax-b)$ can be computed using two two-dimensional FFTs. The same cost applies to the computation of $P^{-1}A^T(Ax-b)$. Hence,
\begin{align*}
\mathcal{C}(FISTA(n, n,  m)) &= m(2\mathcal{C}(FFT2(n,n)) + k_Fn^2+ 2\mathcal{C}(W_{2D}x, n)) \\
&= m(20n^2\log_2n+k_Fn^2+88n^2)\\
\mathcal{C}(ISTA(n, n,  m)) &= m(2\mathcal{C}(FFT2(n,n)) + k_In^2+ 2\mathcal{C}(W_{2D}x, n)) \\
&= m(20n^2\log_2n+k_In^2+88n^2)\\
\mathcal{C}(NPD(n, n,  m)) &= m(2\mathcal{C}(FFT2(n,n)) + k_Nn^2+ 3\mathcal{C}(W_{2D}x, n)) \\
&= m(20n^2\log_2n+k_Nn^2+132n^2)\\
\mathcal{C}(PNPD(n, n,  m)) &= m(2\mathcal{C}(FFT2(n,n)) + k_Pn^2+ 3\mathcal{C}(W_{2D}x, n)) \\
&= m(20n^2\log_2n+k_Pn^2+132n^2)
\end{align*}
where $k_i$, for $i=F,I,N,P$, collects the additional element-wise operations required by each algorithm, and $m$ denotes the number of iterations. For $m$ coarse-level iterations, it hold
\begin{align*}
\mathcal{C}\left(SOLVER(\tfrac{n}{2}, \tfrac{n}{2}, m)\right) &= m\left(20\frac{n^2}{4}\log_2\frac{n}{2}+k_1\frac{n^2}{4}+k_2\frac{n^2}{4}\right)\\
&=\frac{m}{4}\left(20n^2\log_2\frac{n}{2}+k_1n^2+k_2n^2\right)\\
&=\frac{m}{4}\left(20n^2\log_2n-20n^2+k_1n^2+k_2n^2\right)\\
&=\frac{1}{4} \mathcal{C}\left(SOLVER(n, n,  m)\right)-5n^2m.
\end{align*}
\end{itemize}

Combining the previous estimates, the computational cost of one multilevel cycle is
\begin{align*}
\mathcal{C}(CYCLE(n,m)) &= \mathcal{C}\Big(SOLVER(\tfrac{n}{2}, \tfrac{n}{2}, m)\Big) + \mathcal{P},\\
&=\frac{1}{4} \mathcal{C}\left(SOLVER(n, n,  m)\right)-5n^2m+9n^2\\
& \approx  \frac{1}{4} \mathcal{C}(SOLVER(n, n,  m)).
\end{align*}
In particular, $\mathcal{C}(CYCLE(n,m))<\frac{1}{4} \mathcal{C}(SOLVER(n, n,  m))$ whenever $m\geq 3$.

\end{document}